\documentclass[12pt]{amsart}
\usepackage{graphicx}
\usepackage{amssymb}
\usepackage{amsmath}
\usepackage{amsfonts}
\usepackage[new]{old-arrows}
\usepackage{geometry}
\usepackage{comment}
\usepackage{hyperref}
\usepackage{cleveref}
\usepackage{thmtools} 
\usepackage{thm-restate}
\usepackage{cancel}
\usepackage{enumitem}
\usepackage{fullpage}

\usepackage[T1]{fontenc}
\usepackage[utf8]{inputenc}

\newtheorem{thm}{Theorem}[section]

\newtheorem{question}[thm]{Question}

\newtheorem{example}[thm]{Example}

\newtheorem{remark}[thm]{Remark}
\newtheorem{lemma}[thm]{Lemma}

\newtheorem{prop}[thm]{Proposition}
\newtheorem{definition}[thm]{Definition}

\newenvironment{inequality}
  {\crefalias{equation}{inequality}\begin{equation}}
  {\end{equation}\ignorespacesafterend}
\crefname{inequality}{inequality}{inequalities}
\Crefname{inequality}{Inequality}{Inequalities}

\newcommand{\Z}{\mathbb{Z}}
\newcommand{\N}{\mathbb{N}}
\newcommand{\C}{\mathbb{C}}
\newcommand{\Q}{\mathbb{Q}}

\newcommand{\Wh}{\mathrm{SC}}
\newcommand{\Out}{\mathrm{Out}}
\newcommand{\Mod}{\mathrm{Mod}}
\newcommand{\Hom}{\mathrm{H}}
\newcommand{\W}{\mathrm{HSC}}
\newcommand{\Csep}{\mathcal{C}^{\mathrm{sep}}}
\newcommand{\rose}{\mathbf{R}}
\newcommand{\free}{\mathbf{F}}
\newcommand{\sep}{^{\mathrm{sep}}}
\newcommand{\ffc}{\mathrm{FF}}
\newcommand{\Aut}{\mathrm{Aut}}
\newcommand{\lab}{\mathrm{lab}}
\newcommand{\pathwh}{\mathcal{P}\mathrm{SC}}
\newcommand{\Cay}{\mathrm{Cay}}

\newcommand{\seplength}[1]{||#1||_{\mathrm{sep}}}
\newcommand{\dsep}[2]{d_{\mathrm{sep}}\Big(#1, #2\Big)}
\newcommand{\pant}{^{\mathrm{pant}}}
\newcommand{\nf}{^{\mathrm{nf}}}
\newcommand{\scc}{^{\mathrm{scc}}}
\newcommand{\prim}{^{\mathrm{prim}}}
\newcommand{\Inn}{\mathrm{Inn}}
\newcommand{\fbc}{\mathrm{FB}}

\newcommand{\curves}{\mathcal{C}}
\newcommand{\area}{\mathrm{area}}
\newcommand{\words}{\mathcal{W}}

\newcommand{\inv}[1]{#1^{-1}}

\title[Separable homology of graphs and the separability complex]{Separable homology of graphs and \\ the separability complex}
\author{Becky Eastham}

\begin{document}

\newpage
\maketitle

\begin{abstract}
   We introduce the \emph{separability complex}, a one-complex associated to a finite regular cover of the rose and show that it is connected if and only if the fundamental group of the associated cover is generated by its intersection with the set of elements in proper free factors of $\free_n$. The separability complex admits an action of $\Out(\free_n)$ by isometries if the associated cover corresponds to a characteristic subgroup of $\free_n$. We prove that the separability complex of the rose has infinite diameter and is nonhyperbolic, implying it is not quasi-isometric to the free splitting complex or the free factor complex. As a consequence, we obtain that the Cayley graph of $\free_n$ with generating set consisting of all primitive elements of $\free_n$ is nonhyperbolic.  
\end{abstract}  

\section{Introduction}

Let $\Sigma=\Sigma_{g, n}$ be an oriented surface of finite type, $G$ an abelian group, and $\mathcal{C}$ a collection of closed curves on $\Sigma$. For a curve $\gamma\subset\Sigma$, we denote by $[\gamma]$ the associated element of $\Hom_1(\Sigma; G)$. For a cover $\pi: \widetilde{\Sigma}\rightarrow \Sigma$, let $\mathrm{H}_1^{\mathcal{C}}(\widetilde{\Sigma}; G)$ be the subspace of $\mathrm{H}_1(\widetilde{\Sigma}; G)$ spanned by the following set:
\[\{[\gamma]\in \mathrm{H}_1(\widetilde{\Sigma}; G) \mid \gamma \text{ is a closed curve in }\widetilde\Sigma, \pi(\gamma)\in\curves\}.\] 

There has been significant recent interest (see \cite{Koberda_2016}, \cite{farb2016finite}, \cite{malestein2019simple}, \cite{lee2020graph}, \cite{boggi2023generating}, \cite{klukowski2023simple}) in questions of the following form: for a collection of curves $\mathcal{C}\subset \pi_1(\Sigma)$, does $\Hom_1^{\mathcal{C}}(\widetilde{\Sigma}; G)= \Hom_1(\widetilde{\Sigma}; G)$ for all finite regular covers $\widetilde{\Sigma}$ of $\Sigma?$ Since the fundamental group of a punctured surface is free, there are analogous questions regarding the homology of finite regular covers of graphs. The primary focus of this paper is the following closely related question.

\begin{question}\label{question:main question}
    Let $\free_n$ be the free group on $n$ generators, $N$ a finite-index normal subgroup of $\free_n$, and $\Csep$ the set of elements of $\free_n$ contained in proper free factors of $\free_n$.  Is $N$ always generated by $\Csep\cap N$?
\end{question}

\noindent Throughout the paper, we identify $\pi_1(\rose_n)$ with $\free_n$. Since finite regular covers of the rose correspond to finite-index normal subgroups of $\free_n$, the question above is equivalent to the following question: is $\pi_1(\Gamma)$ generated by $\pi_1(\Gamma)\cap\Csep$ for every finite regular cover $\Gamma\twoheadrightarrow\rose_n$? 

A positive answer to \Cref{question:main question} would imply that the integral homology of a finite regular cover of $\rose_n$ is generated by $\Csep,$ as $\Hom_1(\Gamma; \Z)$ is the abelianization of $\pi_1(\Gamma)$. It would also imply that $\Hom_1\sep(\widetilde{\Sigma}; \Z) = \Hom_1(\widetilde{\Sigma}; \Z)$ for $\widetilde{\Sigma}$ a finite regular cover of a punctured surface $\Sigma$. Importantly, \Cref{question:main question} is related to the congruence subgroup problem for mapping class groups. For a statement and history of the congruence subgroup problem, see Chapter 4 of \cite{farb2006problems}. Boggi \cite{boggi2006profinite, Kent2016} has reduced the congruence subgroup problem for the mapping class group of $\Sigma_{g, n}$ to simple--connectivity of a certain \textit{procongruence curve complex} associated to $\Sigma_{g, n}$. Using her work in \cite{kent2009trees}, Kent can show that, if $\pi_1(\widetilde \Sigma_{0, 5})$ is generated by the nonfilling elements of $\pi_1(\Sigma_{0, 5})$, then the procongruence curve complex of $\Sigma_{2,0}$ is simply--connected, giving a conditional solution to the congruence subgroup problem in genus two \cite{Kent2023} \footnote{This has been announced by Boggi \cite{Boggi-announced}, but the complete proof has not appeared.}. 
The set $\Csep$ properly contains the set of nonfilling curves on a punctured surface, so a positive answer to \Cref{question:main question} does not imply the congruence subgroup problem for the mapping class group of the closed surface of genus two. However, it would be a partial result toward this goal.  

We provide a reformulation of \Cref{question:main question} in terms of the connectivity of spaces $\Wh(\Gamma)$ naturally associated to finite regular covers $\Gamma \rightarrow \rose_n$. 

\begin{restatable}{prop}{equivalentcondition}
\label{thm: equiv-connectivity}
    Given a finite-index normal subgroup $N$ of $\free_n$, let $\Gamma$ be the finite regular cover of $\rose_n$ corresponding to $N.$ Then $\Wh(\Gamma)$ is connected if and only if  $N$ is generated by $N\cap\Csep$.
\end{restatable}

\noindent Thus, if $\Wh(\Gamma)$ is connected for all finite regular covers $\Gamma,$ the answer to \Cref{question:main question} is \emph{yes}. Note that we are identifying the fundamental group of $\Gamma$ with its image inside $\free_n$. If $\Gamma$ represents a characteristic subgroup of $\free_n$, the complex $\Wh(\Gamma)$ admits a natural action of $\Out(\free_n)$ by isometries (see \Cref{theorem:outFn-action-char}). Each $\Wh(\Gamma)$ is contained in $\Wh(\rose_n)$, which is connected and admits an isometric action of $\Out(\free_n)$ (see \Cref{theorem:isometric-action}). Much of this paper discusses the topological and coarse geometric properties of $\Wh(\Gamma)$.

\begin{restatable}{thm}{infinitediameter}\label{thm:infinite-diameter}
Let $\Gamma$ be a finite regular cover of $\rose_n$. The diameter of every component of $\Wh(\Gamma)$ is infinite.
\end{restatable}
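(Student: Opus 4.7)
The plan is to construct a $1$-Lipschitz function on $\Wh(\Gamma)$ that is unbounded on every component, using the ambient complex $\Wh(\rose_n)$ as a yardstick. Since $\Wh(\Gamma) \subseteq \Wh(\rose_n)$ with the same edge relation restricted to a smaller vertex set, the inclusion $\iota : \Wh(\Gamma) \hookrightarrow \Wh(\rose_n)$ is $1$-Lipschitz, so
\[
d_{\Wh(\Gamma)}(v, w) \;\geq\; d_{\Wh(\rose_n)}(\iota(v), \iota(w))
\]
for all $v, w$ in a common component of $\Wh(\Gamma)$. It therefore suffices to produce, for any fixed $v \in \Wh(\Gamma)$, a sequence of vertices $v_k$ in the component of $v$ whose images in $\Wh(\rose_n)$ are arbitrarily far from $\iota(v)$.

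The rose case $\Gamma = \rose_n$ serves as the base input. I would show that the separability length $\seplength{\cdot}$ is $1$-Lipschitz under $\Wh(\rose_n)$-adjacency (since each edge corresponds to multiplying by a $\Csep$-element, changing separability length by at most $1$) and unbounded on $\free_n$. Unboundedness can be established by exhibiting an explicit family of words requiring arbitrarily many $\Csep$-factors, or by invoking the isometric $\Out(\free_n)$-action of \Cref{theorem:isometric-action} to transport vertices arbitrarily far along a non-elliptic orbit.

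For a general cover, I would produce witnesses inductively: given $v_k$ in the component of $v$ with $d_{\Wh(\rose_n)}(\iota(v), \iota(v_k)) \geq k$, extend by multiplying by an element $u_{k+1} \in \Csep \cap \pi_1(\Gamma)$ adjacent to $v_k$ in $\Wh(\Gamma)$ and strictly increasing the $\Wh(\rose_n)$-distance from $\iota(v)$. Such a $u_{k+1}$ should exist because $\pi_1(\Gamma)$ has finite index in $\free_n$, so every proper free factor of $\free_n$ meets $\pi_1(\Gamma)$ in an infinite subgroup; combined with infinite diameter of $\Wh(\rose_n)$, a pigeonhole over the finitely many cosets of $\pi_1(\Gamma)$ in $\free_n$ produces the required step.

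The main obstacle is justifying this inductive step: I need to simultaneously preserve $\Wh(\Gamma)$-adjacency (strictly stronger than $\Wh(\rose_n)$-adjacency, since both endpoints must lie in $\pi_1(\Gamma)$) and increase $\Wh(\rose_n)$-distance from the base vertex. If the direct construction is troublesome, a cleaner fallback is a contradiction argument: were some component $\mathcal{C}$ of $\Wh(\Gamma)$ to have finite diameter $D$, then by \Cref{thm: equiv-connectivity} the vertices of $\mathcal{C}$ would sit inside a $\Wh(\rose_n)$-ball of radius $D$ around a fixed base vertex, bounding separability length on a finite-index-saturated subset of $\free_n$ and contradicting the infinite diameter of $\Wh(\rose_n)$.
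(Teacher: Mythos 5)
There is a genuine gap: your argument correctly sets up the reduction (the inclusion $\Wh(\Gamma)\hookrightarrow\Wh(\rose_n)$ is $1$-Lipschitz, so it suffices to find vertices in each component whose $\Wh(\rose_n)$-distances are unbounded), but it never actually establishes the base input that everything else leans on, namely that $\seplength{\cdot}$ is unbounded on $\free_n$. That unboundedness \emph{is} the theorem for $\Gamma=\rose_n$, and it is the genuinely hard step: the paper proves it by constructing positive infinite words all of whose long cyclic subwords are inseparable (\Cref{lemma: infinite-positive-filling-words}), converting a bounded-separable-length hypothesis into a uniform area bound for Van Kampen diagrams over $\langle X_n\mid\Csep\rangle$ (\Cref{lemma:paths=diagrams}), and then deriving a contradiction from the bound on the number of boundary arcs (\Cref{prop:arcs of disk diagrams}) via pigeonhole. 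Your two suggested substitutes do not fill this in: ``exhibiting an explicit family of words requiring arbitrarily many $\Csep$-factors'' is precisely the content to be supplied, and the $\Out(\free_n)$-action of \Cref{theorem:isometric-action} cannot help, since an isometric action has unbounded orbits only if the space already has infinite diameter --- the argument is circular. Note also that the na\"ive candidates fail: e.g.\ $\seplength{vx_1^{m}}\le\seplength{v}+1$ for all $m$, so producing words of large separable length requires real control of the sort the filling-word construction provides.

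The second gap is the passage to an arbitrary component of $\Wh(\Gamma)$. Your inductive step --- find $u_{k+1}\in\Csep\cap\pi_1(\Gamma)$ adjacent to $v_k$ in $\Wh(\Gamma)$ that strictly increases $\Wh(\rose_n)$-distance from the basepoint --- is exactly the assertion that needs proof, and the coset pigeonhole you sketch does not produce it; a priori a component could sit inside a bounded $\Wh(\rose_n)$-ball. Your fallback also fails: a single component $\mathcal{C}$ of finite diameter only bounds separable length on the conjugacy classes lying in $\mathcal{C}$, and since $\Wh(\Gamma)$ need not be connected (whether it is connected is the paper's main open question, \Cref{question:main question} via \Cref{thm: equiv-connectivity}), this is not a ``finite-index-saturated'' subset of $\free_n$ and yields no contradiction with the diameter of $\Wh(\rose_n)$. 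The paper resolves this by showing every component contains the class of a positive word $w\in\pi_1(\Gamma)$, choosing a positive $v\in\pi_1(\Gamma)$ whose Whitehead graph remains $2$-connected after deleting any edge (so $v$ is a subword of no separable word, by \Cref{lemma: subwords-separable-words}), and running the same diagram-area argument on the words $wv^m$, which all lie in the given component. Some version of both constructions is unavoidable, so as written the proposal reduces the theorem to two unproved claims that together constitute its entire content.
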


\noindent Let $\curves\prim$ be the set of all primitive elements of $\free_n$. Since $\Wh(\rose_n)$ is a quotient of $\Cay(\free_n, \Csep)$, and $\Cay(\free_n, \Csep)$ is quasi-isometric to $\Cay(\free_n, \curves\prim)$, we obtain the following corollary.

\begin{restatable}{cor}{cayinfdiameter}\label{cor:Cay-inf-diameter}
    The graphs $\Cay(\free_n, \Csep)$ and $\Cay(\free_n, \curves\prim)$ have infinite diameter.
\end{restatable}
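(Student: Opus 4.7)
The plan is to chain three observations. First, applying \Cref{thm:infinite-diameter} with $\Gamma = \rose_n$ shows every component of $\Wh(\rose_n)$ has infinite diameter, and since the introduction records $\Wh(\rose_n)$ as connected, $\Wh(\rose_n)$ itself has infinite diameter. Second, the quotient map $\Cay(\free_n, \Csep) \twoheadrightarrow \Wh(\rose_n)$ is $1$-Lipschitz, so distances in the Cayley graph dominate those in $\Wh(\rose_n)$; hence $\Cay(\free_n, \Csep)$ also has infinite diameter.

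Third, I would verify the quasi-isometry $\Cay(\free_n, \Csep) \simeq_{\mathrm{QI}} \Cay(\free_n, \curves\prim)$ via the identity on vertex sets. The containment $\curves\prim \subset \Csep$ (each primitive generates a rank-one free factor) gives $d_{\Csep}\le d_{\curves\prim}$ immediately. For the reverse direction, it suffices to write every $w \in \Csep$ as a product of a uniformly bounded number of primitives, and in fact two always work: choose a basis $a_1,\dots,a_k$ of a proper free factor containing $w$ and extend to a basis $a_1,\dots,a_k,b_1,\dots,b_{n-k}$ of $\free_n$. Because $w$ does not involve $b_1$, the endomorphism sending $b_1 \mapsto w b_1$ and fixing the remaining generators is an automorphism, so $w b_1$ is primitive; then $w = (w b_1)(b_1^{-1})$ exhibits $w$ as a product of two primitives. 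Thus $d_{\curves\prim}\le 2\, d_{\Csep}$, the two Cayley graphs are quasi-isometric, and infinite diameter transfers.

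The only non-formal step is the two-primitive factorization, but the automorphism argument dispatches it in any rank $n\ge 2$, so there is no real obstacle; the corollary follows by combining the three observations.
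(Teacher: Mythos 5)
Your proof is correct, and its first two steps (infinite diameter of the connected complex $\Wh(\rose_n)$ via \Cref{thm:infinite-diameter}, then pulling back along the $1$-Lipschitz quotient $\Cay(\free_n, \Csep) \twoheadrightarrow \Wh(\rose_n)$) are exactly the paper's argument. Where you diverge is in handling $\Cay(\free_n, \curves\prim)$: the paper uses only the containment $\curves\prim \subset \Csep$, which makes $\Cay(\free_n, \curves\prim)$ a subgraph of $\Cay(\free_n, \Csep)$ on the same vertex set, so $d_{\curves\prim} \geq d_{\Csep}$ and infinite diameter transfers in one line. You instead prove the full quasi-isometry by also establishing $d_{\curves\prim} \leq 2\, d_{\Csep}$ via the two-primitive factorization $w = (wb_1)(b_1^{-1})$ using a transvection. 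That extra direction is correct (and is essentially the argument the paper itself deploys later, in the proof of \Cref{cor:Cay-prim-nonhyperbolicity}, where the quasi-isometry genuinely is needed to transfer nonhyperbolicity), but it is not needed for infinite diameter; only the easy containment direction does any work here. So your route buys a stronger statement at the cost of an argument that this corollary does not require.
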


Note that, after this work was completed, Putman informed the author that \Cref{cor:Cay-inf-diameter} was shown in a 2003 paper \cite{bardakov2003palindromic} of Bardakov, Shpilrain, and Tolstych.  However, the proof of \Cref{cor:Cay-inf-diameter} provided here is different from the proof in \cite{bardakov2003palindromic}. We also show that $\Wh(\rose_n)$ is not quasi-isometric to many of the other spaces that admit an isometric action of $\Out(\free_n)$, including the free factor complex and the free splitting complex, because $\Wh(\rose_n)$ is nonhyperbolic.  

\begin{restatable}{thm}{nonhyperbolicity}
\label{theorem:nonhyperbolicity}
$\Wh(\rose_n)$ is nonhyperbolic.
\end{restatable}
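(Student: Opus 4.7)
The plan is to exhibit, for each $R>0$, a configuration in $\Wh(\rose_n)$ incompatible with Gromov $R$-hyperbolicity. Concretely, I would produce arbitrarily long \emph{geodesic bigons} — pairs of distinct geodesics sharing endpoints whose interior vertices diverge — since a $\delta$-hyperbolic graph admits bigons of total length at most $2\delta$. Because $\Wh(\rose_n)$ is a quotient of $\Cay(\free_n,\Csep)$ and, via \Cref{cor:Cay-inf-diameter}, quasi-isometric to $\Cay(\free_n,\curves\prim)$, and hyperbolicity is a quasi-isometry invariant for geodesic metric graphs, it suffices to exhibit the bigons in one of these Cayley graphs and then transfer the obstruction.

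The construction would start from the machinery used in the proof of \Cref{thm:infinite-diameter}, which I expect produces a coarsely $1$-Lipschitz invariant $\phi\colon\Cay(\free_n,\Csep)\to\Z_{\geq 0}$ with $\phi(e)=0$ and unbounded image. I would then choose a family $\{v_k\}\subset\free_n$ admitting two distinct minimal factorizations into separable elements,
\[
v_k \;=\; s_1 s_2 \cdots s_{\ell_k} \;=\; t_1 t_2 \cdots t_{\ell_k}, \qquad s_i,t_j\in\Csep,\quad \ell_k=\Theta(k),
\]
such that the corresponding prefixes $s_1\cdots s_i$ and $t_1\cdots t_i$ are separated by $\Omega(k)$ in $\Cay(\free_n,\Csep)$ for interior indices $i$. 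A plausible candidate for $v_k$ is an iterated commutator-type word such as $(a_1 a_2 a_1^{-1} a_2^{-1})^k$, whose two factorizations correspond to a left-to-right parsing and a ``flipped'' parsing obtained via a Whitehead automorphism; the invariant $\phi$ (or a direction-sensitive refinement of it) should distinguish the two prefix families and certify their separation.

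The main obstacle is providing the lower bound on the distance between the two prefix families, which requires control over \emph{every} geodesic from $e$ to $v_k$, not merely over the two explicit ones. Upper bounds on $\seplength{\cdot}$ follow trivially from any factorization, but ruling out shortcut factorizations that bypass both prescribed paths is genuinely delicate: one must show that every separable geodesic factorization of $v_k$ stays in a bounded neighborhood of one of the two candidates. I expect that a refinement of the $\phi$-argument from \Cref{thm:infinite-diameter}, augmented with a Whitehead-graph or Stallings-fold analysis of the admissible separable factorizations of $v_k$, will suffice; this refinement is where the technical work of the proof should concentrate. Once the bigons are produced, forming triangles by inserting a third vertex near the midpoint of one side yields the desired violation of $\delta$-slimness for every $\delta$.
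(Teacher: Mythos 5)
Your top-level strategy---produce arbitrarily fat geodesic bigons/triangles and contradict $\delta$-thinness---is the same as the paper's, which builds triangles on $[1]$, $[w(a,k,s,n)]$, $[w(b,k,s,n)]$ with two length-$2s$ geodesics between the latter pair, one through $[1]$ and one staying outside the ball of radius $s$ about $[1]$. But the proposal has several genuine gaps. First, your reduction runs in the wrong direction: the map $\Cay(\free_n,\Csep)\to\Wh(\rose_n)$ is only known to be a $1$-Lipschitz quotient (identifying conjugates), not a quasi-isometry, so fat bigons in the Cayley graph need not survive in the quotient; the paper argues directly in $\Wh(\rose_n)$ and then \emph{lifts} the fat triangles to $\Cay(\free_n,\Csep)$, where the $1$-Lipschitz property transfers lower bounds upward. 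Second, the ``coarsely $1$-Lipschitz invariant $\phi$'' you want to refine does not exist in the proof of \Cref{thm:infinite-diameter}; that proof is a contradiction argument counting boundary arcs of Van Kampen diagrams for \emph{positive} words (positivity is essential: it forces the total boundary-arc length to equal the word length). Your candidate $(a_1a_2a_1^{-1}a_2^{-1})^k$ is not positive, so none of that machinery applies to it, and you give no replacement.

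Third, and most importantly, you correctly identify the crux---lower-bounding the distance between the two prefix families against \emph{all} competing factorizations---but you defer it entirely to an unspecified ``refinement.'' The paper's solution is a concrete combinatorial obstruction: an explicit family of positive separable words $r_i(n)$ such that no separable word contains two distinct $r_i,r_j$ as disjoint subwords (\Cref{lemma: only one ri}, proved by iterating a Whitehead automorphism and tracking cut vertices). This single lemma, combined with upper bounds on the number of boundary arcs of disk diagrams (\Cref{prop:arcs of disk diagrams}) and of \emph{annular} diagrams (\Cref{lemma: boundary arcs of annular diagram}), forces any diagram of too-small area to have a boundary arc longer than any separable word can supply, yielding both $\seplength{w(a,k,s,n)}=s$ and $\dsep{[w(a,k,s,n)]}{[w(b,k,s,n)]}=2s$. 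The annular-diagram step is needed precisely to control every path between the two endpoints, not just paths through $[1]$; nothing in your proposal plays this role. Without an analogue of the $r_i$ subword obstruction and the annular-diagram counting, the proposal does not close.
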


\noindent In addition, we obtain the following corollary. 

\begin{restatable}{cor}{caynonhyperbolicity}\label{cor:Cay-prim-nonhyperbolicity}
    The Cayley graphs $\Cay(\free_n, \Csep)$ and $\Cay(\free_n, \curves\prim)$ are nonhyperbolic.
\end{restatable}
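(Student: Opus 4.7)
The plan is to reduce the corollary to the already-established Theorem \ref{theorem:nonhyperbolicity} via quasi-isometry invariance of hyperbolicity. The excerpt notes that $\Cay(\free_n, \Csep)$ and $\Cay(\free_n, \curves\prim)$ are quasi-isometric, so it suffices to prove that $\Cay(\free_n, \Csep)$ is nonhyperbolic. The essential step is to promote the relationship between the Cayley graph and $\Wh(\rose_n)$, which is only asserted to be a quotient, into a bona fide quasi-isometry, so that nonhyperbolicity can be pulled back.

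Concretely, I would analyze the quotient map $q \colon \Cay(\free_n, \Csep) \to \Wh(\rose_n)$. By construction $q$ is surjective and 1-Lipschitz, so the task reduces to establishing that the fibers $q^{-1}(v)$ have uniformly bounded diameter in $\Cay(\free_n, \Csep)$. The identifications built into the Whitehead complex typically come from a finite list of bounded-length moves (conjugation, inversion, or analogous Whitehead-type relations), and verifying that any two preimages of a common vertex differ by multiplication by a word of bounded length in the generating set $\Csep$ yields the required uniform diameter bound. Once $q$ is a quasi-isometry, Theorem \ref{theorem:nonhyperbolicity} transfers and gives the nonhyperbolicity of $\Cay(\free_n, \Csep)$, and then the quasi-isometry with $\Cay(\free_n, \curves\prim)$ completes the corollary.

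If a fiber-boundedness argument for $q$ fails, the fallback is to lift the obstruction to hyperbolicity used in the proof of Theorem \ref{theorem:nonhyperbolicity}. If that proof proceeds by exhibiting sequences of geodesic triangles (or pairs of divergent quasi-geodesics, or a coarsely flat subspace) witnessing arbitrarily large thinness constants in $\Wh(\rose_n)$, one can pick preimages in $\free_n$ of the relevant vertices and use the fact that $q$ is 1-Lipschitz to conclude that $\Csep$-distances between these preimages are at least as large as the corresponding distances in the quotient; the resulting configuration in $\Cay(\free_n, \Csep)$ then obstructs hyperbolicity at the same scale.

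The main obstacle I expect is precisely the bounded-fiber verification: one needs a concrete description of the identifications in $\Wh(\rose_n)$, and if these include, for example, conjugation by arbitrary free-group elements, the fibers will not be bounded in the $\Csep$-word metric and the direct quasi-isometry approach breaks down. In that case one is forced to commit to the lifting strategy, which requires intimate knowledge of the specific geometric witness produced in the proof of Theorem \ref{theorem:nonhyperbolicity} and careful bookkeeping to ensure that its scale is preserved, not just bounded above, under the passage to $\Cay(\free_n, \Csep)$.
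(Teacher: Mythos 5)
Your fallback strategy is the paper's actual argument, and your suspicion about the primary strategy is well founded: the fibers of the quotient $q\colon\Cay(\free_n,\Csep)\to\Wh(\rose_n)$ are entire conjugacy classes of $\free_n$ (and edges with conjugate labels between the same pair of vertices are also identified), so the bounded-fiber route would require a uniform bound on $\seplength{[\overline{v},c]}$ over all $v,c\in\free_n$, which is not established anywhere and is not attempted in the paper. What the paper does is precisely your lifting argument: the fat geodesic triangles on $\{[1],[w(a,k,s,n)],[w(b,k,s,n)]\}$ from the proof of \Cref{theorem:nonhyperbolicity} pull back to triangles $\widetilde T$ on $\{1,\,w(a,k,s,n),\,w(b,k,s,n)\}$ in $\Cay(\free_n,\Csep)$; since $q$ is $1$-Lipschitz and the lifted sides have the same combinatorial length as their geodesic images, the sides of $\widetilde T$ are geodesics, and the side through $w(a,k,s,n)w(b,k,s,n)$ stays outside the ball of radius $s$ about $1$. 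This produces triangles violating $\delta$-thinness at every scale, exactly as you describe.

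The one ingredient you defer entirely to citation is the quasi-isometry between $\Cay(\free_n,\Csep)$ and $\Cay(\free_n,\curves\prim)$, which the paper in fact proves inside this corollary. The inclusion $\curves\prim\subset\Csep$ gives one comparison of the word metrics for free; for the other, the paper notes that a separable $w$ lies in a free factor omitting some element $a$ of a suitable basis, so writing $w=w_1w_2$ yields $w=(w_1a)(\overline{a}w_2)$ as a product of two primitives, each containing a single occurrence of $a^{\pm1}$ (cf.\ \Cref{remark:one-letter-primitive}). Hence the two metrics are bi-Lipschitz and nonhyperbolicity transfers. A self-contained write-up should include this step rather than treating the quasi-isometry as a black box.
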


We prove \Cref{theorem:nonhyperbolicity} and \Cref{cor:Cay-prim-nonhyperbolicity} by finding explicit infinite sets of arbitrarily fat geodesic triangles in $\Wh(\rose_n)$ and $\Cay(\free_n, \Csep)$. Our techniques are elementary and directly analyze the structure of the graphs $\Wh(\rose_n)$ and $\Cay(\free_n, \Csep).$ 

The recent preprint \cite{Chu25infinite} recovers \Cref{cor:Cay-prim-nonhyperbolicity} and expands the result to other Cayley graphs of groups with conjugation-invariant generating sets.  The proof of \Cref{cor:Cay-prim-nonhyperbolicity} in this paper and the proof in \cite{Chu25infinite} are quite different: the proof in \cite{Chu25infinite} uses machinery developed in \cite{brandenbursky2016cancellation} and \cite{brandenbursky2019aut} related to $\Aut$-invariant norms on certain groups. 

There are other known complexes admitting isometric actions of $\Out(\free_n)$ which are nonhyperbolic, including Culler-Vogtmann's Outer space (introduced in \cite{culler1986moduli}) and the edge splitting complex (proven to be nonhyperbolic by Sabalka and Savchuk in \cite{sabalka2014geometry}). The former is an analog of Teichm\"uller space for free groups, and the latter was a proposed analog of the curve complex until it was shown to be nonhyperbolic. Interestingly, though the author was made aware of Sabalka and Savchuk's paper after this work was completed, the techniques used in \cite{sabalka2014geometry} to prove that the edge splitting complex is nonhyperbolic are similar to those used here. Both proofs utilize \emph{Van Kampen diagrams} (see \Cref{sec: Van Kampen diagrams}) and a theorem of Stallings which gives an algorithm to determine whether an element of $\free_n$ is separable.

In \cite{farb2016finite}, Farb and Hensel consider a finite graph $X$ and the set of primitive elements of $\free_n$. They ask if $\Hom_1^{\mathrm{prim}}(\Gamma; \C) = \Hom_1(\Gamma; \C)$ for every finite regular cover $\pi: \Gamma\rightarrow X$. They prove that, if the deck group of $\pi$ is abelian or two-step nilpotent, the answer is \emph{yes}. However, in general, the answer is \emph{no}. Malestein and Putman state in Theorem C of \cite{malestein2019simple} that if $n\geq 2$ and $\mathcal{C}$ is contained in the union of finitely many $\Aut(\free_n)$-orbits, there are finite-index normal subgroups $R\triangleleft \free_n$ such that $\Hom_1^{\mathcal{C}}(R; \Q)\neq \Hom_1(R; \Q).$ Since the set of all primitive elements of $\free_n$ is itself an $\Aut(\free_n)$-orbit, this implies that the primitive homology of a regular cover $\Gamma$ of $\rose_n$ is not necessarily finite-index in the full homology of $\Gamma$. Later, using GAP \footnote{Groups, Algorithms, Programming -
a System for Computational Discrete Algebra}, Lee, Rosenblum-Sellers, Safin, and Tenie \cite{lee2020graph} found covers $\Gamma$ of relatively low degree compared to those of \cite{malestein2019simple} which satisfy $\Hom_1\prim(\Gamma; \C)\neq\Hom_1(\Gamma; \C).$ The deck groups of these covers are rank-$3$ $p$-groups for $p=2$ of order at most 512.

Let $\mathcal{C}\scc$ be the set of simple closed curves on a surface $\Sigma$ with genus $g$ and $n$ punctures (possibly zero). For $\Sigma$ with large enough complexity, i.e. $(g, n)\not\in\{(0, 0), (0, 1), (0, 2), (1, 0)\}$, Koberda and Santharoubane \cite{Koberda_2016} give examples of finite covers $\widetilde{\Sigma}\rightarrow\Sigma$ satisfying \[\Hom_1\scc(\widetilde{\Sigma}; \Z)\neq\Hom_1(\widetilde{\Sigma}; \Z)\] using TQFT representations. Furthermore, Theorem C of \cite{malestein2019simple} implies that, if $\Sigma$ is a punctured surface of genus at least two, there are finite regular covers $\widetilde{\Sigma}$ of $\Sigma$ satisfying $\Hom_1^{\mathrm{scc}}(\widetilde{\Sigma}; \Q)\neq \Hom_1(\widetilde{\Sigma}; \Q)$.  This is because $\mathcal C\scc$ is contained in finitely many $\Aut(\free_{2g+n-1})$-orbits. The same is true for the set of curves on $\Sigma$ with at most $k$ self-intersections.  Recently, Klukowski (\cite{klukowski2023simple}) showed that for any surface $\Sigma=\Sigma_{g, n}$ satisfying $g\geq 2$ and $n\geq 0$, there is a finite 
cover $\widetilde{\Sigma}$ of $\Sigma$ such that $\Hom_1\scc(\widetilde{\Sigma}; \Q)\neq \Hom_1(\widetilde{\Sigma}; \Q).$

In 2008, while working on the congruence subgroup problem for $\Mod(\Sigma_g)$, Kent asked the following question, which remains open despite the closely related work described above.

\begin{question}\cite{Kent2012} \label{question:Kent}
    Let $\Sigma$ be a compact oriented surface, $\mathcal{C}^{\mathrm{nf}}$ the set of all nonfilling curves on $\Sigma$, and $\pi:\widetilde{\Sigma}\rightarrow \Sigma$ a finite regular cover.  Is $\Hom_1\nf(\widetilde{\Sigma}; \Z) = \Hom_1(\widetilde{\Sigma}; \Z)?$
\end{question}

\noindent Using Boggi's original approach to the congruence subgroup problem (see \cite{boggi2006profinite}) and the ideas of Kent mentioned previously, a positive answer to this question would also give a proof of the congruence subgroup problem in genus two. Recently, Boggi, Putman, and Salter \cite{boggi2023generating} made significant progress on \Cref{question:Kent}: they showed that there is a set of nonfilling curves on $\Sigma$--namely, the set $\mathcal{C}\pant$ of curves contained in pairs of pants--which generates the rational homology of every finite branched cover of a closed oriented surface.  This does not fully answer \Cref{question:Kent}, but it does imply that $\Hom_1\nf(\widetilde{\Sigma}; \Z)$ is finite-index in $\Hom_1(\widetilde{\Sigma}; \Z)$ for any finite cover $\widetilde{\Sigma}$ of a closed oriented surface $\Sigma$.  

We say an element of $\free_n$ is \emph{separable} if it is contained in a proper free factor of $\free_n$. Note that the set of nonfilling curves on a punctured surface $\Sigma$ is contained in the set of separable curves on $\Sigma$. Therefore, if $\Hom_1\sep(\Gamma; \Z)\neq\Hom_1(\Gamma; \Z),$ $\Hom_1\nf(\widetilde{\Sigma}; \Z)\neq\Hom_1(\widetilde{\Sigma}; \Z).$ A weaker version of \Cref{question:main question} is below.

\begin{question}\label{question: homology-question}
    Is $\Hom_1(\Gamma; \Z)$ generated by $\Csep$ for every finite regular cover $\Gamma\rightarrow \rose_n?$
\end{question}

\noindent We give the following reformulation of this question.

\begin{restatable}{prop}{connectedhomology}
\label{connectedhomology}
    Given a finite-index regular cover $\Gamma$ of $\rose_n$, there is a naturally defined space $\W_1(\Gamma)$ which is connected if and only if $\Hom_1\sep(\Gamma; \Z) = \Hom_1(\Gamma; \Z).$
\end{restatable}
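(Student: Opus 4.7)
The plan is to construct $\W_1(\Gamma)$ as the first-homology analog of the Whitehead complex and to argue its connectivity by the same Cayley-graph heuristic that drives \Cref{thm: equiv-connectivity}. Concretely, I would take the vertex set of $\W_1(\Gamma)$ to be $\Hom_1(\Gamma;\Z)$ and, for every closed curve $\gamma$ in $\Gamma$ with $\pi(\gamma)\in\Csep$ (where $\pi\colon \Gamma \to \rose_n$ is the covering map), include an edge joining each vertex $v$ to $v+[\gamma]$. Equivalently, $\W_1(\Gamma)$ is the Cayley graph of the abelian group $\Hom_1(\Gamma;\Z)$ with respect to the generating multiset $S = \{[\gamma] : \gamma\text{ a closed curve in }\Gamma,\ \pi(\gamma)\in\Csep\}$. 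To show naturality I would also describe $\W_1(\Gamma)$ as a quotient of $\Wh(\Gamma)$: the abelianization map $\pi_1(\Gamma)\to \Hom_1(\Gamma;\Z)$ sends the translates of vertices under the deck group / the $\pi_1(\Gamma)$-action on $\Wh(\Gamma)$ to coset representatives, and this gives a simplicial map $\Wh(\Gamma)\to\W_1(\Gamma)$ that sends edges to edges of the same form.

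With the construction in place, the equivalence is a Cayley-graph tautology. A Cayley graph $\Cay(G,S)$ of an abelian group $G$ is connected if and only if $S$ generates $G$, since the component of the identity is precisely the subgroup $\langle S\rangle$ and the graph is vertex-transitive under $G$. By the definition recalled in the introduction, $\Hom_1\sep(\Gamma;\Z)$ is exactly the subgroup of $\Hom_1(\Gamma;\Z)$ spanned by the classes $[\gamma]$ with $\gamma$ a closed curve in $\Gamma$ satisfying $\pi(\gamma)\in\Csep$, i.e.\ by $S$. Hence $\W_1(\Gamma)$ is connected precisely when $S$ generates $\Hom_1(\Gamma;\Z)$, which is precisely the condition $\Hom_1\sep(\Gamma;\Z)=\Hom_1(\Gamma;\Z)$.

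The routine content is small; the main obstacle, and the part that deserves care, is packaging the construction so that $\W_1(\Gamma)$ is genuinely \emph{natural} rather than ad hoc. In particular, I would want the vertex set, edge set, and deck group action to be intrinsic to $\Gamma$ (and compatible with the action of $\Out(\free_n)$ in the characteristic case), and I would want to check that the quotient map $\Wh(\Gamma)\to\W_1(\Gamma)$ is equivariant and realizes $\W_1(\Gamma)$ as the abelianization of $\Wh(\Gamma)$ in an appropriate sense. Once that compatibility is established, the forward and backward implications of the theorem follow immediately from vertex-transitivity and the description of connected components as cosets of $\langle S\rangle = \Hom_1\sep(\Gamma;\Z)$ in $\Hom_1(\Gamma;\Z)$.
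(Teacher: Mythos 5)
Your proof is correct, and the statement as written is existential, so exhibiting \emph{any} naturally defined space with the stated property suffices; but your construction is genuinely different from the paper's. You take the vertex set of $\W_1(\Gamma)$ to be $\Hom_1(\Gamma;\Z)$ itself and build the Cayley graph on the separable homology classes, so that connectivity reduces to the tautology that $\Cay(G,S)$ is connected iff $\langle S\rangle=G$. The paper instead keeps the vertex set of $\Wh(\Gamma)$ (conjugacy classes of elements of $\pi_1(\Gamma)$) and \emph{adds} edges labeled by elements of $[\pi_1(\Gamma),\pi_1(\Gamma)]$; connectivity is then equivalent to every element of $\pi_1(\Gamma)$ being a product of elements of $\pi_1(\Gamma\sep)\cup[\pi_1(\Gamma),\pi_1(\Gamma)]$, which is the homology condition since $\Hom_1(\Gamma;\Z)$ is the abelianization. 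What the paper's version buys is structural: $\Wh(\Gamma)$ is literally a subgraph of its $\W_1(\Gamma)$ on the same vertices, and $\W_1(\Gamma)$ is the first term of a nested family $\W_i(\Gamma)$ built from the lower central series whose intersection is $\Wh(\Gamma)$, interpolating between the homological question and \Cref{question:main question}. Your version buys simplicity and makes \Cref{thm:W1-has-finitely-many-components} immediate, since the components are exactly the cosets of $\Hom_1\sep(\Gamma;\Z)$, whose number is the (finite, by Boggi--Putman--Salter) index. One small caveat on your ``naturality'' packaging: edges of $\Wh(\Gamma)$ are labeled by conjugacy classes of arbitrary nontrivial elements of $\pi_1(\Gamma\sep)$, not only by elements of $\Csep\cap\pi_1(\Gamma)$, so the abelianization map sends an edge of $\Wh(\Gamma)$ into $\Hom_1\sep(\Gamma;\Z)$ but not necessarily onto a single generator from your set $S$; to make the induced map simplicial you should take the edge labels of your Cayley graph to be all nonzero elements of the image of $\pi_1(\Gamma\sep)$, which generates the same subgroup and so does not affect the connectivity equivalence.
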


\noindent Finally, using the result of Boggi, Putman, and Salter mentioned above, we obtain the following.

\begin{restatable}{prop}{finitelyManyComponents}\label{thm:W1-has-finitely-many-components}
    Let $\Gamma\rightarrow\rose_n$ be a finite regular cover.  Then $\W_1(\Gamma)$ has finitely many components.
\end{restatable}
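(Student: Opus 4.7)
The plan is to identify $\pi_0(\W_1(\Gamma))$ with a concrete algebraic quotient and then bound its cardinality using the Boggi--Putman--Salter theorem. By analogy with \Cref{connectedhomology}, whose connectivity criterion reads $\Hom_1\sep(\Gamma;\Z) = \Hom_1(\Gamma;\Z)$, I expect the components of $\W_1(\Gamma)$ to correspond to the cosets of $\Hom_1\sep(\Gamma;\Z)$ in $\Hom_1(\Gamma;\Z)$; this reduces the theorem to showing that $\Hom_1\sep(\Gamma;\Z)$ has finite index in $\Hom_1(\Gamma;\Z)$, or equivalently that $\Hom_1\sep(\Gamma;\Q) = \Hom_1(\Gamma;\Q)$.

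To prove the rational equality, I would realize $\rose_n$ as a spine of $\Sigma = \Sigma_{g,k}$ with $2g + k - 1 = n$, choosing $g \geq 2$ and $k \geq 2$ (possible for $n \geq 5$). The cover $\Gamma \to \rose_n$ extends to a regular cover $\widetilde\Sigma \to \Sigma$ of punctured surfaces with $\Gamma$ a spine of $\widetilde\Sigma$, and filling in the punctures yields a finite branched cover $\widehat{\widetilde\Sigma} \to \Sigma_g$ of closed oriented surfaces. From the long exact sequence of the pair $(\widehat{\widetilde\Sigma}, \widetilde\Sigma)$, the inclusion induces a surjection $\Hom_1(\Gamma;\Q) = \Hom_1(\widetilde\Sigma;\Q) \twoheadrightarrow \Hom_1(\widehat{\widetilde\Sigma};\Q)$ whose kernel is spanned by homology classes of loops around the punctures of $\widetilde\Sigma$.

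Boggi--Putman--Salter gives $\Hom_1\pant(\widehat{\widetilde\Sigma};\Q) = \Hom_1(\widehat{\widetilde\Sigma};\Q)$. A pants curve on $\Sigma_g$ can be isotoped off the branch locus into $\Sigma$, where it is an essential simple closed curve, hence nonfilling on $\Sigma$ and therefore separable in $\free_n$. Consequently any closed-curve lift of a pants curve has projection in $\Csep$, placing its class in $\Hom_1\sep(\Gamma;\Q)$, and by BPS these classes surject onto $\Hom_1(\widehat{\widetilde\Sigma};\Q)$. For the kernel, the condition $k \geq 2$ allows one to eliminate any single puncture loop $c_j$ from the surface relation $[a_1,b_1]\cdots[a_g,b_g]c_1\cdots c_k = 1$, producing a free basis of $\free_n$ in which each remaining $c_i$ is a generator; thus every puncture loop of $\Sigma$ is primitive in $\free_n$, hence separable, so each puncture-loop class of $\widetilde\Sigma$---whose projection to $\Sigma$ is a power of some $c_i$---also lies in $\Hom_1\sep(\Gamma;\Q)$. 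The pants-lift classes together with the puncture-loop classes therefore span all of $\Hom_1(\Gamma;\Q)$.

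The main obstacle is the small-rank cases $n \in \{2,3,4\}$, for which no pair $(g,k)$ simultaneously achieves $g \geq 2$, $k \geq 2$, and $2g + k - 1 = n$. These will need a separate treatment---perhaps via a different embedding, a stabilization trick, or a case-by-case analysis combining the abelian result of Farb--Hensel with direct arguments for the finitely many low-rank non-abelian deck groups---and will likely be the most technically delicate part of the proof. The remaining verifications---that pants curves can be isotoped off the finite branch set without altering their homology, that essential simple closed curves on a punctured surface are nonfilling, and the claim about the kernel and primitivity of puncture loops---are all routine.
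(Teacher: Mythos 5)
Your overall strategy is the same as the paper's: both arguments reduce the theorem to showing that $\Hom_1\sep(\Gamma;\Z)$ has finite index in $\Hom_1(\Gamma;\Z)$, and both obtain that finite index from the Boggi--Putman--Salter theorem after realizing $\rose_n$ as a spine of a punctured surface, filling in the punctures, and checking that the pants classes and the puncture classes all land in $\Hom_1\sep(\Gamma;\Z)$. Your first step is in fact a little cleaner than the paper's: identifying $\pi_0(\W_1(\Gamma))$ with the cosets of $\Hom_1\sep(\Gamma;\Z)$ (using that edges of $\W_1(\Gamma)$ are labeled by $\Gamma\sep\cup[\pi_1(\Gamma),\pi_1(\Gamma)]$ and that both of these sets are conjugation-invariant) gives the exact count of components, whereas the paper settles for a cruder $K2^m$ upper bound from a chosen generating set. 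Your treatment of the kernel of $\Hom_1(\widetilde\Sigma;\Q)\twoheadrightarrow\Hom_1(\widehat{\widetilde\Sigma};\Q)$ via primitivity of the puncture loops when $k\geq 2$ is also more explicit than the paper's remark that puncture-parallel curves are simple and hence lie in $\curves\pant$.

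The one genuine gap is the unresolved cases $n\in\{2,3,4\}$, and it is entirely self-inflicted: you impose $g\geq 2$ on the base surface, but nothing in the Boggi--Putman--Salter theorem requires this. Their result, as quoted in the paper, applies to any finite branched covering between closed oriented surfaces, with no restriction on the genus of the base. So you should simply take $g=0$ and $k=n+1\geq 3$ for every $n\geq 2$: then $\rose_n$ is a spine of $\Sigma_{0,n+1}$, the condition $k\geq 2$ holds automatically, every puncture loop is primitive in $\free_n$, and the branched cover of $S^2$ obtained by filling punctures is still a valid input to Boggi--Putman--Salter (pairs of pants embed in $S^2$, so $\curves\pant$ is nonempty and the theorem is not vacuous). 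With that choice the ``separate treatment'' you flag for low rank disappears and your argument goes through uniformly. The remaining deferred verifications (pants curves can be pushed off the finite branch set, and curves contained in pairs of pants of $\Sigma_{0,n+1}$ are separable in $\free_n$) are of the same nature as assertions the paper itself makes without detailed proof, so I would not count them against you, but the genus-zero fix is needed before the proposal can be called complete.
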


\subsection*{Outline} 
In \Cref{sec: background}, we describe the connection between graphs and free groups (\Cref{sec: graphs and free groups}), state some important facts about Van Kampen diagrams (\Cref{sec: Van Kampen diagrams}), and recall an algorithm of Whitehead which can determine whether $w\in\free_n$ is separable (\Cref{sec: Whitehead's algorithm}). In \Cref{sec:whiteheadspace}, we define the separability complex (\Cref{def:whiteheadspace}) and prove \Cref{thm: equiv-connectivity}.  In \Cref{sec: inf-diameter} we prove \Cref{thm:infinite-diameter} and \Cref{cor:Cay-inf-diameter}.  We dedicate \Cref{sec: nonhyperbolicity} to the proof of \Cref{theorem:nonhyperbolicity} and \Cref{cor:Cay-prim-nonhyperbolicity}. \Cref{sec: homology version} contains a proof of \Cref{connectedhomology} and \Cref{thm:W1-has-finitely-many-components}. Finally, \Cref{sec: appendix} contains the proofs of certain lemmas and propositions for small values of $n$.

\subsection*{Acknowledgments}
The author would like to thank her advisors, Autumn Kent and Marissa Loving, for their invaluable help on this project. She would also like to thank Caglar Uyanik for many helpful conversations. She is grateful to Alex Wright for his comments on \Cref{cor:Cay-prim-nonhyperbolicity}. In particular, Wright pointed out that $\Cay(\free_n, \Csep)$ and $\Cay(\free_n, \curves\prim)$ are quasi-isometric. Alex Wright also pointed her to \cite{sabalka2014geometry}, wherein the authors prove that the edge splitting complex is nonhyperbolic. She also thanks Andrew Putman for pointing her to \cite{bardakov2003palindromic}, in which the authors prove that the primitive width of the free group is infinite. She thanks 
Eleanor Rhoads for helpful conversations. She also thanks Adam Klukowski, Jean Pierre Mutanguha, Nick Salter, and Alex Wright for helpful comments on a draft of this paper. She thanks Edgar Bering for helpful discussions around \Cref{question:which_subroup_stabilizes_edges}. 

This work is supported in part by NSF Grants DMS-2202718, RTG DMS-2230900,  and DMS-1904130.

\section{Background}\label{sec: background} 
Here we cover some prerequisite information for the rest of the paper.  There is information about graphs and free groups in \Cref{sec: graphs and free groups}.  In \Cref{sec: Van Kampen diagrams}, we discuss Van Kampen diagrams and annular diagrams, which are essential tools in \Cref{sec:whiteheadspace} and \Cref{sec: nonhyperbolicity}.  In \Cref{sec: Whitehead's algorithm}, we discuss an algorithm of Whitehead which decides whether $w\in\free_n$ is in a proper free factor of $\free_n.$

\subsection{Graphs and Free Groups} \label{sec: graphs and free groups}
\begin{definition}
    A \emph{directed graph} is a 1-complex given by a tuple $(V, E, \iota, \tau)$ where $V$ is a set of 0-cells called \emph{vertices}, $E$ is a set of 1-cells called \emph{edges},  and $\iota: E\rightarrow V$ and $\tau: E\rightarrow V$ are maps that send $e\in E$ to the endpoints of $e$ in $V$.  For all $e\in E$, $\iota(e)$ will be called the \emph{initial vertex} of $e$ and $\tau(e)$ will be called the \emph{terminal vertex} of $e$.  By assigning initial and terminal vertices of $e$, we assign an orientation to $e$.  We represent the orientation of $e$ by an arrow on $e$ pointing toward $\tau(e)$.  The \emph{valence} of a vertex $v$ is the number of components of $N(v)\setminus \{v\}$, where $N(v)$ is a small neighborhood of $v$.
\end{definition}
\noindent Note that, under our definition of graph, we allow multiple edges with the same initial and terminal vertices. Often we will want to label the edges or vertices with elements of some set. We will denote the label of an edge $e$ or vertex $v$ by $\lab(e)$ and $\lab(v)$, respectively. 

\begin{definition}
    A \emph{rose} is a graph with one vertex and one or more edges.  We denote the rose with $n$ edges by $\rose_n$.  
\end{definition}

Since our definition of the Cayley graph is slightly different than the definition that some authors use, we will define it below. Here and throughout the paper we identify vertices in $\Cay(G, S)$ with the corresponding group elements of $G.$

\begin{definition}\label{def:Cayley_graph}
    The \emph{Cayley graph} $\Cay(G, S)$ of a group $G$ with set $S$ of elements of $G$ is the directed graph whose vertices are in bijection with elements of $G$ and whose edges are labeled by elements of $S$.  There is an edge labeled $s\in S$ from the vertex $v\in\Cay(G, S)$ to the vertex $w\in\Cay(G, S)$ if $vs=w$ in $G$. 
\end{definition}

This definition differs from some other authors' definitions in that we do not require that $S$ be a generating set of $G$, so $\Cay(G, S)$ may not be connected. The graph $\Cay(G, S)$ is connected if and only if $S$ is a generating set for $G$.  

\begin{prop}
    The graph $\Cay(G, S)$ is connected if and only if $S$ generates $G$.
\end{prop}

\begin{proof}
    There is a path from $1$ to $g\in G$ if and only if $g$ is a product of elements of $S$.
\end{proof}

\begin{definition}
    Let $S\subset \free\langle X\rangle$ be a set of elements of the free group on the set $X$.  The \emph{symmetric closure} of $S$, denoted $S^*$, is the set of cyclic permutations of elements of $S$ and their inverses.  For example, if $S=\{abc, b\inv{a}\}$, then $S^*=\{abc, bca, cab, b\inv{a}, \inv{a}b, \inv{(abc)}, \newline \inv{(bca)}, \inv{(cab)}, a\inv{b}, \inv{b}a\}.$ In particular, $S^*$ is finite if $S$ is finite.
\end{definition}

Throughout the paper, we let $\free_n$ be the free group on $n$ generators \[X_n:=\{x_1, x_2, \ldots, x_n\}.\] A \emph{word} on $X_n^*$ is an element of the set \[ \mathcal{W}_n = \{w\in \displaystyle\prod_{i =1}^m X_n^*\mid m<\infty\}.\] Elements of $\free_n$ are equivalence classes of words under \emph{free reduction}.  A word is \emph{freely reduced} if it contains no subword of the form $y\inv{y}$ where $y\in X_n^*$. The freely reduced word $w=y_1y_2\ldots y_m$ where $y_i\in X_n^{*}$ is \emph{cyclically reduced} if $y_1\neq \inv{y_m}$. A \emph{cyclic subword} of a word $w$ is a subword of one of the cyclic permutations of $w$. The length of $w=y_1y_2\ldots y_m$ where $y_i\in X_n^*$ is denoted by $|w|$: in this case, $|w|=m$.  When we refer to the length of $v\in\free_n$, we mean the length of the freely reduced word representing $v$. For an automorphism $\varphi: \free_n\rightarrow\free_n$, there is a natural map $\Tilde{\varphi}: \mathcal{W}_n\rightarrow \mathcal{W}_n$ which extends $\varphi$: for $w=y_1\ldots y_k\in \mathcal{W}_n$ where $y_i\in X_n^*$, let $\Tilde{\varphi}(w)=\varphi(y_1)\varphi(y_2)\ldots\varphi(y_k).$  For example, if $\varphi$ is the map on $\free_2\langle a, b\rangle$ sending $a$ to $ba\inv{b}$ and $b$ to $b$, $\Tilde{\varphi}(ab)=ba\inv{b}b.$

Since $\pi_1(\rose_n) \cong \free_n$, we will assume throughout that the edges of $\rose_n$ are labeled by elements of $X_n$. A cover of $\rose_n$ will inherit this labeling, so that a graph $\Gamma$ is a cover of $\rose_n$ if and only if for every vertex $v$ in $\Gamma$, there is a neighborhood $U$ of $v$ consisting of $v$ and two edge segments incident to $v$ for each $x_i\in X_n$, oriented in opposite directions.  We identify $\pi_1(\rose_n)$ with $\free_n$ and, for $v$ a vertex of $\Gamma$,  we identify $\pi_1(\Gamma, v)$ with the corresponding subgroup of $\free_n$.

Every finite-index subgroup of $\free_n$ is represented by a finite cover of $\rose_n$.  Suppose $G<\free_n$ is a finitely generated subgroup of $\free_n$ that is not finite-index.  Then there is a finite graph called a \emph{core graph} such that every element of $G$ is represented by a closed path in $G$ with no backtracking. In order to define a core graph we need to introduce the notion of folding. 

\begin{definition}
    A \emph{Stallings folding}, or simply \emph{folding}, of a labeled, directed graph is an identification of edges $e_1, e_2$ that satisfy the following:
    \begin{enumerate}
        \item $\iota(e_1)=\iota(e_2)$ or $\tau(e_1)=\tau(e_2);$
        \item $\lab(e_1) = \lab(e_2).$
    \end{enumerate}

    \noindent If a labeled, directed graph has no pairs of edges satisfying the conditions above, we say the graph is \emph{completely folded}.
\end{definition}

See \Cref{fig: folding} for an example of a folding. Note that foldings induce surjections  on the corresponding fundamental groups: see \cite[Corollary 4.4]{stallings1983topology}.  

\begin{figure}
    \centering
    \includegraphics[scale=.7]{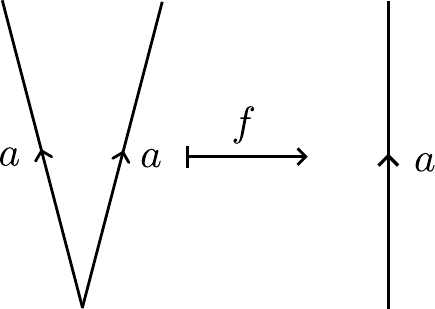}
    \caption{A Folding}
    \label{fig: folding}
\end{figure}

\begin{definition}
A \emph{core graph} is a finite completely folded graph with at most one valence-one vertex.
\end{definition}

To obtain a core graph representing a finitely-generated subgroup $G=\langle w_1, w_2, \ldots, w_k\rangle$ of $\free_n$, take the wedge sum of the $k$ subdivided cycles labeled by the generators of $G$, then completely fold. The wedge point is the basepoint of the graph. See \cite[Section 5.4]{stallings1983topology} for a proof of this fact. An example of obtaining a core graph via complete folding is shown in \Cref{fig: core graph} for $G=\langle ab\inv{a}, [a, b]\rangle$, where  $[a, b]=ab\inv{a}\inv{b}$. We will use this notation for commutators throughout the paper.

    \begin{figure}[ht]
        \centering
        \includegraphics[scale=.6]{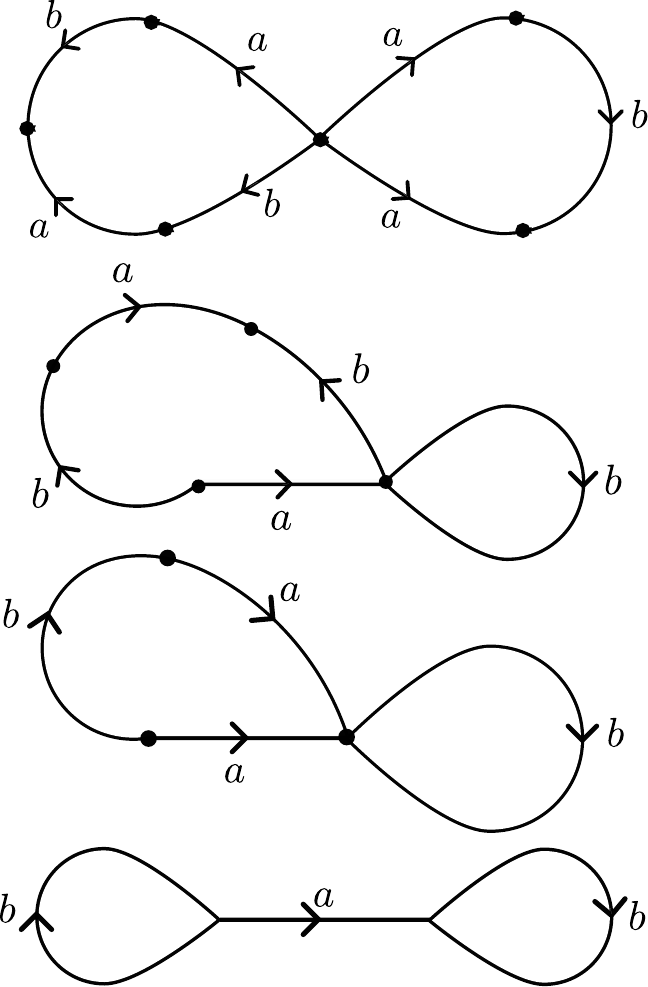}
        \caption{Folding to obtain a core graph for $G=\langle a, b\mid ab\inv{a}, [a, b]\rangle$}
        \label{fig: core graph}
    \end{figure} 

We will introduce the separability complex of the rose $\Wh(\rose_n)$ in \Cref{sec:whiteheadspace} and prove that it admits an isometric action of $\Out(\free_n)$. Thus, we include here a brief discussion of $\Out(\free_n)$ and some of the spaces on which it acts. $\Aut(\free_n)$ is the group of automorphisms of the free group. A theorem of Nielsen (see \cite[Theorem~3.2]{magnus2004combinatorial}) gives a nice set of generators for $\Aut(\free_n).$ 

\begin{thm}[Nielsen] \label{theorem: gens-for-Aut(Fn)}
Let $X_n=\{x_1, \ldots, x_n\}$ be a basis for $\free_n$.
$\mathrm{Aut}(\free_n)$ is finitely generated by the following  \emph{elementary Nielsen transformations}.
\begin{enumerate}
    \item Let $\sigma$ be a permutation of $\{1, 2, \ldots, n\}$ and $\epsilon_i\in\{\pm 1\}$ for $1\leq i\leq n$. A function $N_{\sigma}: \free_n\rightarrow\free_n$ satisfying
    \[
        N_{\sigma}(x_i)= x_{\sigma(i)}^{\epsilon_i}
    \] for all $i$ is an elementary Nielsen transformation.
    \item Let $i$ and $j$ satisfy $1\leq i, j\leq n$ and $i\neq j$. A function $N:\free_n\rightarrow\free_n$ fixing all $x_k$ for $k\neq i$ and sending $x_i$ to an element of $\{x_ix_j^{\epsilon}, x_j^{\epsilon}x_i, x_j^{\epsilon}x_ix_j^{-\epsilon}\}$ where $\epsilon\in\{\pm 1\}$ is also an elementary Nielsen transformation.
\end{enumerate}  
\end{thm}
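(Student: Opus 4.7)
The plan is to take an arbitrary $\varphi \in \Aut(\free_n)$ and reduce the tuple $B_\varphi = (\varphi(x_1), \ldots, \varphi(x_n))$ to the standard basis $(x_1, \ldots, x_n)$ by a finite sequence of the elementary Nielsen transformations listed in the theorem. Since each such transformation is an automorphism, and the composition with $\varphi$ stays in $\Aut(\free_n)$, this sequence expresses $\varphi^{-1}$ as a product of elementary Nielsen transformations; reversing the sequence then writes $\varphi$ itself as such a product. Thus the whole theorem reduces to a statement about generating $n$-tuples of $\free_n$: any ordered free basis of $\free_n$ can be transformed to $(x_1, \ldots, x_n)$ by the moves of type (1) and (2).

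To drive the reduction I would use the total length function $L(w_1, \ldots, w_n) = \sum_{i=1}^n |w_i|$, where $|w_i|$ is the length of the freely reduced word representing $w_i$. A type (2) move replaces one entry $w_i$ by $w_i w_j^{\pm 1}$ (or a left multiple, or a conjugate), and depending on the cancellation between $w_i$ and $w_j^{\pm 1}$ in the free reduction, this can decrease $|w_i|$ strictly. The core claim is that unless $B_\varphi$ is already a signed permutation of the standard basis, one can always find indices $i \neq j$ and a sign $\epsilon \in \{\pm 1\}$ so that $|w_i w_j^\epsilon| < |w_i|$ or $|w_j^\epsilon w_i| < |w_i|$. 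Iterating such a reduction, together with permutations and inversions from type (1), eventually terminates at a tuple that cannot be shortened, since $L$ is a nonnegative integer.

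The main obstacle, and the content of Nielsen's original argument, is showing that a \emph{Nielsen-reduced} generating $n$-tuple for $\free_n$ — that is, one on which no type (2) move decreases any individual word length — must consist precisely of the generators $x_i^{\pm 1}$, one for each $i$, up to permutation. The proof proceeds by the Nielsen cancellation property: if $(w_1, \ldots, w_n)$ is Nielsen-reduced, then in any freely reduced product $w_{i_1}^{\epsilon_1} w_{i_2}^{\epsilon_2} \cdots w_{i_k}^{\epsilon_k}$ with consecutive factors distinct (in the signed sense), strictly more than half of each $w_{i_r}^{\epsilon_r}$ survives the cancellation with its neighbors. Consequently the length of such a product is at least $\max_r |w_{i_r}|$. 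Since every $x_\ell$ is some such product and $|x_\ell| = 1$, we must have $|w_i| = 1$ for all $i$, which forces $B_\varphi$ to be a signed permutation of the standard basis. The delicate part is verifying the cancellation estimate by a careful case analysis of how the initial and terminal segments of the $w_i$ can overlap; this is where I expect the bulk of the work to lie, and it is the step that requires the Nielsen-reducedness hypothesis in its strongest form.
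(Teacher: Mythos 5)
This is a classical theorem that the paper does not prove; it is quoted from \cite[Theorem~3.2]{magnus2004combinatorial}, so there is no in-paper argument to compare against. Your sketch is the standard Nielsen reduction proof, which is essentially the argument in the cited source: reduce the tuple $(\varphi(x_1),\ldots,\varphi(x_n))$ by elementary transformations, and show a reduced generating $n$-tuple must be a signed permutation of the basis via the cancellation property. The outline is correct, with one soft spot worth flagging. The cancellation property you invoke (``each factor in a reduced product leaves a surviving remnant, so the product has length at least $\max_r|w_{i_r}|$'') requires more than the condition you take as the definition of Nielsen-reduced. The condition that no single type (2) move shortens any $|w_i|$ is only the pairwise condition $|w_iw_j^{\pm 1}|\geq |w_i|$; the cancellation lemma also needs the triple condition (that for $|uvw|$ with $u,v,w$ in the symmetrized tuple and $uv\neq 1\neq vw$, the middle factor $v$ is not entirely consumed). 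Achieving that stronger condition requires length-\emph{preserving} moves chosen according to a secondary well-ordering on words, so your termination argument (``$L$ is a nonnegative integer and strictly decreases'') does not by itself get you to a tuple satisfying the hypothesis of the cancellation lemma. This is a standard and entirely fixable refinement, but as written the proof of the key lemma would not go through with only the pairwise reducedness you assume.
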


\begin{remark}\label{remark:one-letter-primitive}
A straightforward corollary of \Cref{theorem: gens-for-Aut(Fn)} is that if the multiset of letters of $w\in\free_n$ contains only one element of $\{x, \inv{x}\}$ for some $x\in X_n$, then $w$ is primitive. To see this, observe that there is a composition of Nielsen automorphisms taking $x$ to $xu,$ where $u\in \free\langle X_n\setminus\{x\}\rangle.$ 

Alternatively, to see that $w$ is primitive, just observe that $xu\cup(X_n\setminus\{x\})$ generates the free group since it generates $x$. Since a basis for $F_n$ consists of $n$ primitive elements, $xu$ is primitive.
\end{remark}

For $n\geq 2$, $\Out(\free_n)$ acts on the \emph{free factor complex} $\ffc_n$ (first introduced by Hatcher and Vogtmann in \cite{hatcher1998complex}) and its 1-skeleton, the \emph{free factor graph}.  A theorem of Bestvina and Feign \cite{bestvina2014hyperbolicity} states that for $n\geq 2$, $\ffc_n$ is Gromov-hyperbolic (see also \cite[Theorem 6.1]{kapovich2014hyperbolicity} and \cite{hilion2017hyperbolicity}).  In addition, $\Out(\free_n)$ acts on the \emph{free bases complex}, denoted $\fbc_n$; by \cite[Proposition 4.3]{kapovich2014hyperbolicity}, $\fbc_n$ is quasi-isometric to $\ffc_n.$  $\Out(\free_n)$ also acts on the \emph{free splitting complex}, which admits a Lipschitz map to the free factor graph \cite[Remark 4.5]{kapovich2014hyperbolicity}.  The \emph{free splitting complex} was first shown to be hyperbolic by Handel and Mosher in \cite{handel2013free}.

\subsection{Van Kampen diagrams}\label{sec: Van Kampen diagrams}
Throughout the paper, we make use of Van Kampen diagrams, first introduced in 1933 by E. R. Van Kampen.  For a more complete discussion, see Chapter V of \cite{lyndon1977combinatorial}. Let $\mathcal{P} = \langle X\mid R\rangle$ be group presentation.  We will assume throughout the paper that elements of the relation set $R$ of a presentation $\mathcal{P}$ are cyclically reduced. 

\begin{definition}
    A \emph{disk diagram} $D$ is a finite, connected, simply-connected planar 2-complex with connected 1-skeleton and at most one vertex of degree one that is required to be on the boundary of $D$. The disks of $D$ are called \emph{regions}, and regions of disk diagrams have immersed boundaries. The \emph{area} of $D$ is the number of regions of $D$.  
\end{definition}
\noindent We note that we assume regions of disk diagrams have immersed boundaries throughout the paper because all disk diagrams in this paper will correspond to Van Kampen diagrams (\Cref{def:Van_Kampen_diagram}), and regions in Van Kampen diagrams can always be taken to have immersed boundaries (see \Cref{remark: disk gluing}).

\begin{definition}\label{def:Van_Kampen_diagram}
    Let $\mathcal{P} = \langle X\mid R\rangle$ be a presentation for a group, and recall we assume elements of $R$ are cyclically reduced.  A \emph{Van Kampen diagram} for $w\in\langle\langle R\rangle\rangle$ over $\mathcal{P}$ is a disk diagram $D$ with oriented edges labeled by elements of $X$. The 1-skeleton of $D$ is a folding of a wedge of finitely many completely folded cycles labeled by conjugates of elements of $R^*$. The basepoint of $D$ is the wedge point of the cycles. Boundaries of regions are attached along cycles labeled by elements of $R^*$ and are therefore immersed (see \Cref{remark: disk gluing}). The boundary of $D$ is labeled by $w$ beginning at the basepoint of $D$ and reading clockwise along the boundary of $D$ in the plane. A Van Kampen diagram $D$ for $w$ over $\mathcal{P}$ is \emph{minimal} if there is no Van Kampen diagram $D'$ for $w$ over $\mathcal{P}$ of smaller area. 
\end{definition}

\noindent Let $G$ be a quotient of $\free_n$, and let $\mathcal{P}=\langle X_n\mid R\rangle$ be a presentation for $G$.  An element $w\in\langle\langle R\rangle\rangle$ has a Van Kampen diagram over $\mathcal{P}$.  The converse is also true: if $w\not\in\langle\langle R\rangle\rangle$, then $w$ has no Van Kampen diagram over $\mathcal{P}$.

\begin{remark}\label{remark: disk gluing}
    Let $\mathcal{P}=\langle X\mid R\rangle$ be a presentation for a group. When constructing Van Kampen or annular diagrams (see \Cref{def:annular-diagram}) over $\mathcal{P}$, we adopt in the definition the convention that the boundaries of disks are glued along cycles labeled by elements of $R^*$ (as opposed to conjugates of elements of $R$). For example, if $D$ is a Van Kampen diagram for $w=s_1s_2\ldots s_k$ where $s_1=aba\inv{c}b^{-2}ab\inv{a}\inv{b}\inv{a}$, the boundary of the region corresponding to $s_1$ in $D$ will be glued along a cycle labeled by $\inv{c}b^{-2}ab$, not $s_1$. A similar convention is taken in the definition of annular diagrams over a presentation (\Cref{def:annular-diagram}).  This implies that the boundaries of regions of a diagram are immersed, as the 1-skeleta of diagrams are folded together in the sense of Stallings.  Gluing disks in this way simplifies certain arguments in \Cref{sec:whiteheadspace} and \Cref{sec: nonhyperbolicity}, which is why we take this convention.
\end{remark}

Let $D(w)$ be a Van Kampen diagram over a presentation $\mathcal{P}$ for a word $w$, and let $cw\inv{c}$ be any conjugate of $w$.  There is a diagram for $cw\inv{c}$ which differs from $D(w)$ only by a change of basepoint and, possibly, the addition or deletion of a dead-end path beginning at some point on the boundary of $D(w)$. 

\begin{definition}
A \emph{Van Kampen diagram} over $\langle X\mid R\rangle$ for a conjugacy class $[w]$ is the 2-complex resulting from taking a Van Kampen diagram for any $\hat{w}\in [w]$, forgetting the basepoint, and deleting all valence-one vertices and the edges incident to valence-one vertices.
\end{definition}

We will also utilize \emph{annular diagrams} in \Cref{sec: inf-diameter} and \Cref{sec: nonhyperbolicity}. The term \emph{annular diagram} can either refer to a $2$-complex described in \Cref{def:annular-diagram} below or a $2$-complex associated to a group presentation, similar to the distinction between \emph{disk diagram} and \emph{Van Kampen diagram}. Annular diagrams over a presentation $\mathcal{P} = \langle X\mid R\rangle$ are similar to Van Kampen diagrams in that they are planar 2-complexes and their regions have boundaries labeled by elements of $R^*$, but they are not simply connected.

\begin{definition}\label{def:annular-diagram}
    An \emph{annular diagram} $A$ is a finite connected planar $2$-complex with connected $1$-skeleton, fundamental group $\Z$, and at most one valence-$1$ vertex which is required to be on the boundary of $A$. The boundaries of disks (or \emph{regions}) of annular diagrams are immersed. The area of an annular diagram is the number of regions of the diagram. We say that an annular diagram has two boundaries (an ``inner boundary" and an ``outer boundary") even though the boundaries of an annular diagram may share edges or vertices (see \Cref{fig:annular-diagrams}). Note that whether one boundary is inside or outside the other depends on how the picture is drawn, not on a mathematically meaningful choice.

    Let $\mathcal{P}=\langle X\mid R\rangle$ be a group presentation, and assume that elements of $R$ are cyclically reduced. An \emph{annular diagram over $\mathcal{P}$} is a $2$-complex as described above with oriented edges labeled by elements of $X$. The boundaries of the regions of annular diagrams over $\mathcal{P}$ are labeled by elements of $R^*$. An annular diagram over $\mathcal{P}$ with boundaries labeled $v$ and $w$ exists only if $w$ is a product of a conjugate of $v$ with conjugates of elements of $R$. Such a diagram may not exist even if $w$ satisfies this condition, however: see \Cref{lemma:existence-annular}.
\end{definition}

\begin{remark}
    Typically we think about diagrams with the path metric given by assigning each edge a length of $1.$ Unlike some authors, we require that diagrams over $\langle X\mid R\rangle$ have edges labeled by generators as opposed to words in the generators. We do this because the proofs of \Cref{thm:infinite-diameter} and \Cref{theorem:nonhyperbolicity} relate the sum of the lengths of edges along the boundaries of diagrams to the lengths of the words labeling the boundaries of diagrams. We relax the condition that edges of diagrams $D$ be labeled by elements of the generating set $X$ in \Cref{def:nice} in a way that maintains the correspondence between the length of the boundary of a diagram $D$ and the length(s) of the freely reduced word(s) labeling the boundary of $D$.
\end{remark}

Much like with Van Kampen diagrams, we will typically forget the basepoint of annular diagrams and delete valence-$1$ vertices and edges incident to valence-$1$ vertices. Then we can refer to the boundaries as being labeled by cyclically reduced elements of conjugacy classes of $\free_n$ as opposed to elements of $\free_n$.

\begin{figure}
    \centering
    \includegraphics[scale=.35]{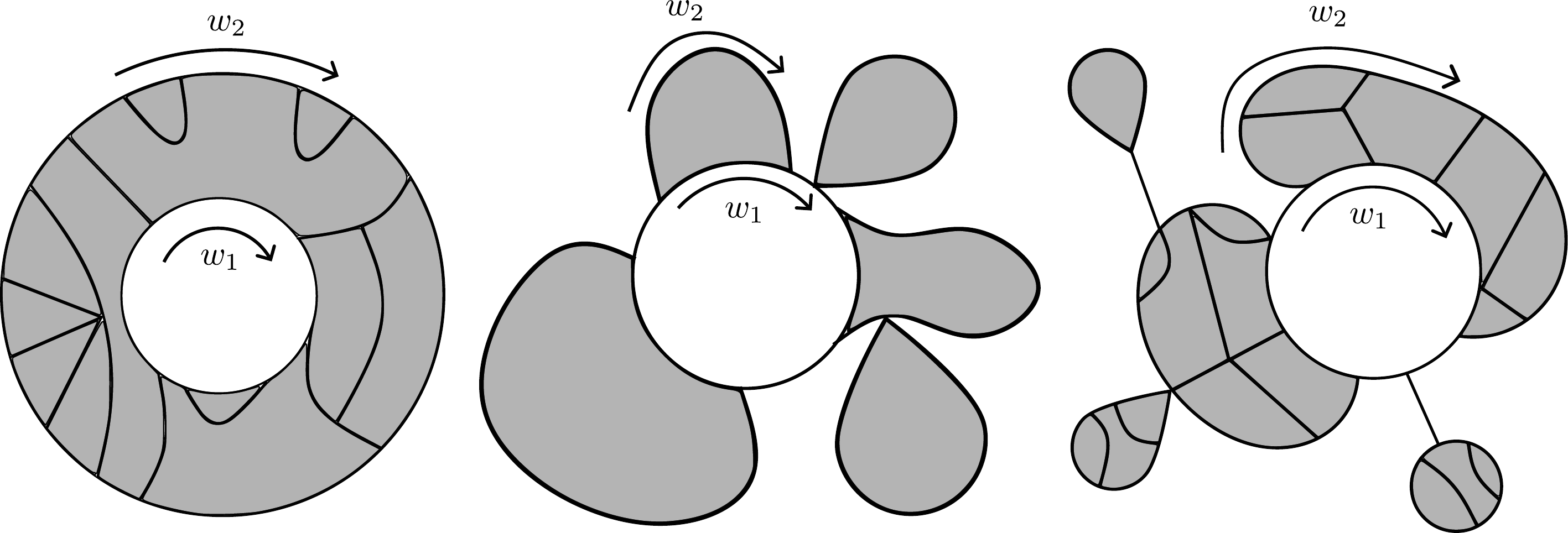}
    \caption{Three annular diagrams with inner boundary $w_1$ and outer boundary $w_2$.  Note that in the middle and right diagrams, the inner and outer boundaries overlap in certain edges and vertices.} 
    \label{fig:annular-diagrams}
\end{figure}

\subsection{Whitehead's algorithm for separable words} \label{sec: Whitehead's algorithm}
A \emph{proper free factor} of $\free_n$ is a nontrivial proper subgroup $F<\free_n$ such that $\free_n$ splits as a free product $\free_n = F*G$ for some nontrivial, proper $G<\free_n$.  We will need to be able to distinguish between words that are in a proper free factor of $\free_n$ and words that are not. 

\begin{definition}[\cite{stallings1999whitehead}] 
     A word in $\free_n$ is \emph{separable} if it belongs to a proper free factor of $\free_n$. Otherwise, $w$ is \emph{inseparable.}
\end{definition}

Some authors refer to these words as \emph{simple}; we use Stallings' terminology in order to avoid confusion with the definition of \emph{simple curves} on a surface. The proof of the following theorem of Stallings relies on ideas of Whitehead in \cite{whitehead1936certain}, hence the name ``Whitehead's algorithm for separable words."

\begin{thm}[\cite{stallings1999whitehead}]
    Given a word $w\in\free_n,$ there is an efficient algorithm to determine whether or not $w$ is separable.
\end{thm}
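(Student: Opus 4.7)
The plan is to build the algorithm around \emph{Whitehead automorphisms}, a finite, explicitly enumerable subset of $\Aut(\free_n)$ that suffices to detect length-decrease in $\Aut(\free_n)$-orbits of cyclically reduced words. Specifically, I would define two types: Type I are the signed permutations of $X_n$ (these already appear in \Cref{theorem: gens-for-Aut(Fn)}), and Type II are the transformations $(A, a)$ where $a \in X_n^*$ and $A \subseteq X_n^*$ contains $a$ but not $\overline{a}$, acting on each $y \in X_n^*$ by conjugating by $a$ if appropriate and then possibly multiplying on one side by $a^{\pm 1}$ depending on whether $y$ and $\overline{y}$ lie in $A$. There are fewer than $(2n)\cdot 2^{2n}$ such automorphisms in total, so checking all of them is a bounded-complexity operation.

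The algorithmic skeleton is then: cyclically reduce $w$; test every Whitehead automorphism $\sigma$ to see whether $|\sigma(w)|_{\mathrm{cyc}} < |w|_{\mathrm{cyc}}$; if one exists, replace $w$ by $\sigma(w)$ (cyclically reduced) and repeat; otherwise declare $w$ to be of \emph{minimal} cyclic length in its $\Aut(\free_n)$-orbit. At this stopping point, I would return \emph{separable} if and only if the resulting word $w^{\min}$ fails to use at least one letter of $X_n^*$ (equivalently, lies in the proper free factor generated by the remaining letters). Termination and efficiency are immediate: each iteration strictly decreases the nonnegative integer $|w|_{\mathrm{cyc}}$, so the procedure halts in at most $|w|$ iterations, each of which does only constantly many (depending on $n$) applications of automorphisms to a word of length $O(|w|)$.

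Correctness reduces to two claims. First, if $w^{\min}$ visibly lies in a proper free factor (some letter unused), then $w$ is separable, since the composition of Whitehead automorphisms used carries $w^{\min}$ back to $w$ and automorphisms preserve the property of lying in some proper free factor. Second, and more substantively, I must show the converse: if $w$ is separable, then the iterative procedure actually reaches such a visibly separable representative. This is the content of \emph{Whitehead's peak-reduction lemma}: for any cyclically reduced $w$ and any $\varphi \in \Aut(\free_n)$ with $|\varphi(w)|_{\mathrm{cyc}} < |w|_{\mathrm{cyc}}$, there is already a single Whitehead automorphism that strictly decreases $|w|_{\mathrm{cyc}}$. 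Granting this lemma, if $w$ is separable, there is an automorphism sending $w$ into a free factor not using some letter, hence decreasing cyclic length whenever $w$ is not already minimal; iterating must terminate at a representative that is itself in a proper free factor, which is then visibly detected.

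The hard part is the peak-reduction lemma. The standard route, which I would follow, is via the \emph{Whitehead graph} $Wh(w)$: the graph on vertex set $X_n^*$ whose edges record which pairs of letters occur adjacently as cyclic subwords of $w$, with multiplicity. A case analysis on whether $Wh(w)$ has a cut vertex shows that a Type~II Whitehead automorphism built from a ``side'' of such a cut vertex strictly reduces $|w|_{\mathrm{cyc}}$ whenever any automorphism does; the absence of a cut vertex in $Wh(w^{\min})$ is precisely what certifies $w^{\min}$ cannot lie in a proper free factor unless it is already manifestly supported on a proper subset of $X_n^*$. This combinatorial analysis, originally due to Whitehead and streamlined by Stallings in \cite{stallings1999whitehead}, is where all the real work lives; once it is in hand, the algorithm and its efficiency follow formally from the setup above.
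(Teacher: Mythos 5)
Your proposal is correct in outline, but it is the classical length-minimization form of Whitehead's algorithm rather than the cut-vertex-driven form the paper actually records (the theorem is quoted from Stallings, and the paper's ``proof'' is the procedure of \Cref{theorem: Whitehead algorithm} together with pointers to Stallings' Propositions 2.2, 2.3 and Theorem 2.4). The two routes differ in a way worth noting. The paper's version never searches over all $(2n)\cdot 2^{2n}$ Whitehead automorphisms: it builds $\Omega(w)$, declares $w$ separable the moment $\Omega(w)$ is disconnected, declares $w$ inseparable when $\Omega(w)$ is connected with no cut vertex, and otherwise applies the single automorphism $\phi_{x, Y, Z}$ read off from a cut vertex $x$, which Stallings shows strictly shortens $w$. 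Its correctness therefore rests only on those three cited facts and not on the full peak-reduction theorem, and it can return ``separable'' long before reaching a minimal-length representative. Your version buys a cleaner stopping criterion (global minimality of cyclic length, which also solves the broader orbit problem) at the cost of invoking peak reduction and of a brute-force search over all Whitehead automorphisms at every step.

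There is one genuine, though fillable, gap in your correctness argument. At the stopping point you output ``separable'' if and only if $w^{\min}$ omits a letter, and you justify the converse direction via the cut-vertex lemma, which only covers the case that $\Omega(w^{\min})$ is connected. But a separable word can have a disconnected Whitehead graph while using every generator: for $x_1x_2\in\free_2$ the components are $\{x_1, \overline{x_2}\}$ and $\{x_2, \overline{x_1}\}$. If such a word could be length-minimal, your final test would return a false negative on a separable input. You must rule this out: if some component $A$ of $\Omega(w)$ is not inverse-closed, choose $a\in A$ with $\overline{a}\notin A$ and $a^{\pm 1}$ occurring in $w$; since no edges of $\Omega(w)$ leave $A$, the Higgins--Lyndon length-change formula gives $|(A, a)\cdot w|_{\mathrm{cyc}} - |w|_{\mathrm{cyc}} = -\deg(a) < 0$, so $w$ was not minimal. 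Hence at minimal length every component is inverse-closed, which forces all letters actually occurring in $w^{\min}$ into a single component, so disconnectedness of $\Omega(w^{\min})$ really does force an omitted generator. With that lemma supplied, your algorithm is correct.
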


\noindent In fact, Algorithm 2.5 of \cite{stallings1999whitehead} determines whether finite sets of words $S\subset\free_n$ are separable (i.e., whether all elements of $S$ are conjugate into the factors of any splitting $F* G=\free_n$). We will only need the result for words in $\free_n$, so our definitions and algorithm only refer to separability of a single word. The algorithm to determine separability of $w$ constructs a graph called the \emph{Whitehead graph} of $w$, denoted $\Omega(w)$. Recall that, for a cyclically reduced word $w$, a cyclic subword of $w$ is a subword of one of the cyclic permutations of $w$.

\begin{definition}[Whitehead graph of a word]
Let $w\in\free_n$, where $\free_n$ is generated by $X_n$.  The Whitehead graph of $w$, denoted $\Omega(w)$, is a directed graph constructed as follows:
\begin{itemize}
    \item The vertex set of $\Omega(w)$ is $X_n^*$;
    \item There is one edge from $x$ to $y$ for each distinct copy of the cyclic subword $x\inv{y}$ in $w$; thus the edge set $E$ of $\Gamma$ is in bijection with the multiset of length two cyclic subwords of $w.$
\end{itemize}
\end{definition}
Equivalently, we can define the Whitehead graph as follows.  Let $M$ be the connected sum of $n$ copies of $\mathbb S^1\times \mathbb S^2$.  The fundamental group of $M$ is $\free_n$, and the generators of $\pi_1(M)$ are represented by a system of $n$ disjoint 2-spheres $\{S_1, S_2, \ldots, S_n\}\subset M$. Each sphere has a positive side $S_i^+$ and a negative side $S_i^-$.  The element $x_i\in\free_n$ is represented by a loop beginning at the basepoint of $M$ which travels to $S_i^-$, pierces $S_i$, and returns to the basepoint from $S_i^+$. Here we choose the system of spheres so that no sphere intersects the basepoint of $M$. The advantage of considering curves in $M$ instead of $\rose_n$ is that every $w\in\free_n$ can be represented by an embedding of $\mathbb S^1$ in $M,$ whereas in $\rose_n$ this is impossible.  Cutting $M$ along the sphere system results in a handlebody $\Hat{M}$ with $2n$ spheres on its boundary.  Let $w\in\free_n$, and represent $w$ as a geodesic curve $c\in M$.  The Whitehead graph of $w$ is the graph whose edges are the arcs of $c$ in $\Hat{M}$ and whose vertices are the $2n$ boundary spheres of $\Hat{M}$.  The orientations of the edges in the Whitehead graph of $w$ are given by the orientations of the arcs of $c$ in $\Hat{M}$. An example of the Whitehead graph for a specific word in $\free_n$ is shown in \Cref{fig: Whitehead graph}.

    \begin{figure}
        \centering
        \includegraphics[scale=.65]{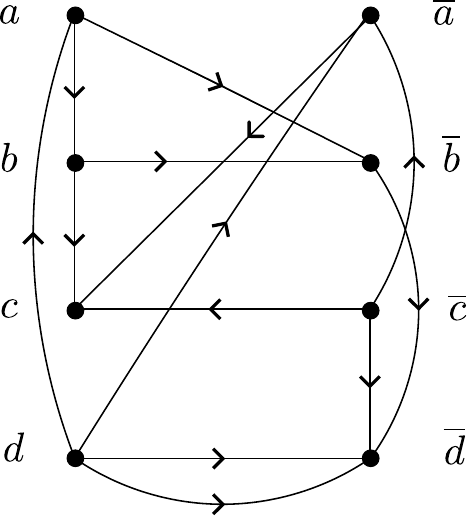}
        \caption{The Whitehead graph of $ab^2\inv{c}da\inv{b}d^3\inv{(c^2a)}\in \free_4$.}
        \label{fig: Whitehead graph}
    \end{figure}

\begin{definition}
    A vertex $v$ in a graph $\Gamma$ is a \emph{cut vertex} if, after removing $v$ and all edges incident to $v$, the resulting graph is disconnected.
\end{definition}

Whitehead's algorithm to decide separability of $w$ begins by constructing $\Omega(w)$.  If $\Omega(w)$ is connected with no cut vertices, $w$ is inseparable, and if $\Omega(w)$ is disconnected, $w$ is separable.  In the remaining case, $\Omega(w)$ is connected and has a cut vertex $x$.  In this case, we apply a particular automorphism to $w$ and reconstruct $\Omega(w)$.  The type of automorphisms used in the algorithm are called \emph{Whitehead automorphisms of the second kind.}

\begin{definition}
A \emph{Whitehead automorphism of the second kind} is an element $\phi\in \Aut(\free_n)$ satisfying the property that there is some fixed $x\in X_n^*$ called the \emph{multiplier} such that for $a\in X_n^*,$ $\phi(a)\in\{a, xa, a\inv{x}, xa\inv{x}\}$.
\end{definition}
\noindent Note that Whitehead automorphisms of the \emph{first} kind are the elements of $\Aut(\free_n)$ that permute the generators and their inverses.  The set of Whitehead automorphisms of the first and second kinds generates $\Aut(\free_n)$ as it contains the elementary Nielsen transformations.

Given a choice of $x\in X_n^*$ and a partition $Y\bigsqcup Z$ of $X_n^*$ into two nonempty subsets such that $x\in Y$ and $\inv{x}\in Z$, we define a Whitehead automorphism $\varphi_{x, Y, Z}$ as follows:

\[   \varphi_{x, Y, Z}(y) = 
     \begin{cases}
       y &\quad\text{if } x=y \text{ or } \{y, \inv{y}\} \subset Y; \\
        xy &\quad\text{if } y \in Y \text{ and } \inv{y} \in Z; \\
       y\inv{x} &\quad\text{if } y\in Z \text{ and }\inv{y}\in Y; \\
       xy\inv{x} &\quad\text{if } y, \inv{y}\in Z.\\ 
     \end{cases}
\]

\begin{thm}[\cite{stallings1999whitehead}]\label{theorem: Whitehead algorithm}
    The following process will determine whether $w\in\free_n$ is separable in finite time.
    \begin{enumerate}
        \item Construct $\Omega(w)$.
        \item If $\Omega(w)$ is disconnected, $w$ is separable. \cite[Proposition 2.2]{stallings1999whitehead}
        \item Else if $\Omega(w)$ is connected with no cut vertices, $w$ is inseparable. \cite[Theorem 2.4]{stallings1999whitehead}
        \item Else $\Omega(w)$ is connected and has a cut vertex $x\in X_n^*$.  Partition $X_n^*$ into two disjoint subsets $Y, Z$ as follows:  let $\Omega'$ be the graph resulting from the removal of $x$ and all edges incident to $x$.  Let $Z$ be the vertices in the component of $\Omega'$ containing $\inv{x}$, and let $Y = X_n^*\setminus Z$.  Return to Step 1 and repeat the process for $\varphi_{x, Y, Z}(w);$ eventually, this process terminates \cite[Proposition 2.3]{stallings1999whitehead}.
    \end{enumerate}
\end{thm}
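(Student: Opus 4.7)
The plan is to prove the three sub-claims that underpin the algorithm: correctness of the \emph{separable} verdict in step~(2), correctness of the \emph{inseparable} verdict in step~(3), and termination of the iteration in step~(4). All three are classical results of Stallings, so the plan is really to reassemble them, but each warrants its own argument.

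For step~(2), I would argue directly from the Whitehead graph. If $\Omega(w)$ is disconnected, its vertex set $X_n^*$ splits as a disjoint union $V_1 \sqcup V_2$ with no edges across the partition. After applying a suitable Whitehead automorphism of the second kind (which preserves separability), I would reduce to the case in which each pair $\{x_i, \overline{x_i}\}$ lies entirely in one $V_j$. Since edges of $\Omega(w)$ encode all length-two cyclic subwords of $w$, the cyclically reduced word $w$ then uses letters from only one side, and the subgroup generated by the letters on that side is a proper free factor containing $w$.

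For step~(3) and termination of step~(4), I would invoke Whitehead's peak reduction principle: any separable $w$ admits a Whitehead automorphism of the second kind that strictly decreases its cyclically reduced length. The heart of the argument is a local count at the vertex $x$ of $\Omega(w)$ showing that, for the automorphism $\phi_{x, Y, Z}$, the difference $|w| - |\phi_{x, Y, Z}(w)|$ can be expressed in terms of edges of $\Omega(w)$ crossing versus lying within the partition $(Y \setminus \{x\},\, Z \setminus \{\overline x\})$. When $\Omega(w)$ is connected with no cut vertex, this quantity is nonpositive for every choice of $(x, Y, Z)$, so no Whitehead move shortens $w$; by peak reduction $w$ must be inseparable, yielding step~(3). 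When $x$ is a cut vertex and $(Y, Z)$ is the partition prescribed in step~(4), the same count yields the strict inequality $|\phi_{x, Y, Z}(w)| < |w|$, and the algorithm must terminate in at most $|w|$ iterations.

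The main obstacle will be the local counting lemma underpinning both step~(3) and step~(4). One must track carefully how $|\phi_{x, Y, Z}(w)|$ differs from $|w|$: each letter $y$ of $w$ contributes between zero and two additional symbols depending on the membership of $y$ and $\overline y$ in $Y$ or $Z$, and adjacent letters can create or avoid cancellations with the inserted occurrences of $x^{\pm 1}$. Expressing the net change as a function of the link of $x$ in $\Omega(w)$ is the delicate combinatorial step; once this expression is in hand, connectivity of $\Omega(w) \setminus \{x\}$ translates directly into nonpositivity, and the presence of a cut vertex into strict negativity.
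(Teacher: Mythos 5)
The paper does not prove this theorem: it is imported verbatim from Stallings, with each of steps (2)--(4) attributed to a specific result of \cite{stallings1999whitehead}, and the text immediately following the statement says only that the proof can be found there. So the comparison is really between your outline and Stallings' argument. These are genuinely different routes. Your plan is the classical combinatorial one (Whitehead, Higgins--Lyndon, Lyndon--Schupp): a counting formula expressing $|\phi_{x,Y,Z}(w)|-|w|$ in terms of edges of $\Omega(w)$ crossing the partition versus the valence of $x$, plus peak reduction. Stallings' cited proof is topological: he realizes $w$ as an embedded curve in the connected sum of $n$ copies of $\mathbb S^1\times\mathbb S^2$ and does surgery on sphere systems, which is exactly the model the paper itself invokes when defining $\Omega(w)$. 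The combinatorial route is more self-contained at the level of words; the topological route makes the cut-vertex lemma (step (3)) almost visible and is why the paper's definition of the Whitehead graph is phrased the way it is.

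Two places in your plan are thinner than you acknowledge. First, in step (2), a single Whitehead automorphism of the second kind does not in general suffice to arrange that every pair $\{x_i,\overline{x_i}\}$ lies in one component of the disconnection: when a pair is split across $V_1\sqcup V_2$, the associated move may merely not increase length while reducing the number of split pairs, so one needs an induction (on length, or on the number of split pairs) through several moves. Second, and more seriously, the inference ``no Whitehead move shortens $w$, hence by peak reduction $w$ is inseparable'' hides the real content: peak reduction only tells you that a separable $w$ of minimal length in its orbit is joined to a word omitting a generator by \emph{length-preserving} moves, so you must additionally show that a nontrivial length-non-increasing (not just length-decreasing) Whitehead move forces $\Omega(w)$ to be disconnected or to have a cut vertex. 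That lemma, together with peak reduction itself, is essentially the whole theorem; as written your outline treats it as a routine consequence of the counting formula, which it is not. The termination claim in step (4) is fine: with $Z$ the component of $\overline{x}$ in $\Omega(w)\setminus\{x\}$ there are no edges from $Y\setminus\{x\}$ to $Z$, so the counting formula gives a strict decrease in length and the process stops after at most $|w|$ iterations.
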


\noindent A proof that this algorithm works can be found in \cite{stallings1999whitehead}.  

\begin{example}
    We will use Whitehead's algorithm to show $w = a\inv{b}cb\inv{a}\,\inv{c}a$ is separable in $\free\langle a, b, c\rangle.$  See \Cref{fig:Whitehead's Algorithm}. Observe that $a$ is a cut vertex. The partition of the vertices is given by $\{a, b\}\bigsqcup\{\inv{a}, \inv{b}, c, \inv{c}\}.$ The automorphism associated with this partition is below.
    \[\phi(x) = \begin{cases}
        a & x=a\\
        ab & x=b\\
        ac\inv{a} & x=c
    \end{cases}\]
    Apply $\phi$ to $w$ to obtain $\phi(w) = a\inv{b}cb\inv{c}.$  Observe that $w$ is primitive as it contains only one copy of $a$ or its inverse, so $w$ is separable.  
   
    \begin{figure}
        \centering
        \includegraphics[scale=.5]{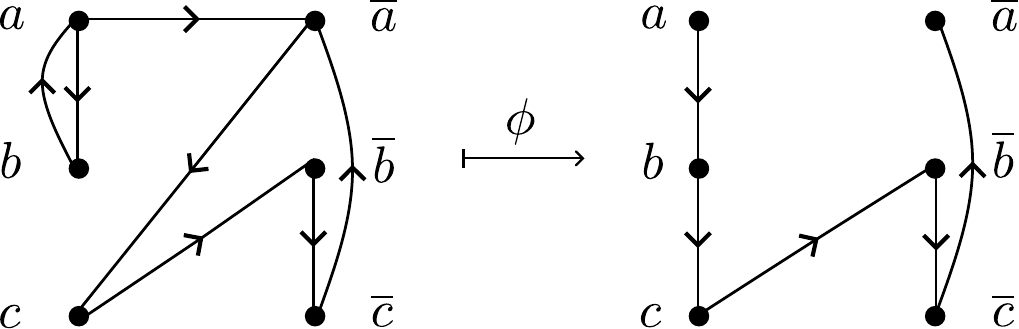}
        \caption{On the left is the Whitehead graph for $w=a\inv{b}cb\inv{a}\,\inv{c}a$; on the right, the Whitehead graph for $\phi(w)=a\inv{b}cb\inv{c}$.}
        \label{fig:Whitehead's Algorithm}
    \end{figure}

\end{example}

\begin{definition}
The \emph{profile} of a Whitehead graph $\Omega$ is an undirected finite graph $\sigma(\Omega)$ which is the image of $\Omega$ under the graph surjection $\sigma$ which takes all directed edges of $\Omega$ between vertices $x$ and $y$ to a single undirected edge between $x$ and $y$. 
\end{definition}

See \Cref{fig:Whitehead profile} for an example of a Whitehead profile. The advantage of considering the profile of a Whitehead graph is that there are finitely many profiles of Whitehead graphs of curves $\gamma$ in $\Gamma$, and one can determine from the Whitehead profile whether or not the Whitehead graph has a cut vertex.  Showing that the Whitehead profile of $w\in\free_n$ has no cut vertex is a quick way to ensure $w$ is inseparable.

\begin{figure}
    \centering
    \includegraphics[scale = .6]{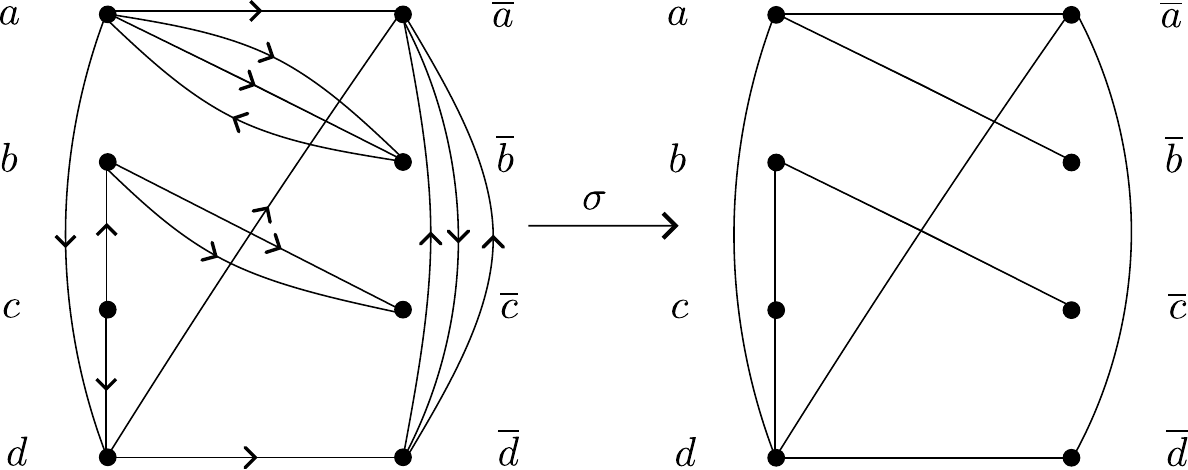}
    \caption{On the left, the Whitehead graph of $a^2bc\inv{d}a\inv{d}abc\inv{b}\inv{a}d^2$; on the right, its Whitehead profile}
    \label{fig:Whitehead profile}
\end{figure}

Another variation of the Whitehead graph that is sometimes useful is the \emph{labeled Whitehead graph}, defined below.

\begin{definition}\label{def:labeled-Wh-graph}
    The \emph{labeled Whitehead graph} of a cyclically reduced $w\in\free_n$ is the Whitehead graph $\Omega(w)$ together with a labeling of the edges of $\Omega(w)$ by $\Z\big/ m\Z$, where $m$ is the length of $w$. The edges of $\Omega(w)$ are labeled in the order that the corresponding length-two cyclic subwords of $w$ appear in $w$. Two labeled Whitehead graphs $\Omega$ and $\Psi$ are equivalent whenever $\Omega$ and $\Psi$ are isometric via an isometry that preserves vertex labels, and the edge labels of $\Omega$ and $\Psi$ differ only by a fixed element of $\Z\big/ m\Z$. See \Cref{fig:equiv labeled Whitehead graphs} for an example.  
\end{definition} \noindent Equivalence classes of labeled Whitehead graphs are in one-to-one correspondence with conjugacy classes of elements of the free group. 

\begin{figure}
    \centering
    \includegraphics[scale = .6]{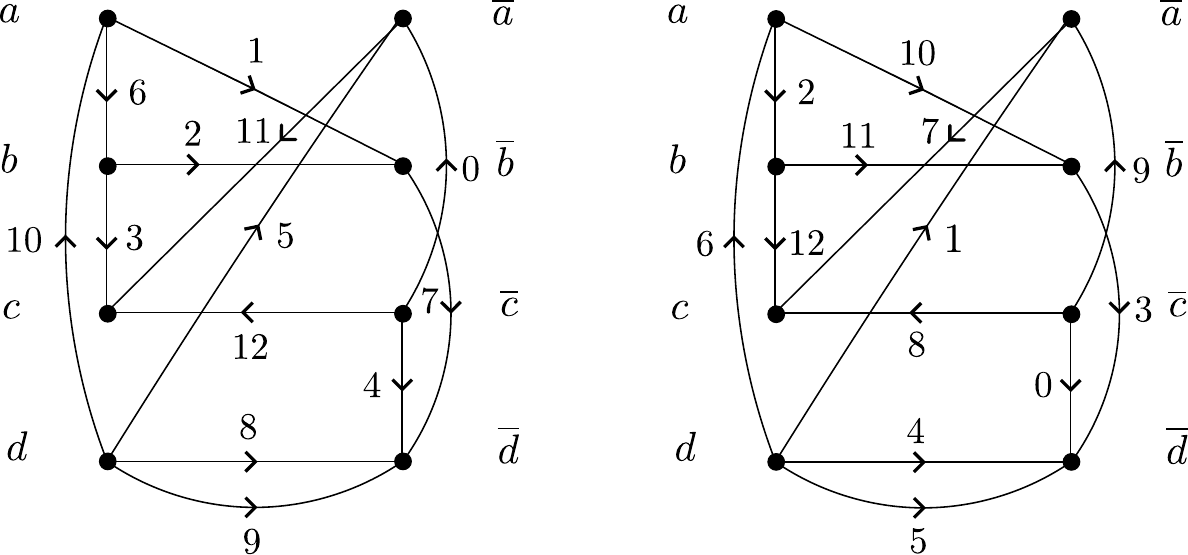}
    \caption{Equivalent labeled Whitehead graphs of $ab^2\inv{c}da\inv{b}d^3\inv{(c^2a)}$}
    \label{fig:equiv labeled Whitehead graphs}
\end{figure}

\section{The separability complex}
\label{sec:whiteheadspace}

The purpose of this section is to introduce the separability complex and describe some of its properties. The separability complex is a tool to study finite-index normal subgroups of $\free_n$ (equivalently, finite regular covers $\Gamma$ of the rose). Let $N$ be a finite-index normal subgroup of $\free_n$ and $\Gamma$ the cover of $\rose_n$ with fundamental group $N$. A key property of $\Wh(\Gamma)$ is that it is connected if and only if $\pi_1(\Gamma)$ is generated by $\Csep\cap\pi_1(\Gamma)$ (see \Cref{thm: equiv-connectivity}). Throughout the paper, $\Gamma$ is a finite-index regular cover of $\rose_n$ for $n\geq 2$, and $\Gamma\sep$ is the cover of $\rose_n$ representing the subgroup of $\pi_1(\Gamma)$ generated by elements of $\Csep\cap\pi_1(\Gamma)$. See \Cref{fig: piece of Wh space} for a small subgraph of the separability complex of a specific cover of $\rose_n$.  

\begin{definition}[separability complex] \label{def:whiteheadspace}
Let $\Gamma$ be a finite regular cover of $\mathbf{R}_n$ for some $n\geq 2$.  The separability complex of $\Gamma$, denoted $\Wh(\Gamma)$, is a locally infinite directed graph constructed as follows:

\begin{itemize}
    \item The vertex set $V$ of $\Wh(\Gamma)$ is the set of equivalence classes of elements of $\pi_1(\Gamma)$ under conjugation by $\free_n$;
    \item Each edge is labeled by the (free) conjugacy class of a nontrivial element of $\Csep\cap\pi_1(\Gamma)$.  There is an edge $[\alpha]$ from $[v]$ to $[w]$ whenever there are $\hat{\alpha}\in[\alpha],$ $\hat{w}\in [w],$ $\hat{v}\in[v]$ such that $\hat{w}=\hat{v}\hat{\alpha}$ in $\free_n$.
\end{itemize}
\end{definition} 
We will generally use Latin letters in square brackets to refer to vertices in $\Wh(\Gamma)$ and Greek letters in square brackets to denote edge labels in $\Wh(\Gamma)$. Note that we can take $\hat{w}$ to be any representative of $[w]$ by conjugating the equation $\hat{w}=\hat{v}\hat{\alpha}$.

\begin{remark}
    The complex $\Wh(\rose_n)$ is very similar to $\Cay(\free_n, \Csep)/\Inn(\free_n)$. There is an action of $\Inn(\free_n)$ on $\Cay(\free_n, \Csep)$ because the property of being separable is preserved by $\Aut(\free_n)$. Note that $\Cay(\free_n, \Csep)/\Inn(\free_n)$ surjects $\Wh(\Gamma),$ and this surjection preserves distances between vertices. To see this, note that the vertices in both complexes are labeled identically. The surjection takes the vertex labeled $[v]\in \Cay(\free_n, \Csep)/\Inn(\free_n)$ to the vertex labeled $[v]\in\Wh(\Gamma).$ Let $v$ and $w$ be adjacent vertices of $\Cay(\free_n, \Csep)$.  We identify $v$ and $w$ with the elements of $\free_n$ they represent. Suppose that $\alpha\in\Csep$ such that $v\alpha=w$. The quotient of $\Cay(\free_n, \Csep)$ by $\Inn(\free_n)$ identifies, for all $c\in\free_n$, edges labeled $c\alpha\inv{c}$ from $cv\inv{c}\in\Cay(\free_n, \Csep)$ to $cw\inv{c}\in\Cay(\free_n, \Csep)$.  Edges labeled $[\alpha]$ from $[v]$ to $[w]$ in $\Cay(\free_n, \Csep)/\Inn(\free_n)$ therefore correspond to sets of equations $\{cv\alpha\inv{c}=cw\inv{c}\mid c\in\free_n\}$. The surjection from $\Cay(\free_n, \Csep)/\Inn(\free_n)$ to $\Wh(\rose_n)$ takes every edge labeled $[\alpha]$ from $[v]$ to $[w]$ in $\Cay(\free_n, \Csep)/\Inn(\free_n)$ corresponding to multiplication of a representative of $[v]$ by a representative of $[\alpha]\subset\free_n$ to the single edge from $[v]$ to $[w]$ in $\Wh(\Gamma)$ labeled $[\alpha]$. Thus the large-scale geometry of $\Wh(\Gamma)$ is identical to that of $\Cay(\free_n, \Csep)/\Inn(\free_n).$
    
    However, the quotient from $\Cay(\free_n, \Csep)$ to $\Wh(\free_n)$ has nontrivial kernel.  An example of an equation $[v][\alpha]=[w]$ as described above with multiple solutions up to conjugacy is described in \Cref{example:Cay_quotient_nontrivial_kernel}.
\end{remark}

\begin{example}\label{example:Cay_quotient_nontrivial_kernel}
    Consider the equation on free conjugacy classes $[a][b]=[abab^{-1}a^{-1}baba^{-1}b^{-1}].$ The following two equations in the free group are solutions to this equation. Here $a^c=cac^{-1}$ and $[x, y]=[xyx^{-1}{y^-1}].$
    \begin{equation}
        a\cdot b^{[b, a]}=abab^{-1}a^{-1}baba^{-1}b^{-1}
    \end{equation}
    \begin{equation}
        a^{ab}\cdot b^{ba}=abab^{-1}a^{-1}baba^{-1}b^{-1}
    \end{equation}
    Note that there are two corresponding edges labeled $[b]$ from $[a]$ to $[abab^{-1}a^{-1}baba^{-1}b^{-1}]$ in $\Cay(\free_n, \Csep)/\Inn(\free_n)$ which are identified in $\Wh(\free_n)$.
\end{example}

\noindent A version of the following question was asked in a personal communication with the author.
\begin{question}\label{question:which_subroup_stabilizes_edges}\cite{Bering2026}
    Given an edge $e$ labeled $[\alpha]$ from $[v]$ to $[w]$ in $\Cay(\free_n, \Csep)/\Inn(\free_n)$, what is the subgroup of $\Out(\free_n)$ preserving the label and endpoints of $e$?
\end{question}

\begin{figure}
    \centering
    \includegraphics[scale=.6]{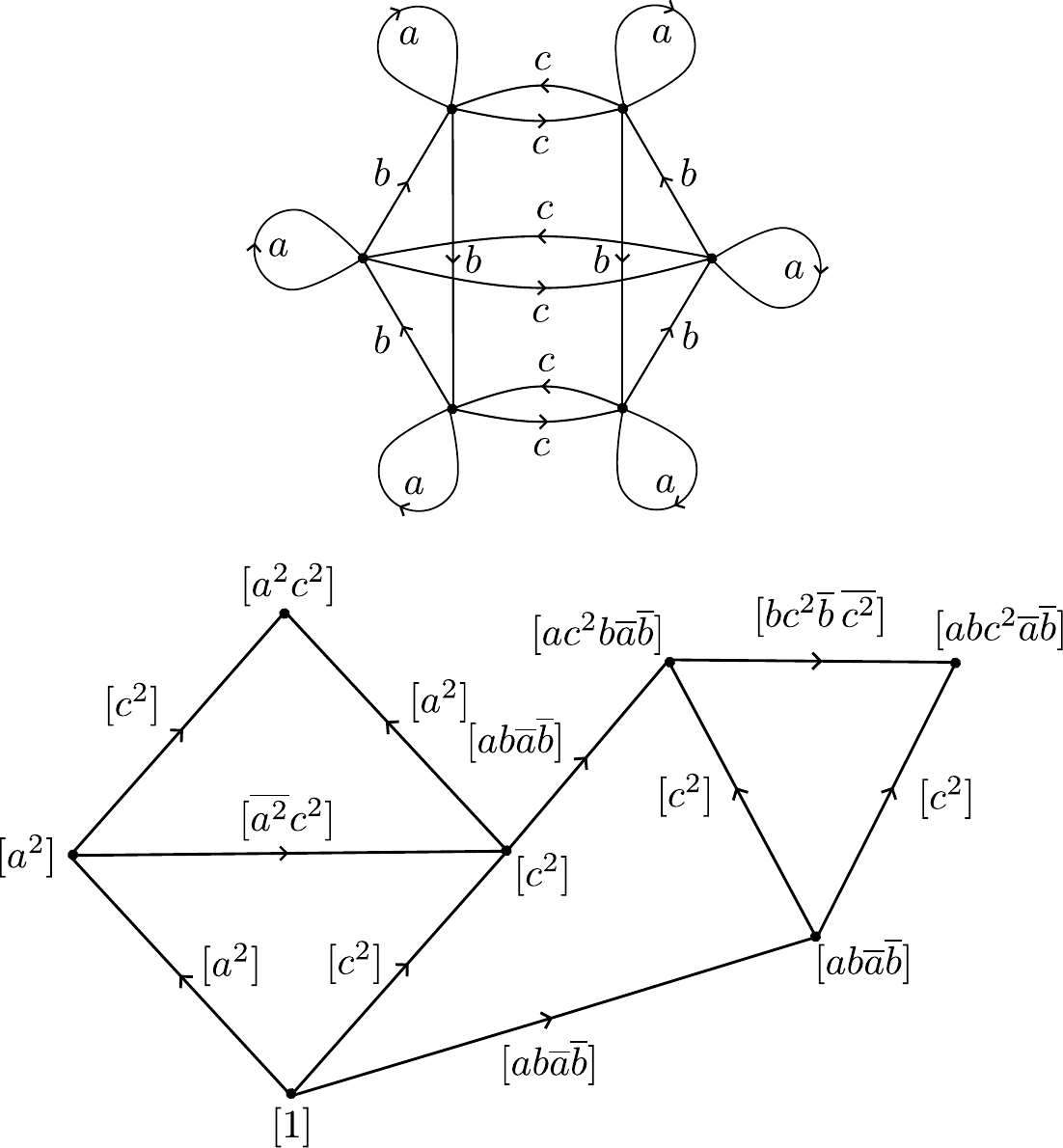}
    \caption{Above, a finite regular cover $\Gamma$ of $\rose_3$; below, a small portion of $\Wh(\Gamma)$ to illustrate edge relations.}
    \label{fig: piece of Wh space}
\end{figure}

Every vertex in $\Wh(\Gamma)$ is incident to at least one edge for each conjugacy class of nontrivial curves in $\Csep\cap\pi_1(\Gamma)$, so $\Wh(\Gamma)$ is locally infinite. We think of $\Wh(\Gamma)$ as a metric space with the induced path metric: every edge of $\Wh(\Gamma)$ has length one, and the distance between $[v]$ and $[w]\in\Wh(\Gamma)$ is the length of the shortest path between $[v]$ and $[w]$. If no path between $[v]$ and $[w]$ exists, the distance between them is infinite. The distance between $[v]$ and $[w]$ is denoted $\dsep{[v]}{[w]}$.  The distance between $[1]$ and $[w]\in\Wh(\Gamma)$ is called the \emph{separable length} of $[w]$ (or the separable length of $w$) and denoted by $\seplength{w}$.

Consider the set of paths in $\Wh(\Gamma)$, denoted $\pathwh(\Gamma)$.  Let $p([v], [w]) \in\pathwh(\Gamma)$ have edges labeled by $[\alpha_1], [\alpha_2], \ldots, [\alpha_{l(p)}]$, where $l(p)$ is the length of $p$.  Then $p$ corresponds to an expression for $\hat{w}\in[w]$ as a product of a representative $\hat{v}$ of $[v]$ and elements $\hat{\alpha_1}, \hat{\alpha_2}, \ldots \hat{\alpha_{l(p)}}$ of the separable conjugacy classes labeling the edges of $p$. For any path $p([1], [w])\in\pathwh(\Gamma)$, there is an associated Van Kampen diagram $D_p$ for $[w]$ over $\langle X_n\mid \Csep\cap\pi_1(\Gamma)\rangle.$ For the convenience of the reader, we summarize this construction below in \Cref{lemma:paths=diagrams}. This construction is also described in the proof of Theorem 1.1 in Chapter V of \cite{lyndon1977combinatorial} and pages 40-41 of \cite{epstein1992word}.

Recall that our definition of Van Kampen diagram (\Cref{def:Van_Kampen_diagram}) requires regions with immersed boundaries, and therefore the boundaries of regions of Van Kampen diagrams are assumed throughout the paper to be labeled by cyclically reduced words.
\begin{lemma}\label{lemma:paths=diagrams}
If $p([1], [w])\in\pathwh(\Gamma)$, there is a Van Kampen diagram $D_p$ for $[w]$ over $\langle X_n\mid \Csep\cap\pi_1(\Gamma)\rangle.$ The regions of $D_p$ have boundaries labeled by (cyclically reduced) representatives of some of the conjugacy classes labeling the edges of $p$. If $p$ is a path of minimal length, the regions of $D_p$ have boundaries labeled by all of the (cyclically reduced) representatives of the conjugacy classes labeling the edges of $p.$
\end{lemma}

\begin{proof}
Let the edges of $p([1], [w])$ be $[\alpha_1], [\alpha_2], \ldots, [\alpha_l]$ where $l$ is the length of $p([1], [w]).$  By the definition of $\Wh(\Gamma)$, there are $\hat{\alpha_i}\in [\alpha_i]$ so that $\hat{\alpha_1}\hat{\alpha_2}\ldots\hat{\alpha_l} =\hat{w}\in[w]$.  To construct a diagram for $[w]$, wedge disks with the same orientation and boundary labels $\hat{\alpha_1}, \hat{\alpha_2}, \ldots, \hat{\alpha_l}$ in clockwise order at a point to obtain a 2-complex $D$.  The complex $D$ has boundary which freely reduces to a conjugate of $w\in\free_n$ when read clockwise beginning at the wedge point. Then, to make the boundary of $D_p$ freely reduced, fold adjacent edges in the clockwise ordering on the boundary of $D$ in the sense of Stallings so that the boundary of the resulting complex is labeled by $w$ when read in clockwise order beginning at the basepoint. If, when folding, a disk subcomplex $T$ arises which has boundary label freely reducing to $1$, delete the interior of $T$ and fold the edges on the boundary of $T$ together. Note that this will happen only if the product of the relations on the interior of $T$ freely reduces to $1$, which implies that $[\alpha_1], [\alpha_2], \ldots, [\alpha_l]$ is not a path in of minimal length from $1$ to $[w]$ in $\Wh(\Gamma)$ as the relations labeling regions on the interior of $T$ could be eliminated without changing the product. Finally, delete all valence-one vertices to get a diagram for $[w].$  See \Cref{fig:paths-diagrams} for an example of the diagram corresponding to a specific path in the separability complex.
\end{proof} 

\begin{remark}\label{rmk:minimalpaths}
    A key takeaway of \Cref{lemma:paths=diagrams} is that whenever a path $p([1], [w])\in\Wh(\Gamma)$ is of minimal length in $\Wh(\Gamma)$, the length of $p$ is the area of the corresponding diagram.
\end{remark}

\begin{figure}
    \centering
    \includegraphics[scale=.55]{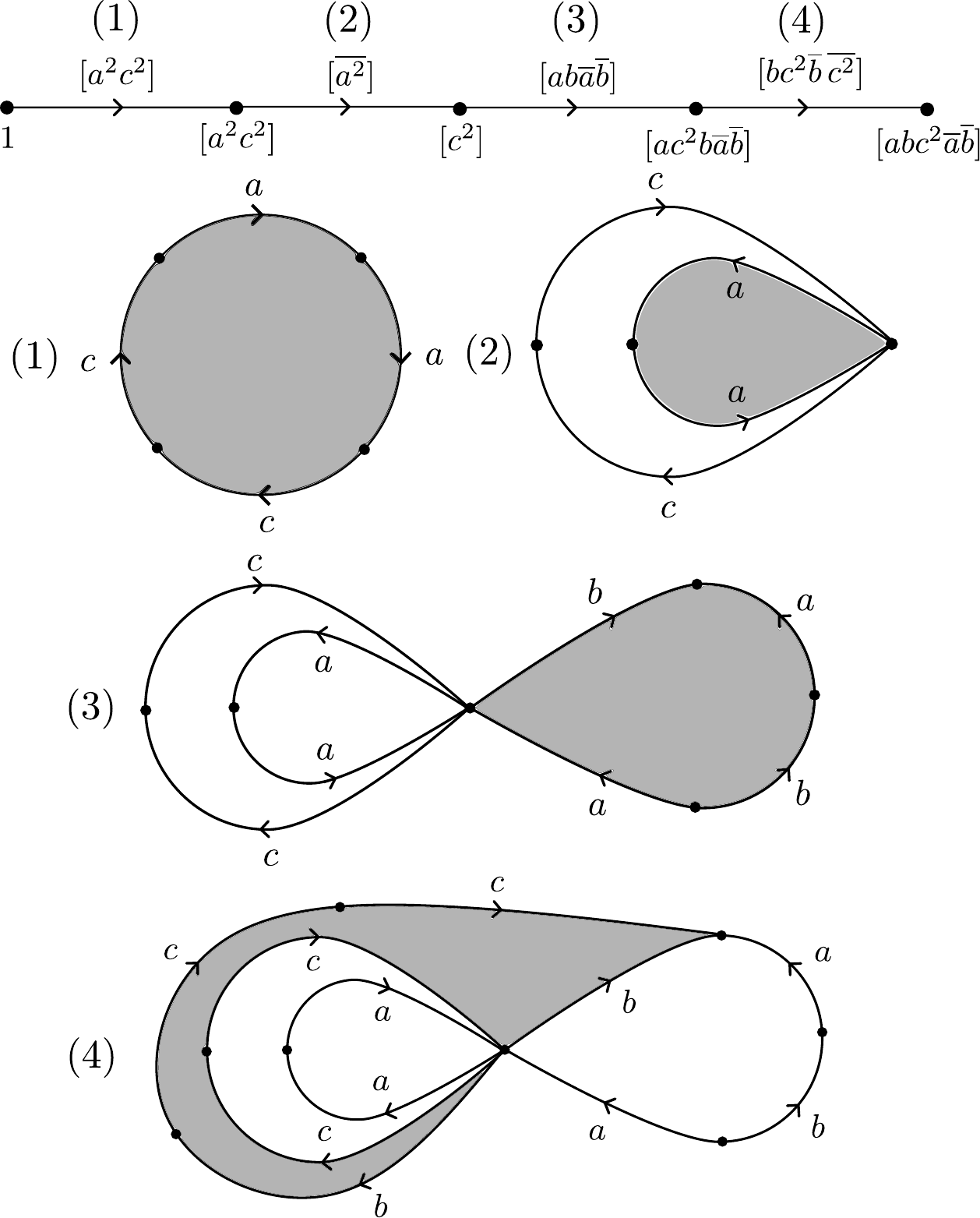}
    \caption{A path in $\Wh(\rose_3)$ from $1$ to $[abc^2\inv{(ba)}]$ and a corresponding sequence of Van Kampen diagrams.}
    \label{fig:paths-diagrams}
\end{figure}  

\equivalentcondition*

\begin{proof}
    It suffices to show that, given a finite regular cover $\Gamma$ of $\rose_n$, the complex $\Wh(\Gamma)$ is connected if and only if $\pi_1(\Gamma) = \pi_1(\Gamma\sep)$. Each path $p=p([1], [v])\in\pathwh(\Gamma)$ corresponds to an expression $\prod_{i=1}^l \alpha_i=\hat{v}\in[v]$, where $l$ is the length of $p$ and $\alpha_i\in\Csep\cap\pi_1(\Gamma)$ are representatives of the conjugacy classes labeling the edges of $p$.  If $\Wh(\Gamma)$ is connected, there is a path from $[1]$ to every vertex $[v]\in\Wh(\Gamma)$. Since $\Gamma$ is regular, the existence of an expression for $\hat{v}$ as a product of elements of $\Csep\cap\pi_1(\Gamma)$ implies that any $v'\in[v]$ is expressible as a product of elements of $\Csep\cap\pi_1(\Gamma)$. Hence $\pi_1(\Gamma)=\pi_1(\Gamma\sep)$. Similarly, supposing $\pi_1(\Gamma)=\pi_1(\Gamma\sep),$ there are expressions for all $g\in\pi_1(\Gamma)$ as  products of elements of $\Csep\cap\pi_1(\Gamma)$.  These expressions correspond to paths from $[1]$ to $[g]$ in $\Wh(\Gamma)$, and the union of these paths connects $\Wh(\Gamma)$.
\end{proof}

Consider $\Wh(\rose_n)$.  This space has a vertex for every conjugacy class of elements of $\free_n,$ and its edges are labeled by conjugacy classes of nontrivial elements of $\Csep$.  Since all primitive elements of $\free_n$ are in $\Csep$, the space $\Wh(\rose_n)$ is connected.  

\begin{prop}\label{theorem:isometric-action}
    $\Wh(\rose_n)$ admits an $\Out(\free_n)$-action by isometries. 
\end{prop}
\begin{proof}
    First we show $\Wh(\rose_n)$ admits an $\Aut(\free_n)$-action.  Let $\phi\in\Aut(\free_n)$.  The automorphism $\phi$ sends $[v]$ to $[\phi(v)]$. Suppose there is an edge $[\alpha]\in\Wh(\rose_n)$ from $[v]$ to $[w]$. Then there are elements $c, d\in\free_n$ satisfying $vc\alpha\inv{c}=dw\inv{d}$. If we apply $\phi$ to this equation, we get $\phi(v)\phi(c)\phi(\alpha)\inv{\phi(c)} = \phi(d)w\inv{\phi(d)}.$  So in $\Wh(\rose_n)$, the image of the edge $[\alpha]$ with endpoints $[v], [w]$ is an edge $[\phi(\alpha)]$ with endpoints $[\phi(v)], [\phi(w)]$.  Since $\phi\in\Aut(\free_n)$ is an automorphism, it permutes conjugacy classes of separable words, hence $\phi(\alpha)$ is separable.  Note that if $\phi$ is an inner automorphism, $\phi$ fixes $\Wh(\rose_n)$ pointwise, so we can upgrade the $\Aut(\free_n)$-action on $\Wh(\rose_n)$ to an $\Out(\free_n)$-action.  

    The action of $\Out(\free_n)$ preserves the metric on $\Wh(\rose_n)$: let $\phi\in\Out(\free_n)$ and \newline ${[v], [w]\in\Wh(\rose_n)}$. Let $p([v], [w])$ be any path between $[v]$ and $[w]$.  Since automorphisms of the free group preserve separability, $\seplength{v} = \seplength{\phi(v)}$, and $\mathrm{length}(p)=\mathrm{length}(\phi(p))$. The action of $\Out(\free_n)$ preserves $1$, so the action of $\Out(\free_n)$ on the separability complex of the rose is an action on a rooted graph.
\end{proof}
\begin{remark}
    Note that the only vertex of $\Wh(\rose_n)$ fixed by $\Out(\free_n)$ is $[1]$, so by deleting $[1]$ from $\Wh(\rose_n)$, we get a space with an isometric action of $\Out(\free_n)$ with no fixed points.  It is convenient for us to include $[1]$, but the proofs of all statements about $\Wh(\rose_n)$ could be altered to apply to $\Wh(\rose_n)\setminus \{[1]\}.$
\end{remark}

We get a similar result to \Cref{theorem:isometric-action} for all $\Wh(\Gamma)$ where $\Gamma$ represents a characteristic subgroup of $\free_n$. We do not get an $\Out(\free_n)$-action on the separability complexes of graphs representing subgroups which are not characteristic: by definition, subgroups that are not characteristic are not preserved under the action of $\Aut(\free_n)$ on $\free_n$. Since $\Gamma$ represents a normal subgroup of $\free_n$, $\pi_1(\Gamma)$ is fixed by $\Inn(\free_n)$, so there is an automorphism $\phi\in\Out(\free_n)$ and $g\in\pi_1(\Gamma)$ such that $\phi(g)\not\in\pi_1(\Gamma)$. 

There is a connected component of $\Wh(\Gamma)$ containing all nontrivial separable conjugacy classes of $\pi_1(\Gamma)$. If $\Gamma$ represents a characteristic subgroup, this ``separable component" is preserved under the action of $\Out(\free_n)$.

\begin{prop} \label{theorem:outFn-action-char}
Let $\Gamma$ represent a characteristic subgroup of $\free_n$. Then $\Wh(\Gamma)$ admits an isometric action of $\Out(\free_n)$. This action fixes the component of $\Wh(\Gamma)$ containing all elements of $\pi_1(\Gamma\sep)$.
\end{prop}

\begin{proof}
 For the first statement, note that since $\Gamma$ is characteristic, $\pi_1(\Gamma)$ is preserved under the action of $\Out(\free_n)$ on $\free_n$. The rest of the proof is the same as the proof of \Cref{theorem:isometric-action}.

 For the second statement, let $C$ be the component of $\Wh(\rose_n)$ containing the trivial vertex. Then $C$ contains vertices for all $[g]\subset\pi_1(\Gamma\sep)$ as there are edges from $[1]$ to the conjugacy classes of every separable element of $\pi_1(\Gamma)$. Let $[y]\in C$, so there is a path ${p=[\alpha_1], [\alpha_2], \ldots, [\alpha_m]}$ from the trivial vertex to $[y]$ where $m\in\N$ and $\alpha_i\in\Csep\cap\pi_1(\Gamma)$. For ${\phi\in\Out(\free_n)},$ $\phi(p)$ is a path with edges $[\phi(\alpha_1)], [\phi(\alpha_2)], \ldots, [\phi(\alpha_m)]$. Since $\phi$ is an automorphism and $\Gamma$ is characteristic, $[\phi(\alpha_i)]\in\Csep\cap\pi_1(\Gamma)$ for all $i,$ so $\phi(p)$ is a path from the trivial vertex to $[\phi(y)]$, implying ${[\phi(y)]\in C}$.  \qedhere
\end{proof}

Finally, regarding the topology of $\Wh(\rose_n)$, we have the following proposition.

\begin{prop}
   The flag complex of $\Wh(\rose_n)$ is infinite-dimensional for all $n\geq 2$.
\end{prop}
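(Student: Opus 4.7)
The plan is to exhibit an infinite clique in the 1-skeleton of $\Wh(\rose_n)$; since the flag complex contains a $k$-simplex for every $(k+1)$-clique, an infinite clique immediately yields simplices of arbitrarily large dimension and hence the infinite-dimensionality of the flag complex. Equivalently, it suffices to construct, for each $N$, a set of $N+1$ pairwise-adjacent vertices.

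My candidate is the family $\mathcal{S}=\{[x_1^k]:k\in\Z_{\geq 1}\}$ of conjugacy classes of positive powers of the first generator. First I would verify that these are pairwise distinct vertices of $\Wh(\rose_n)$: each $x_1^k$ is cyclically reduced, so two such words represent the same $\free_n$-conjugacy class only if they are cyclic permutations of one another, which forces equality of exponents. Hence $\mathcal{S}$ is indeed an infinite set of vertices.

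To check pairwise adjacency, fix $j<k$ and observe the factorization $x_1^k=x_1^j\cdot x_1^{k-j}$ in $\free_n$. Because $n\geq 2$, the subgroup $\langle x_1\rangle$ is a \emph{proper} free factor of $\free_n$, so $x_1^{k-j}$ is separable and its conjugacy class is a legitimate edge label of $\Wh(\rose_n)$. By the definition of $\Wh(\rose_n)$, the factorization above exhibits an edge from $[x_1^j]$ to $[x_1^k]$ labeled $[x_1^{k-j}]$. Thus every pair of vertices of $\mathcal{S}$ is joined by an edge, $\mathcal{S}$ is an infinite clique, and the flag complex of $\Wh(\rose_n)$ contains simplices of every finite dimension.

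I do not anticipate any substantial obstacle. The essential point is simply that all positive powers of a single basis element lie in a common proper free factor of $\free_n$ whenever $n\geq 2$; once this is noted, the pairwise adjacency and distinctness of the vertices in $\mathcal{S}$ are immediate.
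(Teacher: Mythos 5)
Your proof is correct and follows essentially the same strategy as the paper: exhibit an infinite clique whose pairwise ``differences'' are powers of a single basis element, hence separable. The paper uses the family $\{[1],[x_1x_2],[x_1x_2^2],\ldots\}$ with edge labels $[x_2^{l-k}]$, while you use $\{[x_1^k]\}_{k\geq 1}$ with edge labels $[x_1^{k-j}]$; the two choices are interchangeable.
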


\begin{proof}
    Recall that we let $\free_n$ be generated by the set $X_n=\{x_1, x_2, \ldots, x_n\}$. Let \newline ${Y=\{x_1x_2^m\}_{m=1}^{\infty}.}$ Every element of $Y$ is primitive in $\free_n$ for all $n$, so every element of $Y$ is separable. Without loss of generality, let $k$ and $l$ be positive integers such that $k<l$. Observe that $\dsep{[ab^k]}{[ab^l]}=1$ for all $k, l$ as $ab^k\cdot b^{l-k}=ab^l.$ Thus, for all $d\geq 1$, $\Wh(\rose_n)$ contains the one-skeleton of a simplex of dimension $d$ on the vertices $\{[1], [ab], [ab^2], \ldots, [ab^d]\}.$
\end{proof}

\subsection{Diameter of the separability complex}\label{sec: inf-diameter}
In this section, we prove that the diameter of every component of $\Wh(\Gamma)$ is infinite. As a corollary, we show that the diameters of $\Cay(\free_n; \Csep)$ and $\Cay(\free_n, \curves\prim)$ are infinite. The key idea of the proof is that paths in the separability complex correspond to diagrams for conjugacy classes of $\free_n$. If $\Wh(\rose_n)$ had finite diameter, there would be a universal bound on the areas of minimal Van Kampen diagrams for every element of $\free_n$ over $\langle X_n\mid\Csep\rangle$. In the proof, we utilize \emph{positive words}.

\begin{definition}
    A word $w\in\words_n$ is \emph{positive} if all of the letters in $w$ are positive.  
\end{definition}

We begin by proving two lemmas.  First, we show that while a separable word can have inseparable subwords, these inseparable subwords are ``barely" inseparable: in particular, inseparable subwords $v$ of separable words have the property that the graph resulting from removing a certain edge of $\Omega(v)$ has a cut vertex (see \Cref{lemma: subwords-separable-words}).  In addition, we show that if $D$ is a Van Kampen diagram for a positive word, $D$ has no cut edges (see \Cref{lemma: positive-words-wedge-disks}). 

\begin{lemma}\label{lemma: subwords-separable-words}
    Let $y_i\in X_n$ and suppose $u=y_1y_2\ldots y_l$ is a positive word satisfying the property that removing the edge corresponding to $y_ly_1$ from the Whitehead graph of $u$ results in a graph with no cut vertices.  Let $t$ be any positive word and $w=ut$.  Then $w$ is inseparable.
\end{lemma}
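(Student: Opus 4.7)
My plan is to prove $w = ut$ is inseparable by appealing directly to Whitehead's algorithm (\Cref{theorem: Whitehead algorithm}): I will show that $\Omega(w)$ is connected and has no cut vertex.

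First I decompose the cyclic length-two subwords of $w$ according to their position relative to the break between $u$ and $t$. Since $u$ and $t$ are each positive and cyclically reduced, $w$ is cyclically reduced and its cyclic length-two subwords fall into four families: (i) the internal subwords $y_iy_{i+1}$ of $u$, which contribute precisely the edges of $\Omega(u)\setminus\{y_l\overline{y_1}\text{ edge}\}$; (ii) the junction subword $y_lt_1$, contributing the edge $y_l\overline{t_1}$; (iii) the internal subwords $t_it_{i+1}$ of $t$; and (iv) the wrap subword $t_my_1$, contributing the edge $t_m\overline{y_1}$. Consequently $\Omega(w)$ contains the ``rigid'' subgraph $\Omega(u)\setminus\{y_l\overline{y_1}\text{ edge}\}$ (no cut vertex by hypothesis) together with extra structure attached to this rigid piece at two \emph{distinct} vertices, namely $y_l$ (via the junction) and $\overline{y_1}$ (via the wrap).

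The key observation driving the argument is that the two attachment points $y_l$ and $\overline{y_1}$ are distinct elements of $X_n^*$ (one positive, one negative), so removing any single vertex from $\Omega(w)$ can destroy at most one of these attachments; the other remains to glue the $t$-part back to the rigid $u$-part. In Step 1 (connectivity), I start from the hypothesized connected rigid subgraph, use the junction edge to absorb $\overline{t_1}$, the wrap edge to attach $t_m$ to $\overline{y_1}$, and then traverse the chain of $t$-internal edges $t_i\overline{t_{i+1}}$ to pull in every remaining vertex of $\Omega(w)$. In Step 2 (no cut vertex), I take any putative cut vertex $v$ and split into cases: if $v$ lies in the rigid $u$-subgraph, then $\Omega(u)\setminus\{y_l\overline{y_1}\text{ edge}\}\setminus v$ is still connected by the no-cut-vertex hypothesis, and at least one of $y_l, \overline{y_1}$ survives to transport the $t$-structure back to the $u$-core; if $v$ is a pure $t$-vertex, the entire rigid $u$-subgraph is untouched and both attachments survive, so it suffices to check that the surviving $t$-internal edges still connect the remaining $t$-vertices into the core.

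The main obstacle will be the bookkeeping in Step 2 for pure $t$-vertex removals: one must check that removing $v \in \{t_i, \overline{t_i}\}$ does not strand any other $t$-vertex, using that the $t$-internal edges together with the redundant junction/wrap attachments cover all surviving $t$-vertices. The hypothesis on $u$ is what ensures the $u$-core resists every single-vertex removal, while the two-point attachment structure supplied by the junction and wrap provides the redundancy needed for the $t$-part. Having established both connectivity and the absence of a cut vertex, Whitehead's algorithm terminates at Step 3 with the verdict that $w$ is inseparable.
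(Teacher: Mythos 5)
Your identification of the edge set of $\Omega(w)$ --- the edges of $\Omega(u)$ minus the one for $y_l\overline{y_1}$, plus the junction edge $y_l\,\overline{t_1}$, the internal edges $t_i\,\overline{t_{i+1}}$, and the wrap edge $t_m\,\overline{y_1}$ --- agrees with the paper. But the mechanism you build on top of it does not work. The edges contributed by $t$ do not form a chain: the subword $t_it_{i+1}$ produces an edge joining the vertex $t_i$ to the vertex $\overline{t_{i+1}}$, and no $t$-subword produces an edge joining $\overline{t_{i+1}}$ to $t_{i+1}$, so consecutive ``links'' of your chain share no endpoint. Starting from $y_l$ you reach only $\overline{t_1}$; starting from $\overline{y_1}$ you reach only $t_m$; the pairs $\{t_i,\overline{t_{i+1}}\}$ in between are not reached by ``traversing'' anything. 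If the letters of $t$ genuinely contributed vertices lying outside the rigid $u$-core, then $\Omega(w)$ could in fact be disconnected (take $t$ a product of distinct letters each occurring exactly once in $w$: then $\{t_1,\overline{t_2}\}$ spans an isolated edge), and both your Step 1 and the ``pure $t$-vertex'' and ``one attachment survives'' cases of your Step 2 would fail to detect this.

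What saves the lemma --- and what the paper's proof uses --- is that there is nothing to pull in at all. The vertex set of a Whitehead graph is always all of $X_n^*$, and since a disconnected graph on at least three vertices always has a cut vertex under the paper's definition, the hypothesis forces $\Omega(u)$ minus the $y_l\overline{y_1}$ edge to be a \emph{connected spanning} subgraph of $\Omega(w)$ with no cut vertices. Hence $\Omega(w)$ is obtained from such a subgraph by adding edges only, and adding edges preserves both connectivity and the absence of cut vertices; step (3) of \Cref{theorem: Whitehead algorithm} then gives inseparability. This is the paper's entire proof, essentially one sentence long. Your two-attachment-point redundancy analysis is machinery for a configuration (a genuine $t$-appendage hanging off the core) that the hypothesis rules out, and in the configuration where that machinery would actually be needed, it is incorrect.
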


\begin{proof}
    Let $t = z_{l+1}\ldots z_m$, where each $z_i$ is in $X_n$. The Whitehead graph of $w$ has all the edges in the Whitehead graph of $u$ except the edge corresponding to $y_ly_1$, plus the edges corresponding to $\{y_lz_{l+1}, z_jz_{j+1}, z_my_1 \mid l+1\leq j\leq m-1\}$.  Adding edges to a graph with no cut vertices results in a graph with no cut vertices, so $w$ is inseparable.
\end{proof}

\begin{definition}
    A $\emph{cut edge}$ of a cell complex $X$ is an edge $e\in X$ which satisfies the property that removing a single point on the interior of $e$ disconnects $X$.
\end{definition}

\begin{lemma}\label{lemma: positive-words-wedge-disks}
    Let $w$ be a positive word.  Then if $D$ is a Van Kampen diagram for $w$, $D$ has no cut edges.  
\end{lemma}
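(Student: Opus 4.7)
The plan is to argue by contradiction: assume $D$ has a cut edge and derive that the boundary word of $D$ must contain both a letter $x_i \in X_n$ and its inverse $\overline{x_i}$, contradicting the positivity of $w$. The whole argument is planar-topological and does not need anything about the relators beyond the fact that their 1-cells are labeled by $X_n$ with orientations.

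First, suppose for contradiction that $D$ has a cut edge $e$, labeled by some $x_i \in X_n$ and oriented accordingly. Because $D$ is a finite, simply connected, planar 2-complex, removing the interior of $e$ disconnects $D$ into two nonempty pieces $D_1$ and $D_2$, meeting only in the two endpoints of $e$. The relevant structural observation is that in a planar Van Kampen diagram, an edge that is shared by the boundaries of two distinct 2-cells cannot be a cut edge (those 2-cells themselves provide a path between any two points near $e$ avoiding the interior of $e$), so $e$ either lies on a dead-end ``tree'' portion of $D$ or bounds exactly one 2-cell of $D$ with its other side lying in the unbounded face.

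Next, I would use the standard fact that the boundary cycle of a finite, simply connected, planar 2-complex traverses every edge not shared by two 2-cells exactly twice, once in each direction. Applied to $e$, this means that as one reads off the boundary label of $D$ starting at the basepoint, the oriented edge $e$ is crossed once with its orientation and once against it. Hence the boundary word contains at least one occurrence of $x_i$ and at least one occurrence of $\overline{x_i}$.

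Finally, since the boundary label of $D$ is $w$ (up to a choice of basepoint and cyclic permutation), this gives an occurrence of $\overline{x_i}$ in $w$, contradicting the hypothesis that $w$ is positive. The main (and essentially only) subtlety is justifying the ``boundary traverses every non-shared edge twice'' fact cleanly; I would handle it by decomposing $D$ along its cut edges into maximal 2-connected pieces plus trees, and observing the claim on each piece separately. Once that is done, the contradiction is immediate and the lemma follows.
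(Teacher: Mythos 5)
Your argument is correct and is essentially the paper's own (one-sentence) proof: a cut edge $e$ forces the boundary word to contain both $\lab(e)$ and $\lab(e)^{-1}$, contradicting positivity. Your elaboration via the boundary cycle is fine; just note that a cut edge cannot lie on the boundary of \emph{any} 2-cell (not merely ``not shared by two''), since the rest of that cell's boundary would reconnect its endpoints, which is exactly why the ``traversed twice, once in each direction'' fact applies to it.
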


\begin{proof}
    If $D$ is a Van Kampen diagram for $w$ which contains a cut edge $e,$ then $w$ contains both $\lab(e)$ and $\lab(e)^{-1}$ as subwords, so $w$ is not positive.
\end{proof}

Let $\widehat{\words_n^{\infty}}$ be the set of freely reduced one-sided infinite words, i.e. the subset of $\prod_{i = 1}^{\infty} X_n^*$ containing exactly the elements with no subwords $x_i\inv{x_i}$ for any $i$. Our next lemma shows that there are positive infinite words $W\in\widehat{\words_n^{\infty}}$ with the property that all subwords of $W$ of sufficient length are inseparable and not a subword of any separable word.  Later, we will show that paths in $\Wh(\rose_n)$ corresponding to these infinite words have infinite diameter. 

\begin{lemma}\label{lemma: infinite-positive-filling-words}
    Let $k\in\N$ and $n\geq 2$.  If $k > 4n^2$, there are infinite words $W\in\widehat{\words_n^{\infty}}$ with the property that if $w_i$ is an initial subword of $W,$ and $u\subset w_i$ is a cyclic subword of $w_i$ of length at least $k$, then $u$ not a subword of any separable word.
\end{lemma}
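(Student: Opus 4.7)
The plan is to exhibit $W$ explicitly as the periodic word $V^{\omega}$ for a short positive template $V$, and then to show by a direct counting argument that every sufficiently long cyclic subword of an initial segment of $W$ has a Whitehead graph containing a copy of $K_{n,n}$, which is connected with no cut vertex.

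First, I enumerate the $n^{2}$ ordered pairs $(i,j)\in\{1,\ldots,n\}^{2}$ and let $V$ be the concatenation of the two-letter blocks $x_{i}x_{j}$ taken in this enumeration, so that $V$ is a positive word of length $2n^{2}$ in which every $x_{i}x_{j}$ occurs as a length-two (cyclic) subword. Set $W := VVV\cdots \in \widehat{\words_n^{\infty}}$, which lies in $\widehat{\words_n^{\infty}}$ because $W$ is positive and hence freely reduced. By $|V|$-periodicity of $W$, every linear substring of $W=V^{\omega}$ of length $2n^{2}+1$ sees every cyclic position of $V$, so it realizes all $2n^{2}$ length-two cyclic subwords of $V$, and in particular contains every $x_{i}x_{j}$ as a length-two subword.

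Next, fix $k > 4n^{2}$ and let $u$ be a cyclic subword of some initial subword $w_{i}$ of $W$ with $|u|\geq k$. If $u$ is an ordinary linear subword of $w_{i}$, then $|u|\geq k>4n^{2}\geq 2n^{2}+1$ and the previous step applies directly. Otherwise $u=s\cdot p$ wraps around $w_{i}$, with $s$ a suffix and $p$ a prefix of $w_{i}$; then $|s|+|p|=|u|>4n^{2}$, so $\max(|s|,|p|)\geq 2n^{2}+1$, and the longer of $s$ and $p$ is itself a linear substring of $W$ of length at least $2n^{2}+1$, so it (and therefore $u$) contains every $x_{i}x_{j}$ as a length-two subword. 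In either case $u$ contains every positive length-two word as a length-two subword.

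Finally, since $u$ is positive and cyclically reduced, each length-two cyclic subword $x_{i}x_{j}$ of $u$ contributes an edge in $\Omega(u)$ from $x_{i}$ to $\overline{x_{j}}$, so the underlying simple graph of $\Omega(u)$ is bipartite between $X_{n}$ and $\overline{X_{n}}$ and contains every edge of $K_{n,n}$. For $n\geq 2$, $K_{n,n}$ is connected with no cut vertex (removing any vertex leaves a connected $K_{n-1,n}$ or $K_{n,n-1}$), and adding further edges cannot introduce a cut vertex. Hence $\Omega(u)$ is connected with no cut vertex, and Whitehead's algorithm (\Cref{theorem: Whitehead algorithm}(3)) gives that $u$ is inseparable, i.e., not contained in any proper free factor of $\free_{n}$. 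The main obstacle is the wrap-around case for cyclic subwords, and this is precisely what forces the factor of two between $|V|=2n^{2}$ and the bound $k>4n^{2}$; apart from this, the argument reduces to the elementary observation that $K_{n,n}$ is $2$-connected for $n\geq 2$.
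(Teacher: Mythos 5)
Your construction and the core mechanism --- force every positive digram $x_ix_j$ to appear in every sufficiently long window, so that the Whitehead graph of any long cyclic subword contains the complete bipartite graph $K_{n,n}$ on $X_n\sqcup\overline{X_n}$, which is connected with no cut vertex --- are essentially the same as the paper's (the paper calls this graph $B_n$ and leaves the choice of $W$ to examples rather than fixing a canonical period $V$). Your wrap-around analysis and the check that $K_{n,n}$ is $2$-connected for $n\geq 2$ are fine, and what you prove is exactly the literal statement of the lemma.

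However, there is a gap between what you prove and what the lemma is for, and the paper's proof is calibrated to close it. In the proof of \Cref{thm:infinite-diameter}, the length-$k$ cyclic subword $u$ arises as the label of a boundary arc of a Van Kampen diagram, i.e.\ as a \emph{proper subword} of the separable boundary word of some region. The contradiction required is therefore not that $u$ itself is inseparable, but that $u$ cannot occur as a subword of any separable word --- a strictly stronger property, since the Whitehead graph of an ambient word containing $u$ has all edges of $\Omega(u)$ \emph{except} the wrap-around edge joining the last letter of $u$ to its first. This is precisely the content of \Cref{lemma: subwords-separable-words}, which the paper invokes and you do not: one needs $\Omega(u)$ minus one edge to still have no cut vertex. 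For $n\geq 3$ your construction survives this upgrade, because $K_{n,n}$ minus any single edge is still connected with no cut vertex. For $n=2$ it does not come for free: $K_{2,2}$ minus an edge is a path, which has cut vertices. This is exactly why the paper treats $n=2$ separately and demands \emph{two} copies of every digram in each window (so that deleting any one edge of $\Omega(u)$ leaves a graph that still contains $K_{2,2}$). Your periodic word may or may not happen to satisfy this, depending on the enumeration order of the digrams in $V$, but your argument neither states nor uses it. So as a proof of the statement as written your proposal is correct; as a proof of the lemma the rest of the paper relies on, the appeal to \Cref{lemma: subwords-separable-words} and, with it, the special treatment of $n=2$ are missing.
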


\begin{proof}
    Recall that we take $X_n=\{x_i\mid 1\leq i\leq n\}$ to be the generating set for $\free_n$. The edges in the Whitehead graph of $w\in\free_n$ are in bijection with length-two cyclic subwords of $w$.  If $w$ is a positive word, then by construction of the Whitehead graph, the most complex Whitehead profile $w$ can have is the complete bipartite graph $B_n$, where the partition on the vertices of the profile is given by $\{x_i\in X_n\}\bigsqcup\{\inv{x_i}\mid x_i\in X_n\}$ (see \Cref{fig: Wh-profiles-positive}). Observe that for $n\geq 3$, $B_n$ has the property that removal of a single edge yields a graph with no cut vertices, so by \Cref{lemma: subwords-separable-words}, if $n\geq 3$, $B_n$ cannot be the Whitehead profile of any subword of a separable word.
    
    Let $n\geq 3$. In $\free_n$, there are $n^2$ positive words $x_ix_j$ of length two.  To construct $W$, let $k > 4n^2$ and ensure that each of the $x_ix_j$ are present in every subword of $W$ of length at least $\frac{k}{2}$ (see (1) of \Cref{ex: inf-pos-filling-word} for an example of such a word for $n=4$).  Let $w$ be an initial subword of $W$ with a cyclic subword $u$ of length at least $k$.  If $u$ is a subword of $w$, we are done as the Whitehead profile of $u$ is $B_n$.  If $u=ts$ where $t$ is a terminal subword of $w$ and $s$ is an initial subword of $w$, either $t$ or $s$ has length at least $\frac{k}{2}$, so we are done as the Whitehead profile of either $t$ or $s$ is $B_n$. 
      
    For $n=2$, removal of an edge from $B_2$ does yield a cut vertex, so we require that every length-$k$ subword of $W$ contains two copies of every positive length-two subword $x_ix_j$ (see (2) of \Cref{ex: inf-pos-filling-word} for an example). 
    This way, if $w\subset W$ is an initial subword, $u$ is a cyclic subword of $w$ of length at least $k$, and $e\in\Omega(u)$ is an edge, $\sigma(\Omega(u)\setminus\{e\})$ has no cut vertices. \end{proof}

\begin{figure}
        \centering
        \includegraphics[scale=.55]{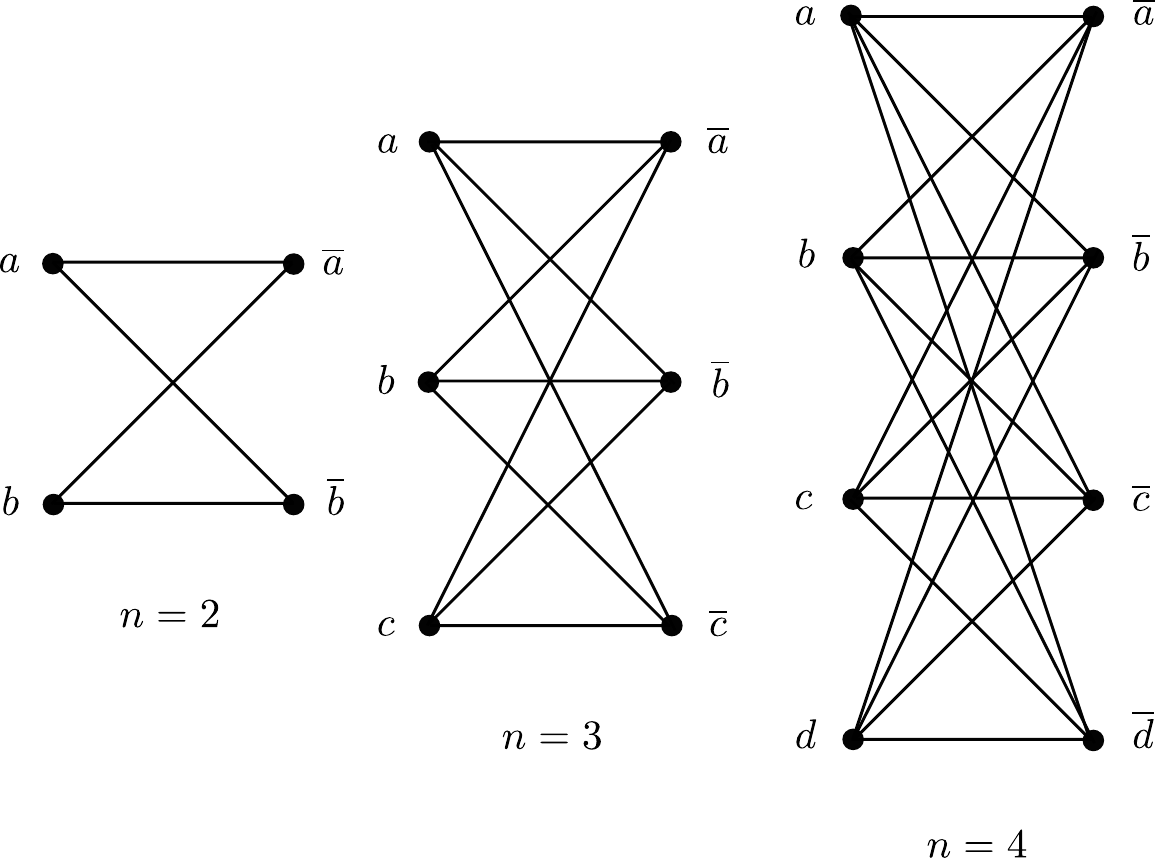}
        \caption{Maximal Whitehead profiles of positive words for small $n$}
        \label{fig: Wh-profiles-positive}
\end{figure}

\begin{example}\label{ex: inf-pos-filling-word}

    \begin{enumerate}
    \item Let $n=4$ and let $\{a, b, c, d\}$ be a basis for $\free_4$.  Consider the infinite word $W:=\prod_{i=1}^{\infty} w,$ where $w=a^2b^2c^2d^2acbdcadba$.  Then $W$ satisfies the conditions of \Cref{lemma: infinite-positive-filling-words}. 
    \item Let $n=2$.  Let $\{a, b\}$ be a basis for $\free_2$, $w=a^2b^2a$, and $W=\prod_{i=1}^\infty w$.  Then $W$ satisfies the conditions of \Cref{lemma: infinite-positive-filling-words}.
   \end{enumerate}
\end{example}

\begin{definition}
A \emph{boundary arc} of a diagram $D$ is a component of the intersection of the boundary of $D$ and the boundary of a single region $R\subset D$.  Boundary arcs are labeled by subwords $u\subset\lab(\partial D)$ such that  $u$ is also a subword of $\lab(\partial R)$.  See \Cref{fig:boundary arcs}. \begin{figure}
        \centering
        \includegraphics[scale=.5]{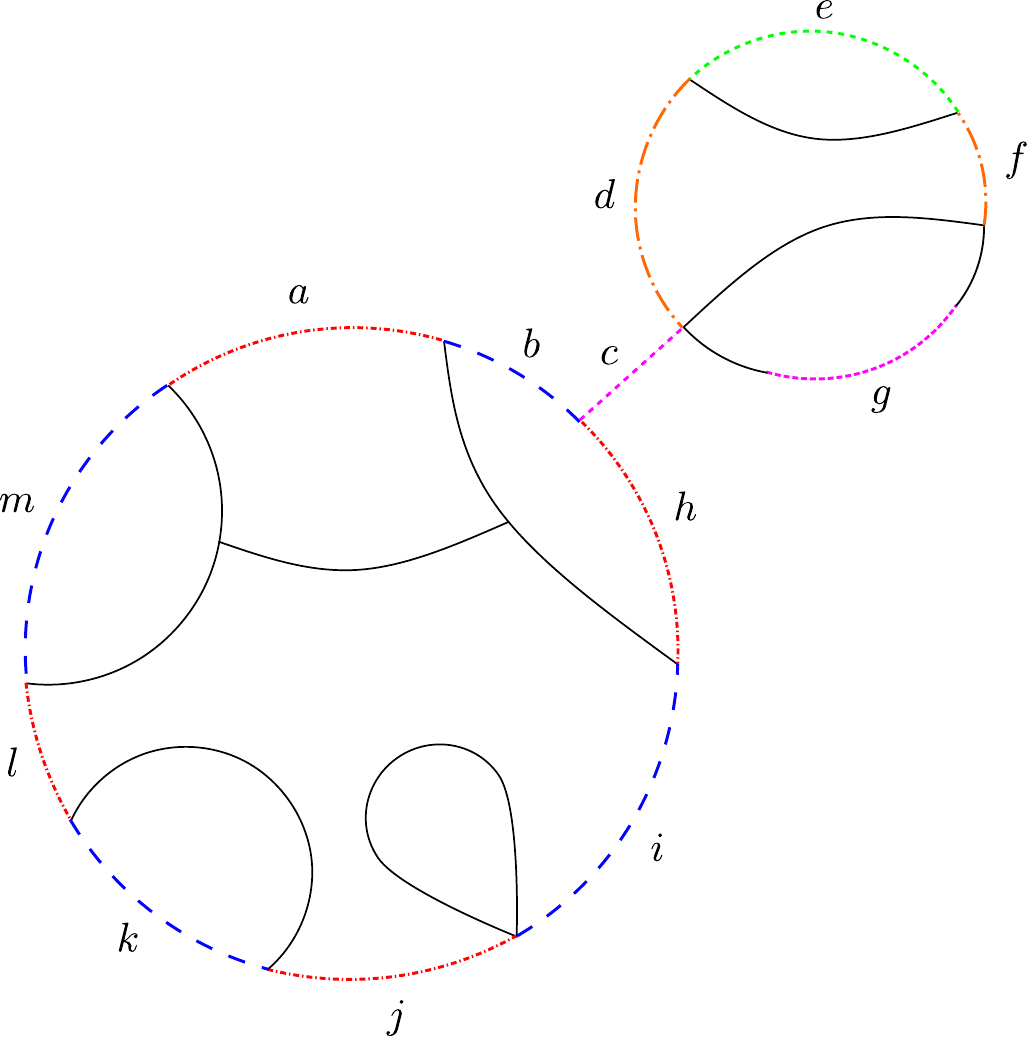}
        \caption{A diagram $D$.  Every labeled arc on the boundary of $D$ is a boundary arc except the pink arcs $c$ and $g$. The arc $c$ is not a boundary arc because it does not lie on the boundary of a region (see \Cref{remark: disk gluing}), and $g$ is not a boundary arc because it is not a connected component of the intersection of the boundary of a region with the boundary of $D$.}
        \label{fig:boundary arcs}
    \end{figure}
\end{definition}\noindent By \Cref{remark: disk gluing}, we glue regions along cycles labeled by cyclically reduced words, so cut edges of diagrams are \emph{not} boundary arcs. 

Let $\mathcal{P}=\langle X_n\mid R\rangle$ be a group presentation. Our next proposition gives an upper bound on the number of boundary arcs of a ``nice" Van Kampen diagram over $\mathcal{P}$ of area $a$ (see below for the definition of \emph{nice}). Let $(w_i)_i$ be an infinite sequence of positive words of increasing length, and suppose that there is an upper bound on the areas of minimal diagrams $D_i$ for $w_i$ over $\mathcal{P}$. Then the maximal length of a boundary arc of $D_i$ approaches infinity as $i\rightarrow\infty.$

In the rest of the paper, it will be helpful to consider diagrams with no valence-two vertices. Of course, Van Kampen diagrams normally have many valence-two vertices, but a Van Kampen diagram with valence-two vertices is topologically equivalent to a 2-complex without them. Suppose $v$ is a valence-two vertex in a Van Kampen diagram $D$ with incident edges $e_1$ and $e_2$ satisfying $\tau(e_1)=\iota(e_2)=v$.  We can eliminate $v$ by replacing the path of length two labeled by $e_1e_2$ with a single edge $e$ of length two labeled by $\lab(e_1)\lab(e_2).$

\begin{definition}\label{def:nice}
    A diagram is \emph{nice} if it has no valence-2 vertices. A \emph{nice diagram} is a  2-complex resulting from eliminating valence-two vertices from a disk diagram in the manner described above. 
\end{definition}

For disk diagrams $D$, it will be convenient to be able to refer to certain collections of regions of $D$. 

\begin{definition}  
Recall that a cut vertex or edge of a diagram $D$ is a vertex or edge of $D$ whose removal disconnects $D$. A \emph{continent} of a diagram $D$ is a connected component of \newline $D\setminus\{\text{cut vertices and edges of } $D$\}$. 
\end{definition}

\begin{prop}\label{prop:arcs of disk diagrams}
    Let $D$ be a disk diagram of area $a\geq 2$ with no vertex of degree one. Then the number of boundary arcs of $D$ is at most $2(a-1)$.
\end{prop}

\begin{proof}
     All diagrams in this proof are nice (see \Cref{def:nice}). If a disk diagram $D$ has separating edges, deleting these edges and identifying their endpoints results in a disk diagram with the same boundary arcs as $D$, so we may assume that $D$ has no separating edges. The proof is by induction on $a$.  For $a=2,$ $D$ is homotopy equivalent to one of the diagrams in \Cref{fig:diagrams-area-two}, all of which have at most two boundary arcs. Now let $a>2$ and suppose that every disk diagram with no degree-one vertex of area $b\geq2$ where $b<a$ has at most $2(b-1)$ boundary arcs. Let $D$ be a disk diagram of area $a$ without a degree-one vertex. The proof now splits into three cases: in the first case, $D$ has an interior edge $e$ on the boundary of distinct regions such that the closure of $e$ does not intersect $\partial D$; in the second case, $D$ has an interior edge $e$ on the boundary of distinct regions such that the closure of $e$ intersects $\partial D$; in the third case, $D$ is a wedge of disks.  We note that there are no other cases: an edge is either part of the boundary of zero, one, or two distinct regions.  An edge of a disk diagram not lying on the boundary of any region is a separating edge, which we assumed did not exist. The case where $D$ has an interior edge lying on the boundary of one region is impossible as $D$ has no vertices of degree one. Thus all edges of the nice diagram $D$ are external and lie on the boundary of one region, so $D$ is a wedge of disks.

     \begin{figure}
        \centering
        \includegraphics[scale=.7]{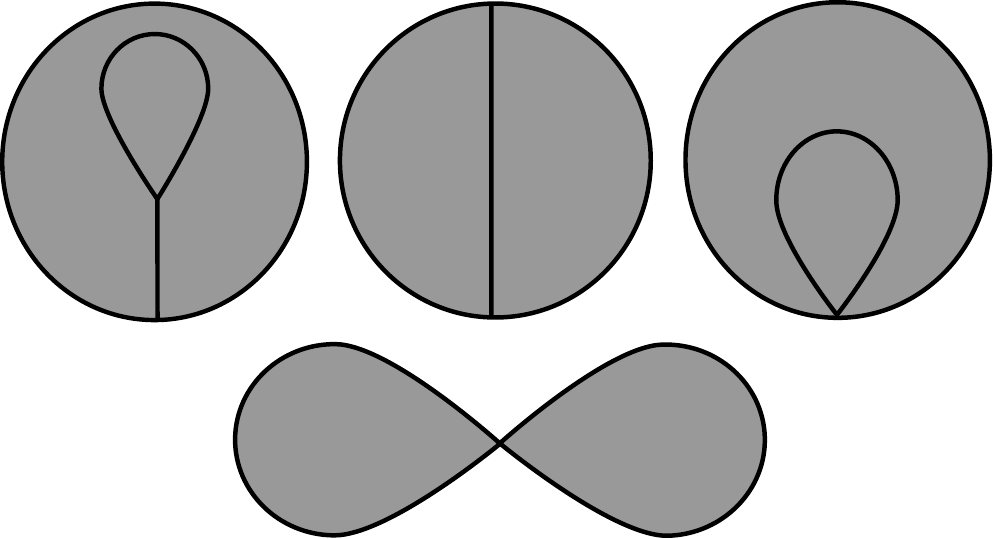}
        \caption{Disk diagrams of area two with no cut edges.}
        \label{fig:diagrams-area-two}
    \end{figure} 
  
    \noindent \textbf{Case 1: $D$ has an edge $e$ on the boundary of two distinct regions $R_1$ and $R_2$, and the closure of $e$ lies on the interior of $D$.} Removing $e$ from $D$ results in a 2-complex of area $a-1$ with a region $R=R_1\cup R_2$. If removal of $e$ results in a complex with no degree-one vertices and connected one-skeleton, the 2-complex is a disk diagram that by induction has at most $2(a-2)$ boundary arcs by induction, and therefore so is $D$. We claim the following: 
    \begin{enumerate}
        \item Removal of $e$ does not disconnect the 1-skeleton of $D;$
        \item If removal of $e$ results in a 2-complex with a degree-one vertex, the 2-complex has only one such vertex. 
    \end{enumerate}  
    
    For 1), suppose toward a contradiction that removal of $e$ did disconnect $D^{(1)}$, and let the components of $D^{(1)}\setminus\{e\}$ be $C_1$ and $C_2$.  One component, without loss of generality $C_1$, contains the boundary of the continent containing $e$, and $C_2$ lies inside this boundary as the closure of $e$ is in the interior of $D$.  Since $C_1$ and $C_2$ are disconnected after removing $e$ and $C_2$ lies inside $C_1$, there is a single region inside the continent containing $C_2$ which surrounds $C_2$ and intersects no edges of $D\setminus{e}$.  The edge $e$ is therefore on the boundary of this and no other region of $D$, which is a contradiction.

    For claim 2), recall that we assume all diagrams in this proof are nice. Therefore, if the removal of $e$ creates a degree-one vertex $v$ in the resulting 2-complex, the endpoints of $e$ are the same point, and this point is $v$.  Retract the edge $f$ incident to $v$ to its opposite endpoint $w$ to obtain a disk diagram $D'$ of area strictly less than $a$. Supposing $w$ is not on the boundary of $D$, removal of $e$ and the retraction of $f$ to $w$ does not change the number of boundary arcs, so $D$ has at most $2(a-2)$ boundary arcs by induction.  If $w$ is on the boundary of $D$, then this process removes at most one boundary arc.  By induction, $D'$ has at most $2(a-2)$ boundary arcs, so $D$ has at most $2(a-2)+1$ boundary arcs.\newline
    \noindent\textbf{Case 2: $D$ has an interior edge $e$ on the boundary of two distinct regions, and the closure of $e$ intersects the boundary of $D$}. Remove $e$ from $D$ to obtain a $2$-complex $D'$ of area $a-1$ and make $D'$ nice by eliminating valence-$2$ vertices.  By the argument above, removal of $e$ does not disconnect $D^{(1)}$ as $e$ is on the boundary of distinct regions. Additionally, since $D$ is nice, removal of $e$ does not create valence-$1$ vertices in $D'.$ Therefore $D'$ is a disk diagram with no valence-$1$ vertex. Consider how many boundary arcs are added to $D'$ when we replace $e$ to create $D$.  If $e$ is attached at vertices of the nice diagram $D'$, the number of boundary arcs of $D'$ does not change when we replace $e$, so by induction, $D$ has at most $2(a-2)$ boundary arcs.  Each endpoint of $e$ attached at the interior of an edge of $D'$ increases the number of boundary arcs by one.  By induction, $D'$ has at most $2(a-2)$ boundary arcs, so $D$ has at most $2(a-2)+2=2(a-1)$ boundary arcs.
\newline
    \noindent\textbf{Case 3: $D$ is a wedge of disks.} Construct the spine $S$ of $D$ as follows: for each region $R$ of $D$, there is one vertex of $S$ on the interior of $R$. $S$ also has a vertex at each wedge point of $D$.  The edges of $S$ are drawn between the vertices on the interior of regions $R$ of $D$ and the vertices at wedge points on the boundary of $R$. The disk diagram $D$ deformation retracts onto $S$, so $S$ is a tree. The tree $S$ is finite, so it contains degree-one vertices. Each degree-one vertex $v$ of $S$ corresponds to the interior of a region $R_v$ of $D$. Removing a $R_v$ and its boundary results in a disk diagram $D'$ of area $a-1$, and replacing $R_v$ and its boundary to reconstruct $D$ adds one or two boundary arcs, depending on whether $\partial R_v$ intersects the nice diagram $D'$ at a vertex or on the interior of an edge.  By induction, the proof is finished.
\end{proof}

By \Cref{lemma:paths=diagrams}, paths from $1$ to $[w]$ in $\Wh(\Gamma)$ are represented by Van Kampen diagrams over $\langle X_n\mid \Csep\cap\pi_1(\Gamma)\rangle$.  Paths between vertices $[v]$ and $[w]$ in $\Wh(\Gamma)$ where neither $[v]$ nor $[w]$ is the trivial vertex can also be visualized as planar $2$-complexes, as the following lemma demonstrates. Recall that our definitions of Van Kampen diagram (\Cref{def:Van_Kampen_diagram}) and annular diagram over a presentation (\Cref{def:annular-diagram}) require regions with immersed boundaries, and therefore the boundaries of regions of diagrams are assumed throughout the paper to be labeled by cyclically reduced words.

\begin{lemma}\label{lemma:existence-annular}
Let $[v], [w]\in \Wh(\Gamma)$ be vertices in the same component of $\Wh(\Gamma)$ such that $[v]\neq 1$ and $[w]\neq 1$. Let $p([v], [w])\in\pathwh(\Gamma)$ be a path between $[v]$ and $[w]$ with edges labeled $[\alpha_1]$, $[\alpha_2]$, \ldots, $[\alpha_l]$ representing the equation $\hat{v}\hat{\alpha_1}\hat{\alpha_2}\ldots\hat{\alpha_l}=\hat{w}$, where $\hat{v}\in[v]$, $\hat{\alpha_i}\in[\alpha_i]$, and $\hat{w}\in[w]$. Then at least one of the following is true.
\begin{enumerate}
    \item There is an annular diagram $A_p$ over $\langle X_n\mid \Csep\cap\pi_1(\Gamma)\rangle$ with boundaries labeled by (cyclically reduced) $\hat{v}\in[v]$ and $\hat{w}\in [w]$. The regions of $A_p$ have boundaries labeled by (cyclically reduced) representatives of some of the conjugacy classes labeling the edges of $p$. If $p([v], [w])$ is a path of minimal length from $[v]$ to $[w]$ in $\Wh(\Gamma)$, then the regions of $A_p$ have boundaries labeled by (cyclically reduced) representatives of all of the conjugacy classes labeling the edges of $p$ with multiplicity. If $[w]$ and $[v]$ have positive representatives, $A_p$ has no cut edges. If $[v]$ and $[w]$ are not in the component of $\Wh(\Gamma)$ containing the trivial vertex, $p$ is represented by an annular diagram.
    \item The length of $p$ is at least $\seplength{v}+\seplength{w}.$ The path $p$ is represented by a wedge of $D_p$ Van Kampen diagrams $D_1$ with boundary labeled $\inv{v}$ and $D_2$ with boundary labeled $w$ over $\langle X_n\mid \Csep\cap\pi_1(\Gamma)\rangle$. The regions of $D_p$ are labeled by (cyclically reduced) representatives of some of the conjugacy classes labeling the edges of $p$.  If $p$ is a path of minimal length, then the regions of $D_p$ are have boundaries labeled by (cyclically reduced) representatives of all of the conjugacy classes labeling the edges of $p$ with multiplicity. 
\end{enumerate}
\end{lemma}

\begin{proof}
Similar to the construction of the diagram in \Cref{lemma:paths=diagrams}, we wedge disks with boundaries labeled by $\hat{v}$, $\hat{\alpha_1}$, $\hat{\alpha_2}$, \ldots, $\hat{\alpha_l}$ in clockwise order at a point. The resulting 2-complex has boundary which freely reduces to $\hat{w}$ when read clockwise beginning at the wedge point. Then we fold adjacent edges in the clockwise ordering in the sense of Stallings.  As in \Cref{lemma:paths=diagrams}, if there is a point in the folding at which a disk $T$ appears such that the boundary of $T$ freely reduces to $1$, delete the interior of $T$ and fold the boundary edges of $T$ together. 

If no such $T$ contains the region labeled by $\hat{v}$, then the deletion of the interior of $T$ only eliminates regions labeled by the $\hat{\alpha_i}$. Deletions of regions exclusively labeled by the $\hat{\alpha_i}$ imply that $p$ is not a path of minimal length between $[v]$ and $[w]$ in $\Wh(\Gamma)$ as some subproduct of the $\hat{a_i}$ freely reduces to $1$.  If only deletions of regions labeled by the $\hat{\alpha_i}$ occur, then we obtain $A_p$ with boundaries $\hat{v}$ and $\hat{w}$ by deleting the interior of the region labeled $\hat{v}$ and any degree-one vertices and edges incident to degree-one vertices. If no deletions of regions occur, then every edge on $p([v], [w])$ is represented by a distinct region of $A_p$. See \Cref{fig:annular-path} for an example of an annular diagram corresponding to a specific path in $\Wh(\rose_n).$ 

If $[v]$ and $[w]$ have positive representatives, then all cyclically reduced representatives of $[v]$ and $[w]$ are positive.  If there were a cut edge in $A_p$, then this edge is on the boundary of $A_p$ as no internal edge of an annular diagram is a cut edge.  The cut edge cannot be shared by both boundaries of $A_p$ as cutting an edge on both boundaries does not disconnect $A_p$. Therefore this cut edge is an external edge on one of the boundaries of $A_p$.  It follows from the argument of \Cref{lemma: positive-words-wedge-disks} that no such edge can exist.

Now suppose that a disk subcomplex $T$ appears during the folding process such that the boundary of $T$ freely reduces to $1$ and that $T$ contains the region labeled $\hat{v}$.  This implies that a product of $\hat{v}$ and some of the elements of the set $\{\hat{\alpha_1}, \hat{\alpha_2}, \ldots, \hat{\alpha_l}\}$ is equal to $1$ in $\free_n$. Note that it also implies that $[v]$ and $[w]$ are both in the component of $\Wh(\Gamma)$ containing $1$, since $\hat{v}$ is then generated by separable elements and a path from $[v]$ to $[w]$ exists in $\Wh(\Gamma)$. Folding the boundary of $T$ together (obtaining a sphere) and then deleting the interior of the $\hat{v}$ region yields a Van Kampen diagram for $\inv{\hat{v}}$ over $\langle X_n\mid\Csep\cap\pi_1(\Gamma)\rangle$, so the area of $T$ is at least $1+\seplength{v}$ (the extra $1$ is for the region labeled by $v$). The product of the $\hat{\alpha_i}$ not on the interior of $T$ is $\hat{w}$, so there are at least $\seplength{w}$ such $\hat{\alpha_i}$ remaining.  Thus the length of $p$ is at least $\seplength{v}+\seplength{w}$.

\end{proof}

\begin{figure}
    \centering
    \includegraphics[scale=.47]{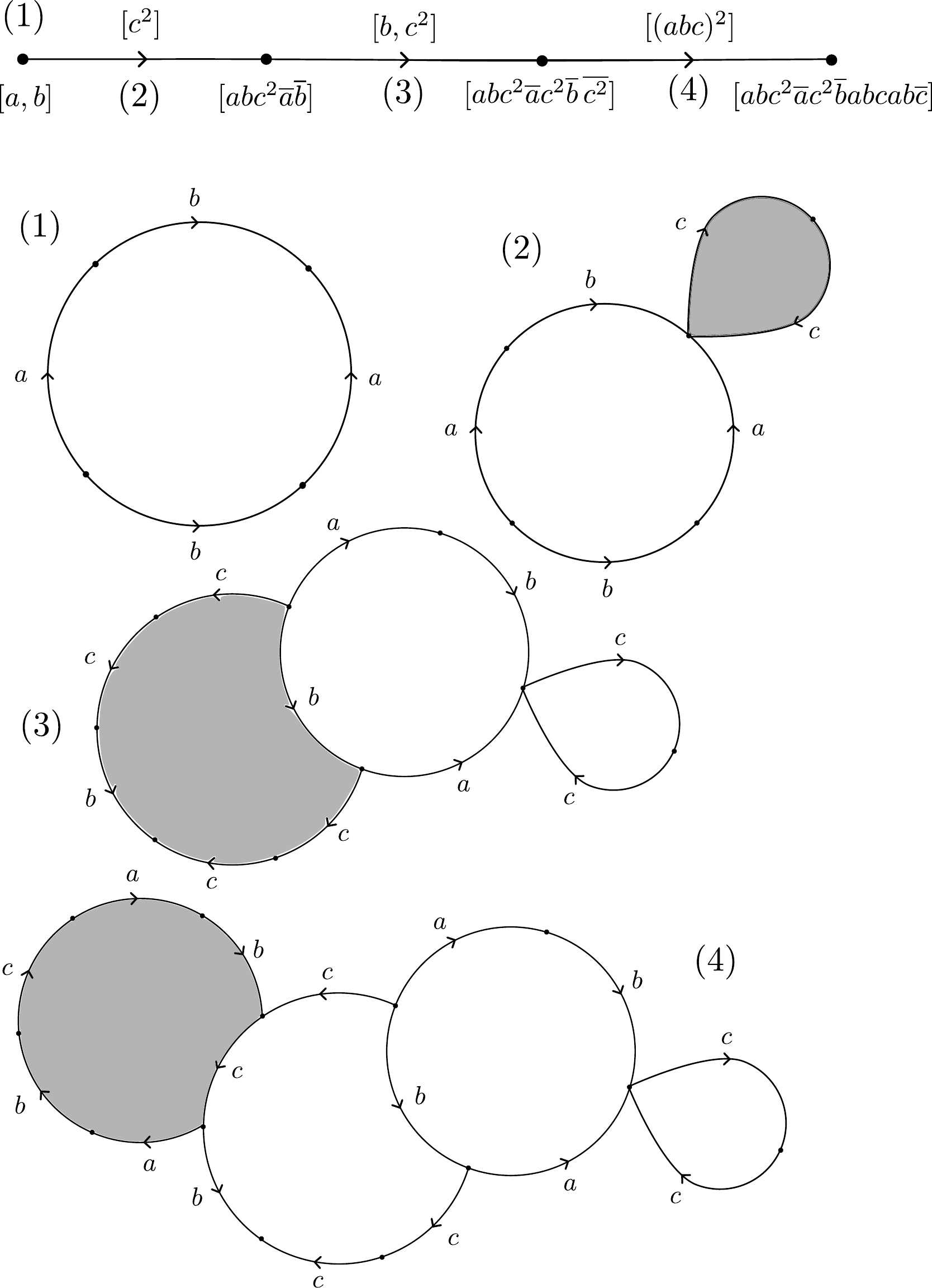}
    \caption{A path from $[a, b]$ to $[abc^2\inv{a}c^2\inv{b}abcab\inv{c}]$ in $\Wh(\rose_n)$ and a corresponding annular diagram.}
    \label{fig:annular-path}
\end{figure}

Our next lemma is very similar to \Cref{prop:arcs of disk diagrams}. It gives a bound on the number of boundary arcs of an annular diagram of area $a$.  This will be used to show that every component of $\Wh(\Gamma)$ has infinite diameter.

\begin{figure}
    \centering
    \includegraphics[scale=.6]{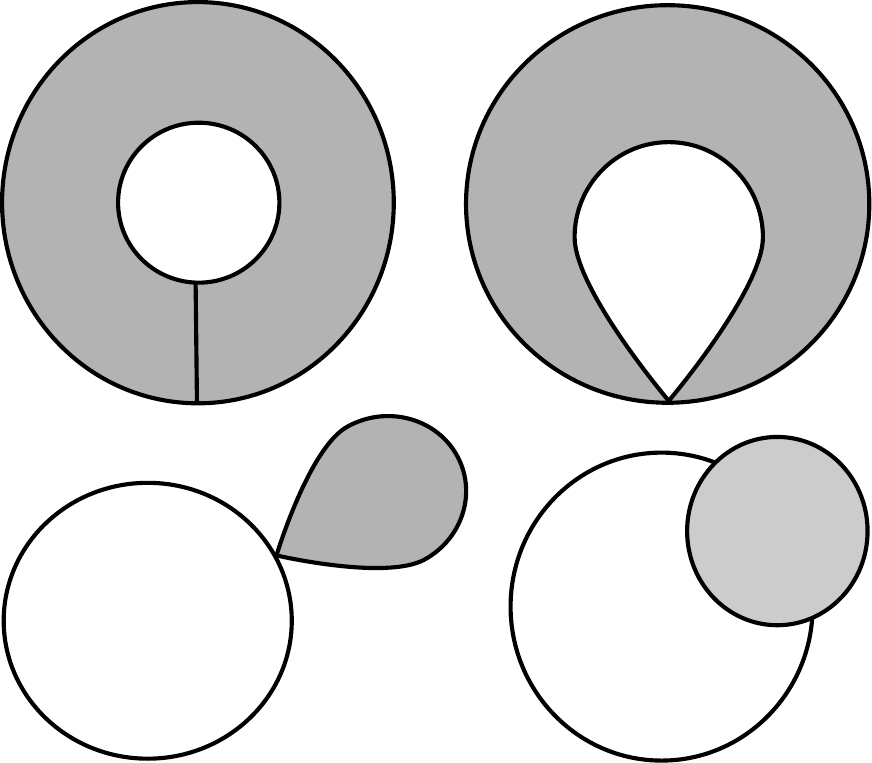}
    \caption{The annular diagrams of area one with no cut edges.}
    \label{fig:annular-diagrams-area-one}
\end{figure}

\begin{lemma}\label{lemma: boundary arcs of annular diagram}
Let $A$ be an annular diagram of area $a$ with no valence-$1$ vertices. Then the number of boundary arcs of $A$ is at most $2a$.
\end{lemma}

\begin{proof}
All diagrams in this proof are nice (\Cref{def:nice}). Like the proof of \Cref{prop:arcs of disk diagrams}, we induct on $a$. We assume that $A$ has no cut edges as we did in the proof of \Cref{prop:arcs of disk diagrams}. If $a=1$, $A$ is one of the diagrams in \Cref{fig:annular-diagrams-area-one}: note that each diagram has at most two boundary arcs. Suppose the conclusion holds for all annular diagrams of area less than $a>1$ and let $A$ be an annular diagram of area $a$. 

First, if there is an edge $e$ on the boundary of distinct regions of $A$, we remove this edge from $A$ to obtain a $2$-complex of area $a-1$. We must address the possibilities that the resulting $2$-complex either has disconnected $1$-skeleton or a valence-$1$ vertex. Suppose there is an edge $e$ in the interior of $A$ such that $e$ is on the boundary of two distinct regions, and suppose that removal of $e$ disconnects $A^{(1)}$. Then we claim that the boundaries of $A\setminus{e}$ are in the same connected component of $(A\setminus{e})^{(1)}$. If the boundaries were not in the same component, we could draw a closed loop $l$ on the interior of $A\setminus{e}$ so that $A$ deformation retracts to $l$ and $l$ does not intersect any edges of $A\setminus{e}$. Thus $e$ is on the boundary of only one region of $A$ (the region containing $l$).  The case where an internal edge $e$ disconnects $A^{(1)}$ and $e$ does not connect the boundaries of $A$ was handled in \Cref{prop:arcs of disk diagrams}, so we will not reproduce the argument here.  Similarly, if the removal of $e$ creates a valence-$1$ vertex in $A\setminus{e}$, the argument that $A$ has at most $2a$ boundary arcs is very similar to this case in \Cref{prop:arcs of disk diagrams}. 

Supposing $e$ bounds distinct regions of $A$ and the removal of $e$ results in an annular diagram with no valence-$1$ vertices, let $e$ bound regions $R_1$ and $R_2$.  The annular diagram $A\setminus{e}$ has the region $R=R_1\cup R_2$ and area $a-1$, so by induction, $A\setminus{e}$ has at most $2a-2$ boundary arcs.  Replacing $e$ adds at most $2$ boundary arcs, the number of which depends on how many endpoints of $e$ are attached at the interior of boundary edges of the nice diagram $A\setminus{e}$.

Now suppose there are no edges of $A$ on the boundary of two different regions. Construct the spine $S$ of $A$ as follows. The vertices of $S$ are of the following types:
\begin{itemize}
    \item There is one vertex of $S$ in the interior of every region of $A$;
    \item If $A$ contains a continent of area one like the first diagram in \Cref{fig:annular-diagrams-area-one}, the midpoint of the edge connecting the two boundaries of the continent is one vertex of $S$, and similarly if $A$ contains a continent like the second diagram in \Cref{fig:annular-diagrams-area-one}, the wedge point is a vertex of $S$;
    \item All wedge points between continents or between continents and edges of $A$ are vertices of $S.$
\end{itemize}
\noindent The edges of $S$ are described below.
\begin{itemize}
    \item If $A$ contains an annular region $R$ (see the first two diagrams of \Cref{fig:annular-diagrams-area-one}), draw two edges from the vertex on the interior of $R$ to the vertex of $S$ at the wedge point or interior edge of $R$ so that $R$ deformation retracts onto this part of the spine;
    \item For the remaining regions $R'$ of $A$, there is one edge of $S$ between the vertex interior to $R'$ and each wedge point on the boundary of $R'$.  We draw these edges so that they lie in the interior of $A$.
    \item Add edges of $A$ to $S$ which are not on the boundary of any region of $A$.
\end{itemize}
By construction, $S$ is a deformation retract of $A$. Suppose first that $S$ is a cycle with a nonzero number of finite trees attached. Every valence-$1$ vertex of $S$ corresponds to the interior of a region of $A$.  Let $v$ be such a vertex and $R$ its corresponding region.  Remove $R$ and its boundary from $A$ to obtain an annular diagram of area $a-1$.  By induction, the resulting annular diagram has at most $2(a-1)$ boundary arcs. Replacing $R$ and its boundary to reconstruct $A$ adds at most two boundary arcs (one corresponding to the boundary of $R$ and one more if $R$ is wedged at the interior of an edge of the nice diagram $A\setminus\{R\cup\partial R\}$).

If $S$ has no finite trees attached, $S$ is a cycle. Recall that $A$ is nice. Choose a vertex of $S$ corresponding to the interior of a region of $A$. Removing this region and one of the edges along its boundary reduces the number of boundary arcs by at most two, so by induction, the conclusion holds.
\end{proof}

We now prove that diameter of $\Wh(\rose_n)$ is infinite. Consider the presentation ${\mathcal{P}=\langle X_n\mid \Csep\rangle}$ of the trivial group.  Every expression for $w\in\free_n$ as a product of elements of $\Csep$ has an associated Van Kampen diagram $D$ for $w$ over $\mathcal{P}$. The key idea of the proof is that $\Wh(\rose_n)$ has finite diameter if and only if there is a universal bound on the areas of minimal diagrams for all words in $\free_n$ over $\langle X_n\mid \Csep\rangle$. We then take a sequence of initial subwords $(w_i)_i$ of infinite words given by \Cref{lemma: infinite-positive-filling-words} and consider a sequence of minimal nice Van Kampen diagrams $D_i$ for the $w_i$. We show that for large enough $i$, some boundary arc along a single region of $D_i$ has length at least $k$. This is a contradiction as subwords of $w_i$ of length at least $k$ are not subwords of any separable word. The proof that every component of $\Wh(\Gamma)$ has infinite diameter is similar but involves annular diagrams.

\infinitediameter*

\begin{proof}
    All diagrams in this proof are \emph{nice} (see \Cref{def:nice}). First, we prove that $\Wh(\rose_n)$ has infinite diameter. Take $W\in \widehat{\words_n^{\infty}}$ satisfying the conditions of \Cref{lemma: infinite-positive-filling-words}, so $W$ is positive, and for every initial subword $w\subset W$, every cyclic subword of $w$ of length at least $k$ is not a subword of a separable word. This implies no subword $u$ of $w$ such that $|u|\geq k$ can label a boundary arc of a diagram over $\langle X_n\mid\Csep\rangle$. Take a sequence $(w_i)_{i=1}^{\infty}$ of initial subwords of $W$ of strictly increasing length, and suppose toward a contradiction that $\Wh(\rose_n)$ has finite diameter.
    
    Let $m_i=\seplength{w_i}$ and $\prod_{j=1}^{m_i}\alpha_{i, j}$ be expressions for some positive $\hat{w_i}\in[w_i]$ as products of elements of $\Csep$. These products correspond to paths $p_i$ of length $m_i$ in $\Wh(\rose_n)$ with initial vertex $[1]$ and terminal vertex $[w_i]$.  By the finite diameter assumption, there is a universal bound on the lengths of minimal paths between vertices, hence there is a universal bound $B$ on $\seplength{w_i}$.  By \Cref{lemma:paths=diagrams}, these expressions for $w_i$ correspond to nice Van Kampen diagrams $D_i$ for $w_i$. The number of regions of $D_i$ is $m_i$.  By \Cref{prop:arcs of disk diagrams}, the number of boundary arcs of $D_i$ is bounded above by $2(B-1)$ for all $i$.

    Since all $\hat{w_i}$ are positive, $|\hat{w_i}|=|\partial D_i|$. Choose $i$ large enough so that $|\hat{w_i}|>2k(B-1)$.   By the pigeonhole principle, there is some boundary arc of $D_i$ of length at least $k$, which contradicts the assumption that each region in this diagram has boundary label in $\Csep$. Thus no universal bound on the areas of the $D_i$ exists. This concludes the proof that $\Wh(\rose_n)$ has infinite diameter.

    We move on to the proof for $\Gamma\neq\rose_n$. Observe first that every component $C$ of $\Wh(\Gamma)$ contains a vertex representing the conjugacy class of a positive word as $\Gamma$ represents a finite-index subgroup of $\free_n$. To see this, take a cyclically reduced $\hat{w}\in[w]\in V(C)$, and suppose $\hat{w}$ contains $y^{-m}$ as a subword where $y\in X_n.$ Let $l$ be the minimal power of $y$ such that $y^l$ lifts to a loop in $\Gamma$. Multiplying $\hat{w}$ by appropriate conjugates of $y^l$ creates a path from $[w]\in V(C)$ to $[w']\in V(C)$ where $w'$ satisfies the property that all subwords $y^{-k}$ of $w'$ satisfy $k<l$. Thus we assume $m<l$. Beginning at any vertex of $\Gamma$, the paths labeled $y^{-m}$ and $y^{l-m}$ end at the same vertex of $\Gamma$ as $\Gamma$ is regular. Taking this path in $\Gamma$ is equivalent to multiplying $w'$ by an appropriate conjugate of $y^l$, which is equivalent to traveling from $w'$ along an edge labeled $[y^l]$ in $\Wh(\Gamma).$ Thus we can find a vertex in $C$ which has no copies of $\inv{y}$ where $y$ is any element of $X_n$.
    
    Let $C$ be a component of of $\Wh(\Gamma)$ and $w$ a positive word so that $[w]\in C$. Similar to \Cref{lemma: infinite-positive-filling-words}, we can let $v$ be a product of positive powers of elements of $X_n$ which lift to loops in $\Gamma$ such that the Whitehead graph of $v$ is connected, has no cut vertices, and has the property that removing any edge from the Whitehead graph of $v$ results in a Whitehead graph with no cut vertices. As was the case in \Cref{lemma: infinite-positive-filling-words}, if $v$ contains two copies of all positive length-two words in the generators $X_n$, $v$ is necessarily such a word. See \Cref{example:v-for-small-n} for examples of words labeling loops in $\Gamma$ for small $n.$ By \Cref{lemma: subwords-separable-words}, $v$ is not a subword of any separable word.  Define $W=w\prod_{i=1}^{\infty}v$, and let $w_i = wv^i$, so that $\{w_i\}_{i=1}^{\infty}$ is an infinite sequence of initial subwords of $W.$ These subwords are represented by paths in $C$ beginning at $[w]$ and ending at $[w_i]$ as we ensured $v$ is a product of words labeling loops in $\Gamma$. Let $k=4(|w|+|v|).$ Observe every subword of $W$ of length $\frac{k}{2}$ is not a subword of any separable word as it contains a cyclic permutation of $v$. Consider the sequence of diagrams representing geodesic segments from $[w]$ to the $[w_i]$ in $C$. Then, as in the proof that $\Wh(\rose_n)$ has infinite diameter, if there is some upper bound $B$ on lengths of these paths, then $B$ is also an upper bound on the areas of diagrams representing these paths. Paths from $[w]$ to $[w_i]$ in $C$ are either represented by annular diagrams over $\mathcal{P}=\langle X_n\mid \Csep\cap \pi_1(\Gamma)\rangle$ with boundaries labeled $[w]$ and $[w_i]$ or by a wedge of Van Kampen diagrams over $\mathcal{P}$ with boundaries labeled $[\inv{w}]$ and $[w_i]$.
    
    In the former case, by \Cref{lemma: boundary arcs of annular diagram}, each diagram in the sequence has at most $2B$ boundary arcs. Recall that, as in the bottom annular diagrams of \Cref{fig:annular-diagrams-area-one}, the inner and outer boundaries of an annular diagram may share boundary \emph{edges} which are not part of the boundary of any region of the diagram. Boundary \emph{arcs}, by contrast, are by definition on the boundaries of regions of diagrams. If the two boundaries of the diagram do not share edges, the sum of the lengths of the boundaries, $2|w|+|w_i|$, will eventually be larger than $Bk$, so some boundary arc will have length at least $\frac{k}{2}$. If the two boundaries do share edges,  the sum of the lengths of their shared edges is at most the length of $w$, so the sum of the lengths of the boundary arcs is at least $|w_i|$. Thus there is an $i$ large enough so that some boundary arc must have length at least $\frac{k}{2}$. Any subword of $w_i$ of length $\frac{k}{2}$ cannot be a subword of a separable word, so this is a contradiction. 
    
    In the latter case, paths are represented wedge sums of Van Kampen diagrams with boundaries $\inv{w}$ and $w_i$. The diagram $D$ labeled by $w_i$ has fewer than $2(B-1)$ boundary arcs, which is again a contradiction as there is an $i$ large enough so that $|w_i|>(B-1)k$, implying it contains a boundary arc of length at least $\frac{k}{2}$. This boundary arc is labeled by a word which cannot be a subword of any separable word. \qedhere
\end{proof}

\begin{example} \label{example:v-for-small-n}
Suppose that $\Gamma$ is a finite regular cover of $\rose_n$ of index $l$. The following words are loops in $\Gamma$ which are not subwords of any separable word by \Cref{lemma: subwords-separable-words}.
    \begin{enumerate}
        \item For $n=2$, $v=(a^lb^l)^2$;
        \item For $n=3,$ $v=a^lb^lc^lb^la^lc^l$;
        \item For $n=4,$ $v=a^lb^lc^ld^lb^la^lc^lb^ld^lc^la^ld^l$.
    \end{enumerate}
\end{example}

\noindent A corollary of \Cref{thm:infinite-diameter} is below.  Note that this result was previously stated in a 2003 paper of Bardakov, Shpilrain, and Tolstykh (see Theorem 2.1 of \cite{bardakov2003palindromic}). 

\cayinfdiameter*

\begin{proof}
Observe that $\Wh(\rose_n)$ can be obtained as a quotient of $\Cay(\free_n, \Csep)$.  First, identify all vertices $v, w\in\Cay(\free_n, \Csep)$ satisfying $\lab(v)\in[\lab(w)]$. Here, the label of ${v\in\Cay(\free_n, \Csep)}$ is the element of $\free_n$ it represents. The result is a graph with edges labeled by elements of $\Csep$ and vertices labeled by conjugacy classes of $\free_n$.  Then, for all edges $e_1, e_2$ in the resulting graph satisfying $\lab(e_1)\in[\lab(e_2)]$, $\iota(e_1)=\iota(e_2),$ and $\tau(e_1)=\tau(e_2)$, identify $e_1$ and $e_2$. Relabel all edges $e$ in the resulting graph by $[\lab(e)]$ to obtain $\Wh(\rose_n)$.  This quotient map is 1-Lipschitz, so since $\Wh(\rose_n)$ has infinite diameter, so does $\Cay(\free_n, \Csep).$ The set $\Csep$ contains $\curves\prim$, so $\Cay(\free_n, \curves\prim)$ has infinite diameter, as well.
\end{proof}

\section{Nonhyperbolicity of the separability complex}\label{sec: nonhyperbolicity}

In this section, we prove that $\Wh(\rose_n)$ is nonhyperbolic for $n>1$ by finding, for each $n$, an infinite set of arbitrarily fat geodesic triangles. Before diving into the technicalities we give a detailed outline of the proof to serve as a guide to the reader. 

\medskip

\begin{enumerate}[wide, labelindent=0pt]
    \item[{\bf Step 1:}] We find a particular infinite set of positive separable words $\{r_i(n)\}_{i=1}^{\infty}$ such that none of them is contained in a free factor of rank less than $n-1$. For $n>2$, we show that if $i\neq j$ and $r_i$ and $r_j$ are subwords of a cyclically reduced $w\in\free_n$, then $w$ is inseparable. For $n=2$, we show that if $|j-i|>1$ and $r_i$ and $r_j$ are subwords of a cyclically reduced $w\in\free_n$, then $w$ is inseparable (see \Cref{lemma: only one ri}).

    \medskip
    
    \item[{\bf Step 2:}] For $n>2$, let \[w(a, k, s, n):=\prod_{i=a}^{a+s-1} r_i(n)^k,\] and for $n=2$, let \[w(a, k, s, n):=\prod_{i=0}^{s-1} r_{a+2i}(n)^k.\] We show that, for all $n$, and large enough $a$ and $k$, the word $w(a, k, s, n)$ has separable length $s$ (see \Cref{prop: seplength-w}). To get the lower bound, let $D$ be a nice Van Kampen diagram for $w(a, k, s, n)$ over $\langle X_n\mid \Csep\rangle$, and suppose toward a contradiction that $\area(D)<s$. By \Cref{prop:arcs of disk diagrams}, Van Kampen diagrams of area less than $s$ have at most $2(s-2)$ boundary arcs.  As a consequence of Step 1, we obtain an upper bound on the length of any boundary arc of $D$. Since $w(a, k, s, n)$ is positive, the length of  $w(a, k, s, n)$ is the sum of the lengths of the boundary arcs of $D$. Given sufficiently large $a$ and $k$, we show that the sum of the lengths of all of the boundary arcs of $D$ is less than the length of $w(a, k, s, n)$, a contradiction. 

    \medskip
    
    \item[{\bf Step 3:}] For $n>2$, let $b=a+s$, and for $n=2$, let $b=a+2s.$ We show that, for $a$ and $k$ large enough, the distance $\dsep{[w(a, k, s, n)]}{[w(b, k, s, n)]}=2s$ using a proof similar to the proof that $\seplength{w(a, k, s, n)}=s$. By \Cref{lemma:existence-annular}, a path in $\Wh(\rose_n)$ from $[w(a, k, s, n)]$ to $[w(b, k, s, n)]$ of length less than $\seplength{v}+\seplength{w}$ corresponds to an annular diagram with boundaries labeled by the positive words $w(a, k, s, n)$ and $w(b, k, s, n)$. We suppose toward a contradiction that $A$ is an annular diagram of area less than $2s$ with boundaries labeled as above. Let $O$ be the total length of the intersection of the two boundaries of $A$.  In \Cref{prop: dsepab}, we show that there are $a$ and $k$ sufficiently large so that the length of the boundary of $A$ is less than $|w(a, k, s, n)|+|w(b, k, s, n)|-O$. Here, by \emph{length of the boundary of $A$}, we mean the sum of the lengths of all edges on the boundary of $A$. Recall that positivity implies there are no cut edges of $A$, so the length of the boundary of an annular diagram with boundaries labeled by $w(a, k, s, n)$ and $w(b, k, s, n)$ is equal to $|w(a, k, s, n)|+|w(b, k, s, n)|-O$. Thus there are $a$ and $k$ sufficiently large such that there is no path in $\Wh(\rose_n)$ from $[w(a, k, s, n)]$ to $[w(b, k, s, n)]$ of length less than $2s$.

    \medskip
    
    \item[{\bf Step 4:}] There are at least two paths of length $2s$ from $[w(a, k, s, n)]$ to $[w(b, k, s, n)]$, one of which goes through $1\in\Wh(\rose_n)$, and the other goes through \[[w(a, k, s, n)w(b, k, s, n)]=[w(a, k, 2s, n)]\] (see \Cref{fig:fat-triangle}). The latter path lies above a ball of radius $s$ around $1$ by \Cref{prop: seplength-w}.
    For $\delta < s$, the vertex $[w(a, k, 2s, n)]$ lies outside the $\delta$-neighborhood of the sides $([1], [w(a, k, s, n)])$ and $([1], [w(b, k, s, n)])$. Since these fat triangles exist for all separable lengths, and since there are elements of $\free_n$ with arbitrarily large separable length, we conclude that there is no $\delta$ satisfying the $\delta$-thin condition for hyperbolicity, completing the proof of \Cref{theorem:nonhyperbolicity}. 
\end{enumerate} 

\begin{figure}
    \centering
    \includegraphics[scale=.4]{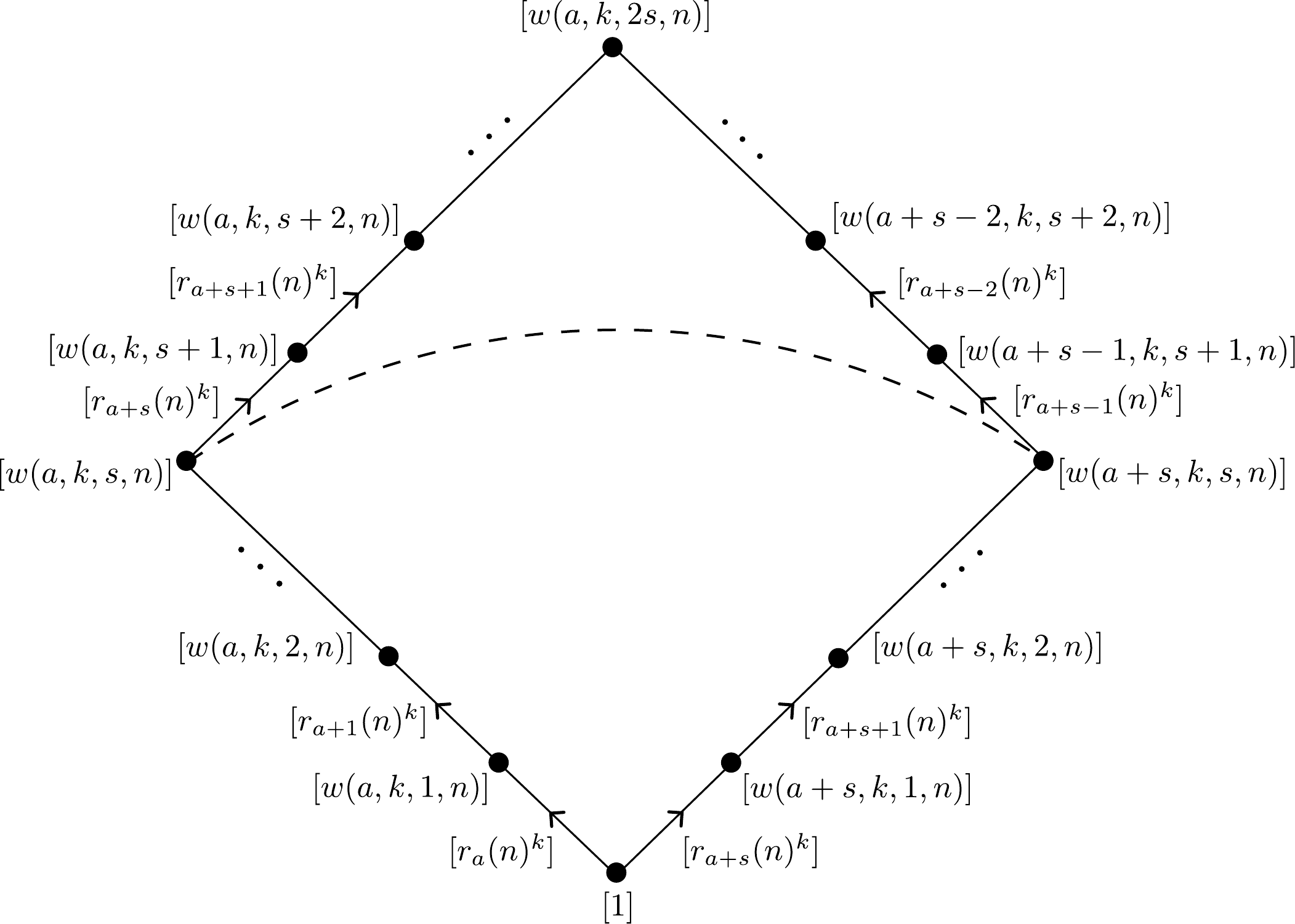}
    \caption{We will show that, for $n>2,$ this is a geodesic triangle on the vertices $\{[1], w(a, k, s, n), w(b, k, s, n)\}$ in $\Wh(\rose_n)$. The dashed line represents the boundary of a ball of radius $s$ centered at the vertex $[1]$. A similar triangle with slightly different vertex and edge labels exists for $n=2$. See \Cref{fig:fat-triangle-n-2}.}  
    \label{fig:fat-triangle}
\end{figure}

\subsection{Step 1: Properties of $\mathbf{r_i(n)}$}
For $n>3$ define:  
\[
    r_i(n):= (x_1^2x_2^2\ldots x_{n-2}^2x_{n-1}^ix_n)^2x_{n-1}^2x_{n-2}^2\ldots x_2^2;
\]
for $n=3$, define:
\[  
    r_i(n):= \big((x_1^2x_2^ix_3)^2x_2^2\big)^2;
\]
for $n=2,$ define:
\[  
    r_i(n):=(x_1^ix_2)^2.
\]
We will refer to $r_i(n)$ as $r_i$ when $n$ is clear from context. 
The need for separate cases for $n=3$ and $n=2$ will become clear. Let $i\geq 0$ and $n>2$. The set \[\{x_1, x_2, \ldots, x_{n-1}, x_1^2x_2^2\cdot\ldots\cdot x_{n-2}^2x_{n-1}^ix_n\}\] is a basis for $\free_n$. For $n=2$, $\{x_1, x_1^2x_2\}$ is a basis for $\free_n$. Since $r_i$ is a product of squares of $n-1$ of these basis elements, $r_i$ is separable for all $n$. Note that we take $r_i$ to be a product of squares because this guarantees the presence of edges between $x_j$ and $\inv{x_j}$ for $j<n$ in the Whitehead profile of $r_i$. This will matter when we prove that no separable word can have both $r_i$ and $r_j$ as subwords if $i\neq j$ (see \Cref{lemma: only one ri}).

Let $\phi$ be the following automorphism of $\free_n$:

\[   \phi(x_i) = 
     \begin{cases}
        x_{n-1}x_i\inv{x_{n-1}} &\quad\text{if } i<n-1;  \\
        x_{n-1} &\quad\text{if } i=n-1; \\
        x_n\inv{x_{n-1}} &\quad\text{if } i=n. \\
     \end{cases}
\]

\noindent Recall that $\words_n$ is the set of finite, possibly unreduced words with letters in $X_n^*$ and that an automorphism $\sigma\in\Aut(\free_n)$ induces a map $\Tilde{\sigma}:\words_n\rightarrow\words_n$ which sends $w=y_1y_2\ldots y_k$ for $y_i\in X_n^*$ to $\phi(y_1)\phi(y_2)\ldots\phi(y_k)$. For $w\in\words_n$, $\Tilde{\sigma}(w)$ is the possibly freely unreduced image of $w$ under $\Tilde{\sigma}$, and $\widehat{\Tilde{\sigma}(w)}$ is the free reduction of $\Tilde{\sigma}(w)$ in $\words_n.$
\begin{lemma}\label{lemma:phi-only-cancels-xn-1}
    Let $w\in\words_n$ be freely reduced and let $\Tilde{\phi}$ be the map $\words_n\rightarrow\words_n$ corresponding to $\phi$.  Then the only letters in $\Tilde{\phi}(w)$ not present in $\widehat{\Tilde{\phi}(w)}$ are copies of $x_{n-1}$ and its inverse.
\end{lemma}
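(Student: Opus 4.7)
The plan is to tabulate $\Tilde{\phi}(y)$ for each $y\in X_n^*$ and exploit the fact that every image contains at most one letter outside $\{x_{n-1}, \overline{x_{n-1}}\}$.

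First, I would compute explicitly: for $j<n-1$, $\Tilde{\phi}(x_j^{\pm 1}) = x_{n-1}\, x_j^{\pm 1}\, \overline{x_{n-1}}$; $\Tilde{\phi}(x_{n-1}^{\pm 1}) = x_{n-1}^{\pm 1}$; $\Tilde{\phi}(x_n) = x_n\overline{x_{n-1}}$; and $\Tilde{\phi}(\overline{x_n}) = x_{n-1}\overline{x_n}$. I would call a letter of $\Tilde{\phi}(w)$ \emph{special} if it is not $x_{n-1}$ or $\overline{x_{n-1}}$, and decompose $\Tilde{\phi}(w) = N_0\,\alpha_1\, N_1\,\alpha_2\cdots \alpha_c\, N_c$, where $\alpha_1,\ldots,\alpha_c$ are the special letters in order and each $N_i$ lies in $\{x_{n-1},\overline{x_{n-1}}\}^*$. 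Notice that each $\alpha_i$ is determined by the $y_l$ in $w$ it originates from, and the letters comprising each $N_i$ arise from that same $y_l$, from a contiguous block of $y_m$'s that are $x_{n-1}^{\pm 1}$, and from the next $y_{l'}$ producing a special letter.

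The strategy is to prove by induction on the number of free-reduction steps performed on $\Tilde{\phi}(w)$ that no special letter is ever eliminated. Since no special letter is the inverse of any $x_{n-1}^{\pm 1}$, every cancellable adjacent pair at any stage is either internal to some $N_i$ (which is harmless, as it only removes copies of $x_{n-1}^{\pm 1}$) or is a collision of consecutive special letters $\alpha_i, \alpha_{i+1}$ whose separating buffer $N_i$ has reduced to the empty word. I would observe that no inner buffer $N_i$ (for $1\le i\le c-1$) is empty at the start: the only initial configuration with $|N_i|=0$ forces $y_l y_{l+1} = \overline{x_n}\, x_n$, contradicting that $w$ is freely reduced.

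The technical heart, and the main obstacle, is ruling out the case in which $N_i$ starts nonempty but reduces to $\epsilon$ and yields $\alpha_i = \overline{\alpha_{i+1}}$. I would dispatch this with a short case analysis on the decomposition $N_i = A\cdot M\cdot B$, where $A$ is the $x_{n-1}^{\pm 1}$-tail of $\Tilde{\phi}(y_l)$ after $\alpha_i$, $B$ is the $x_{n-1}^{\pm 1}$-head of $\Tilde{\phi}(y_{l'})$ before $\alpha_{i+1}$, and $M$ is the concatenation of the intervening $y_m$'s (all $x_{n-1}^{\pm 1}$, and forming a signed power of $x_{n-1}$ by free reducedness of $w$). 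In each of the finitely many sub-cases in which $A\cdot M\cdot B$ reduces to $\epsilon$, the allowed values of $\alpha_i$ and $\alpha_{i+1}$ can be read off from $y_l$ and $y_{l'}$ using the table above, and one checks directly that $\alpha_i = \overline{\alpha_{i+1}}$ is either outright impossible or forces $y_l y_{l+1}$ to be an inverse pair adjacent in $w$, contradicting free reducedness. Once this case check is complete, the invariant persists through the entire reduction, and hence the only letters removed in passing from $\Tilde{\phi}(w)$ to $\widehat{\Tilde{\phi}(w)}$ are copies of $x_{n-1}$ and $\overline{x_{n-1}}$.
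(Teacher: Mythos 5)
Your proof is correct, and it rests on the same basic mechanism as the paper's --- every letter of $\Tilde{\phi}(w)$ outside $\{x_{n-1},\overline{x_{n-1}}\}$ originates from a unique letter of $w$ and is insulated on both sides by buffers of $x_{n-1}^{\pm 1}$'s --- but your bookkeeping is organized differently and is, in places, tighter. The paper starts from a candidate cancelling pair $y\cdots\overline{y}$ of non-$x_{n-1}$ letters, argues somewhat informally that one may pass to an innermost such pair so that only $x_{n-1}^{\pm 1}$'s lie between them, and then computes that the intervening buffer reduces to $x_{n-1}^{p}$ with $p\neq 0$ because $w$ is freely reduced. Your block decomposition $N_0\alpha_1 N_1\cdots\alpha_c N_c$ combined with induction on reduction steps replaces the innermost-pair maneuver with the cleaner observation that a special letter can only die against an adjacent special letter whose buffer $N_i$ has evaporated; in particular you do not need to assume in advance that the colliding letters are a matched pair coming from matched letters of $w$ --- that falls out of the case analysis. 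Your explicit split $N_i=A\cdot M\cdot B$ also handles the asymmetric images of $x_n$ and $\overline{x_n}$ (whose tail, respectively head, is empty), a configuration the paper's displayed buffer $y\,\overline{x_{n-1}}\,x_{n-1}^{p}\,x_{n-1}\,\overline{y}$ does not literally cover. The price is a slightly longer finite case check; the payoff is an argument with no informal steps.
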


\begin{proof}
Observe that the inverse of $\phi$ is the map $\inv{\phi}$ below:
\[   \inv{\phi}(x_i) = 
     \begin{cases}
        \inv{x_{n-1}}x_ix_{n-1} &\quad\text{if } i<n-1;  \\
        x_{n-1} &\quad\text{if } i=n-1; \\
        x_nx_{n-1} &\quad\text{if } i=n. \\
     \end{cases}
\]  Let $y_i\in\{x_i^{\pm 1}\}$ where $i$ satisfies $1\leq i \leq n$ and $i\neq n-1$.  Note that $\phi(y_i)$ and $\inv{\phi}(y_i)$ both contain exactly one copy of $y_i$. Also, if $y_i$ is contained in $\phi(y)$ or $\inv{\phi}(y)$ for $y\in X_n^*$, then $y=y_i$. Thus every copy of $y_i$ in $\Tilde{\phi}(w)$ is contained in the image of a distinct copy of $y_i$ in $w$. If there is a copy of $y_i$ in $\Tilde{\phi}(w)$ not present in $\widehat{\Tilde{\phi}(w)}$, then $\widetilde{\inv{\phi}}(\widehat{\Tilde{\phi}(w)})$ contains fewer copies of $y_i$ than $w$ does, so $\inv{\phi}(\phi(w))\neq w$.

\end{proof}

Our second lemma states that, for $n\geq 2$, no separable word can have subwords $r_i$ and $r_j$ if $i\neq j$ (or if $|j-1|>1$ if $n=2$).  It is used to bound the lengths of boundary arcs of diagrams over $\langle X_n\mid \Csep\rangle$ with boundary $w(a, k, s, n)$.  The bound is used in the proofs of \Cref{prop: seplength-w} and \Cref{prop: dsepab} which, respectively, compute $\seplength{w(a, k, s, n)}$ and $\dsep{w(a, k, s, n)}{w(b, k, s, n)}.$

\begin{lemma}\label{lemma: only one ri}
    Let $i, j$ satisfy $1<i<j$ and let $n>1$.  
    \begin{enumerate}
        \item For $n\geq 3$ and $i\neq j$, no separable $w\in\free_n$ can have both $r_i$ and $r_j$ as disjoint subwords;
        \item For $n=2$ and $j>i+1$, no separable $w\in\free_n$ can have both $r_i$ and $r_j$ as disjoint subwords.
    \end{enumerate}
\end{lemma}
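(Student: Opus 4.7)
The plan is to induct on $\min(i,j)$, using the Whitehead automorphism $\phi$ defined just above as a reduction tool.

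A direct computation—using $\phi(x_k^2)=x_{n-1}x_k^2\overline{x_{n-1}}$ for $k<n-1$, $\phi(x_{n-1}^i)=x_{n-1}^i$, and $\phi(x_n)=x_n\overline{x_{n-1}}$—shows that for $n\geq 3$ and $i\geq 1$,
\[
\phi(r_i(n)) \;=\; x_{n-1}\,r_{i-1}(n)\,\overline{x_{n-1}}
\]
as freely reduced words; for $n=2$, one obtains $\phi(r_i(2))=x_1^i x_2 x_1^{i-1}x_2\overline{x_1}$, whose cyclic reduction is $r_{i-1}(2)$. By \Cref{lemma:phi-only-cancels-xn-1}, the only cancellations when passing from $\widetilde{\phi}(w)$ to $\phi(w)$ involve letters $x_{n-1}^{\pm 1}$. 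A short case analysis at the boundary of each $\phi(r_i)$ block---checking that the letters $x_{n-1}^{\pm 1}$ adjacent to $r_{i-1}$ can cancel at most once on each side, with no propagation into the interior---shows that if the cyclically reduced $w$ contains $r_i$ and $r_j$ as disjoint subwords, then the cyclic reduction of $\phi(w)$ contains $r_{i-1}$ and $r_{j-1}$ as disjoint subwords, with the gap $|i-j|$ preserved.

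Since separability is invariant under $\phi\in\Aut(\free_n)$, this inductive step reduces the lemma to the base case $\min(i,j)=0$: namely $i=0$, $j\geq 1$ for $n\geq 3$, and $i=0$, $j\geq 2$ for $n=2$. For the base case I will directly verify that the Whitehead profile $\sigma(\Omega(w))$ is connected and has no cut vertex; by \cite[Theorem~2.4]{stallings1999whitehead} (step~(3) of \Cref{theorem: Whitehead algorithm}) this forces $w$ to be inseparable. Every length-two subword of $r_0$ and $r_j$ appearing in $w$ contributes an edge to $\sigma(\Omega(w))$, so it suffices to check that the union of profile edges from $r_0$ and $r_j$ on the vertex set $X_n^*$ has no cut vertex---a finite check over the $2n$ vertices.

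The main obstacle is this base-case verification. The definitions of $r_i$ are tuned precisely so that the combined profile is robustly $2$-connected: the squared basis elements yield loop edges $\{x_k,\overline{x_k}\}$ for $k<n$; the blocks $x_{n-2}^2 x_n$ in $r_0$ and $x_{n-2}^2 x_{n-1}^j x_n$ in $r_j$ contribute the cross-edges $\{x_{n-2},\overline{x_n}\}$ and $\{x_{n-2},\overline{x_{n-1}}\},\{x_{n-1},\overline{x_n}\}$ respectively---without the edge $\{x_{n-2},\overline{x_n}\}$ from $r_0$, the vertex $\overline{x_n}$ would have degree one in the profile and force $x_{n-1}$ to be a cut vertex; the tail $x_{n-1}^2 x_{n-2}^2\cdots x_2^2$ supplies the backward transitions; and the extra squaring in $r_i(3)=((x_1^2 x_2^i x_3)^2 x_2^2)^2$ plays the analogous role for $n=3$. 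The $n=2$ hypothesis $j>i+1$ is necessary because in the marginal base case $(r_0(2),r_1(2))=(x_2^2,(x_1x_2)^2)$, the combined profile consists of $\{x_2,\overline{x_2}\},\{x_1,\overline{x_2}\},\{x_2,\overline{x_1}\}$, in which $x_2$ is a cut vertex; when $j\geq 2$ the block $x_1^j$ additionally contributes $\{x_1,\overline{x_1}\}$, completing the profile to a $4$-cycle with no cut vertex.
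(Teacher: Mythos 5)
Your overall strategy is the same as the paper's: descend on the exponent using the Whitehead automorphism $\phi$, control the cancellation with \Cref{lemma:phi-only-cancels-xn-1}, and finish by checking that a union of Whitehead profiles is connected with no cut vertex. (The paper applies $\phi^i$ in one shot and names the surviving subwords $u_i(i)$ and $u_j(i)$; your one-step induction on $\min(i,j)$ is the same computation repackaged.) Your base-case analysis, including the explanation of why $j>i+1$ is needed when $n=2$, matches the paper's treatment of the profiles of $u_i(i)$ and $u_j(i)$ and is correct.

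The gap is in the inductive step for $n=3$. You claim that if the cyclically reduced $w$ contains $r_i$ and $r_j$ as disjoint subwords, then the reduction of $\phi(w)$ contains $r_{i-1}$ and $r_{j-1}$ as disjoint subwords, with boundary cancellation confined to the flanking letters. For $n>3$ this is clean: $r_{i-1}$ begins with $x_1$ and ends with $x_2$, and \Cref{lemma:phi-only-cancels-xn-1} only permits $x_{n-1}^{\pm 1}$ to cancel, so the block $x_{n-1}r_{i-1}\overline{x_{n-1}}$ insulates $r_{i-1}$ completely; the $n=2$ case also works out. But for $n=3$ the reduced form of $\phi(r_i(3))$ is $x_2(x_1^2x_2^{i-1}x_3)^2x_2^2(x_1^2x_2^{i-1}x_3)^2x_2$, a cyclic permutation of $r_{i-1}(3)$: since $r_{i-1}(3)$ ends in $x_2^2$ and $x_2=x_{n-1}$, the trailing $x_2^2\overline{x_2}$ of $x_2\,r_{i-1}\,\overline{x_2}$ collapses, and the block ends in a single $x_2$ rather than the $x_2^2$ that a literal copy of $r_{i-1}(3)$ requires. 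Whether the missing $x_2$ is supplied depends on the letter $y$ of $w$ immediately following $r_i$: if $y\in\{x_1,\overline{x_1},x_2,\overline{x_3}\}$ then $\phi(y)$ begins with $x_2$ and $r_{i-1}$ survives, but if $y=x_3$ then $\phi(y)=x_3\overline{x_2}$ and only $r_{i-1}$ with its last letter deleted survives. Since your inductive hypothesis concerns exact copies of $r_{i-1}$ and $r_{j-1}$, the induction does not close in that sub-case. The repair is exactly what the paper does: do not insist that $r_{i-m}$ itself survives, but instead track an explicit core subword (the paper's $u_m(i)$, chosen with endpoints insulated from the cancellation zone) and check that its Whitehead profile already carries every edge needed for the cut-vertex argument; alternatively, strengthen your inductive hypothesis to tolerate descendant blocks clipped by one letter at each end.
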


\begin{proof}
\noindent For all $n$, we will show that, even though the Whitehead profile of $w$ may contain a cut vertex, the Whitehead profile of the image of $w$ after $i$ steps of Whitehead's algorithm does not. Therefore, $w$ is inseparable. We divide the proof into three cases: $n>3, n=3,$ and $n=2$.
\begin{enumerate}[wide, labelindent=0pt]
\item[(1i)] Let $n>3$ and assume $w\in\free_n$ has subwords $r_i$ and $r_j$ where $j>i$.  A subgraph of the Whitehead profile of $w$ can be seen in \Cref{fig:Whitehead-profile-rin}. Observe that the only cut vertex of this graph is $x_{n-1}.$ The partition of $X_n^*$ associated with this Whitehead profile is $\{x_{n-1}, \inv{x_n}\}\bigsqcup\{x_1, \inv{x_1}, x_2, \inv{x_2}, \ldots, x_{n-2}, \inv{x_{n-2}}, \inv{x_{n-1}}, x_n\}$, and the Whitehead automorphism associated with this partition is the $\phi$ defined above \Cref{lemma:phi-only-cancels-xn-1}

\begin{figure}
    \centering
    \includegraphics[scale=.45]{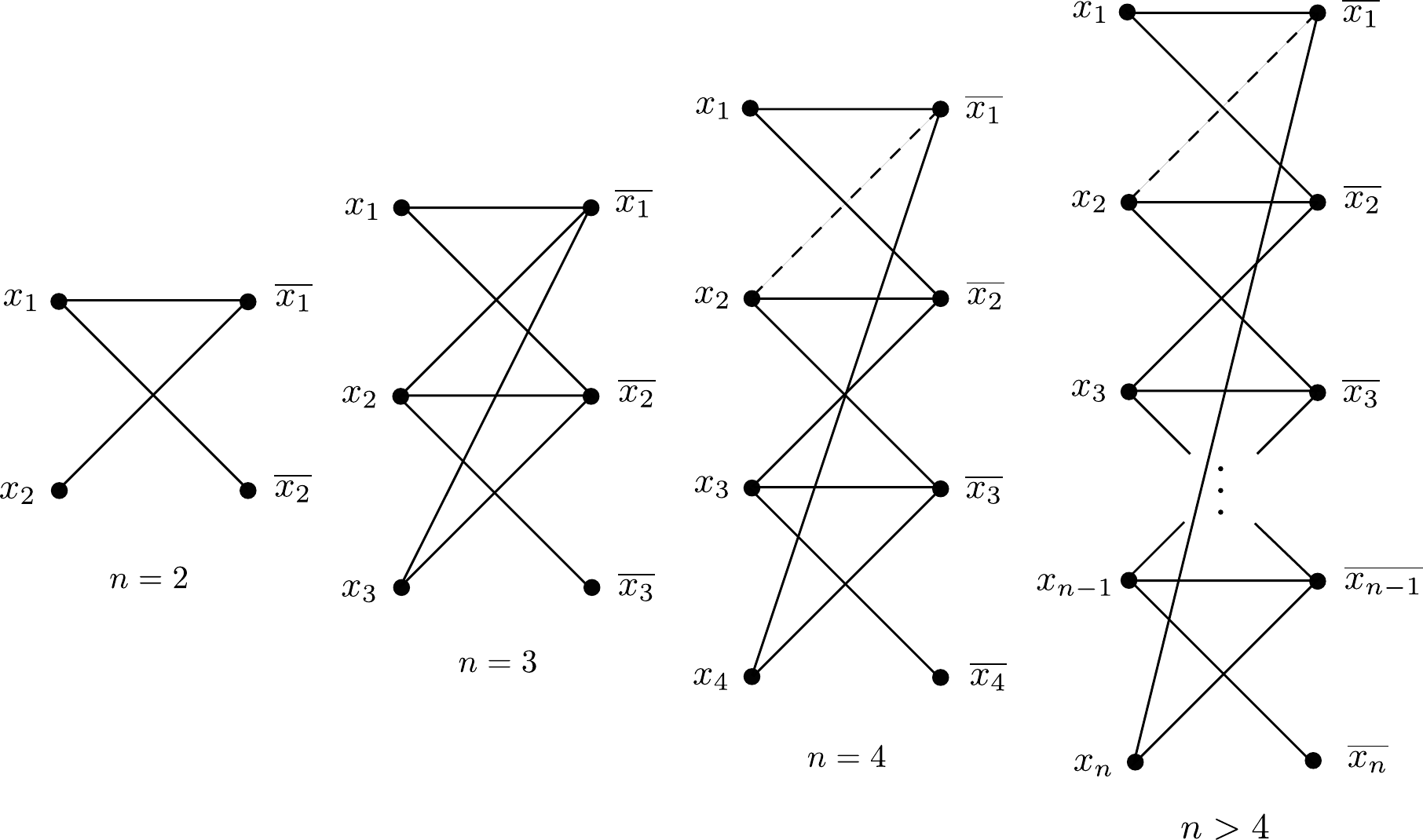}
    \caption{The Whitehead profiles of $r_i(n)$ for $n\geq 2$.}
    \label{fig:Whitehead-profile-rin}
\end{figure}

We have:
\[
\phi(r_i)=x_{n-1}(x_1^2x_2^2\ldots x_{n-2}^2x_{n-1}^{i-1}x_n)^2x_{n-1}^2x_{n-2}^2\ldots x_2^2\inv{x_{n-1}}. 
\]

\noindent Since $i>1$, the Whitehead profile of $\phi(r_i)$ contains the Whitehead profile in \Cref{fig:Whitehead-profile-rin}. A calculation shows that 
\[
    \phi^m(r_i) = x_{n-1}^m(x_1^2x_2^2\ldots x_{n-2}^2x_{n-1}^{i-m}x_n)^2x_{n-1}^2x_{n-2}^2\ldots x_2^2x_{n-1}^{-m}.
\] The first step of Whitehead's algorithm applies the automorphism $\phi$ to $w$. Note that, since $x_{n-1}$ is the only possible cut vertex of $\phi(w)$, Whitehead's algorithm repeatedly applies the automorphism $\phi$ to $w$. If for some $m<i$, the Whitehead profile of $\phi^m(w)$ has no cut vertices, $w$ is inseparable.

Now consider $\phi^i(w)$. Let \[u_m(i):=(x_1^2x_2^2\ldots x_{n-2}^2x_{n-1}^{i-m}x_n)^2x_{n-1}^2x_{n-2}^2\ldots x_2^2.\]  If $\phi^i(w)$ contains $u_i(i)$ and $u_j(i)$ as subwords, $w$ is inseparable, as the Whitehead graph of $u_i(i)$ contains the Whitehead profile in \Cref{fig:Whitehead-profile-rin-i-steps}, the Whitehead graph of $u_j(i)$ contains the Whitehead profile in \Cref{fig:Whitehead-profile-rin}, and the union of these Whitehead profiles is connected with no cut vertices. Note that because $w$ is reduced, $\inv{x_1}$ does not directly precede $r_i$ or $r_j$ in $w$; also, $\inv{x_2}$ does not directly follow $r_i$ or $r_j$ in $w$. By \Cref{lemma:phi-only-cancels-xn-1}, no letters are canceled in $\phi^i(w)$ but $x_{n-1}^{\pm 1}$, so since $n-1>2$, $\phi^i(w)$ contains $u_i(i)$ and $u_j(i)$ as disjoint subwords.

\medskip

\begin{figure}
    \centering
    \includegraphics[scale=.45]{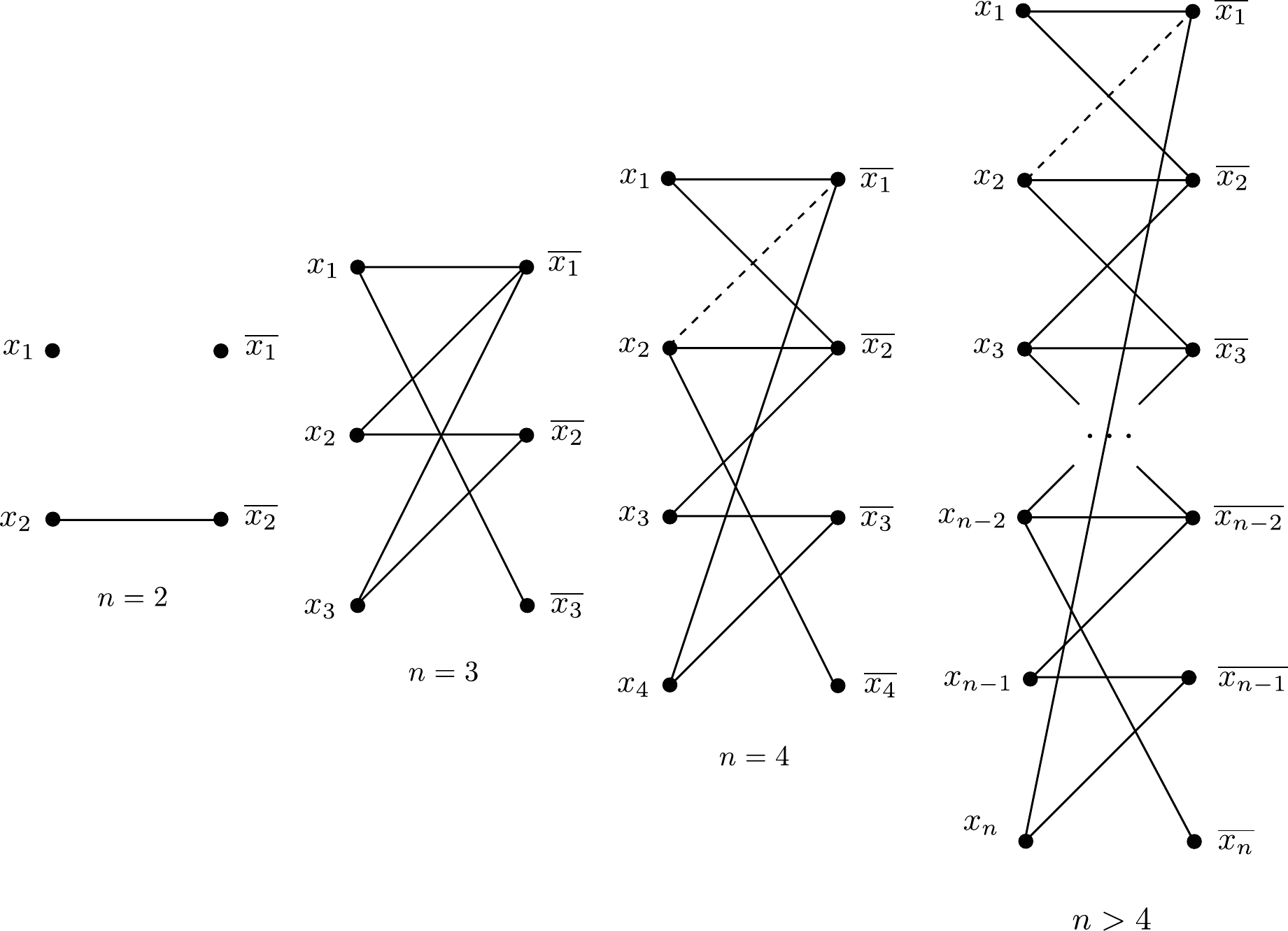}
    \caption{The Whitehead profiles of $u_i(i)$ for $n\geq2.$}
    \label{fig:Whitehead-profile-rin-i-steps}
\end{figure}

\item[(1ii)] Now assume $n=3$.  We have
    \[
        \phi(r_i) =\big(x_2(x_1^2x_2^{i-1}x_3)^2x_2\big)^2.
    \]
    A calculation shows that, for $m\leq i$, 
    \[
        \phi^m(r_i)=\big(x_2^m(x_1^2x_2^{i-m}x_3)^2x_2^{2-m}\big)^2.
    \]
    Similar to the proof of (1i), we want to show that if $r_i$ and $r_j$ are disjoint subwords of $w$, then $\phi^m(w)$ contains disjoint subwords $u_i(m), u_j(m)$ satisfying the property that the union of the Whitehead profiles of $u_i(i)$ and $u_j(i)$ is connected with no cut vertices. Let \[u_m(i)=(x_1^2x_2^{i-m}x_3^2)^2x_2^2(x_1^2x_2^{i-m}x_3)^2.\] The Whitehead profiles of $u_i(i)$ and $u_j(i)$ are shown in \Cref{fig:Whitehead-profile-rin-i-steps} and \Cref{fig:Whitehead-profile-rin}, respectively. Note that their union is connected without cut vertices.  By \Cref{lemma:phi-only-cancels-xn-1}, the only letters that can be cancelled under powers of $\phi$ are $x_{n-1}$ and its inverse, so it suffices to show that the copy of $x_2^2$ is preserved by $\phi$.  Note that \[\phi(x_3x_2^2x_1)=x_3\inv{x_2}x_2^2x_2x_1\inv{x_2},\] and the conclusion follows.

    \medskip

    \item[(2)] For $n=2$, we have:
    \[
        \phi(r_i)=x_1(x_1^{i-1}x_2)^2\inv{x_1}.
    \]
    A calculation shows that, for $m\leq i,$
    \[
        \phi^m(r_i) = x_1^m(x_1^{i-m}x_2)^2x_1^{-m}.
    \]
    Again, we want to show that $\phi^i(w)$ contains $u_i(i)$ and $u_j(i)$, where \[u_m(i) = x_2x_1^{i-m}x_2.\]  The Whitehead profiles of $u_i(i)$ and $u_j(i)$ are visible in \Cref{fig:Whitehead-profile-rin-i-steps} and \Cref{fig:Whitehead-profile-rin}, respectively. Again, their union is connected without cut vertices.  \Cref{lemma:phi-only-cancels-xn-1} implies that the only letters cancelled by $\phi$ are $x_1$ and its inverse. Thus $u_j(i)$ and $u_i(i)$ are both subwords of $\phi^i(w)$, and the conclusion follows. \qedhere
\end{enumerate}
\end{proof}

\subsection{Step 2: Separable length of $\mathbf{w(a, k, s, n)}$} 
Recall that for $n>3,$
\[
    r_i(n):= (x_1^2x_2^2\ldots x_{n-2}^2x_{n-1}^ix_n)^2x_{n-1}^2x_{n-2}^2\ldots x_2^2;
\]
for $n=3$, 
\[  
    r_i(n):= \big((x_1^2x_2^ix_3)^2x_2^2\big)^2;
\]
for $n=2,$ 
\[  
    r_i(n):=(x_1^ix_2)^2.
\]
For all $n\geq 3$, we let
\begin{equation}\label{equation:w-for-large-n}
    w(a, k, s, n):=\displaystyle\prod_{i=a}^{a+s-1}r_i^k;
\end{equation}
for $n=2$, let 
\begin{equation}\label{equation:w-for-n-2}
    w(a, k, s, n):=\displaystyle\prod_{i=0}^{s-1} r_{a+2i}^k.
\end{equation}
\noindent Recall that a \emph{boundary arc} of $D$ is a connected component of the intersection of the boundary of a region of $D$ with the boundary of $D$. Sometimes we will refer to a ``boundary arc of $R$", where $R$ is a region of $D$: we mean a boundary arc of $D$ contained in $\partial R$. For convenience, we will often refer to a subword of the label of a boundary arc as a subword of a boundary arc, conflating a boundary arc with its label. 

\begin{restatable}{prop}{seplengthw}
\label{prop: seplength-w}
    Let $n\geq 2$. For sufficiently large $a=a(s)$ and $k=k(a, n, s)$, the separable length of $w(a, k, s, n)$ is $s$.
\end{restatable}

The proof of this proposition is highly computational. For the sake of brevity, we will prove the proposition here only for $n>3$.  The other two cases ($n=2$ and $n=3$) are very similar and can be found in the Appendix (\Cref{sec: appendix}).

\begin{proof}
    Let $n>3$. We have $\seplength{w(a, k, 1, n)}=1$ as $w(a, k, 1, n)$ is separable for all $a, k$ and $n$. By \Cref{lemma: only one ri}, $w(a, k, 2, n)$ is inseparable for all $a, k$ and $n$, so $\seplength{w(a, k, 2, n)}\geq 2$. Thus, $\seplength{w(a, k, 2, n)}=2$ as $w(a, k, s, n)=r_a^kr_{a+1}^k$, and both $r_a^k$ and $r_{a+1}^k$ are separable. Note that it is always the case that $\seplength{w(a, k, s, n)}\leq s$ as each $w(a, k, s, n)$ is a product of the $s$ separable words $r_a^k, r_{a+1}^k, \ldots r_{a+s-1}^k$. It remains to show that there exist integers $a$ and $k$ such that $\seplength{w(A, K, s, n)}=s$ for all $A\geq a$ and $K\geq k$. 

    Suppose toward a contradiction that $\seplength{w(a, k, s, n)}\leq s-1.$  Then there is a Van Kampen diagram $D$ for $w(a, k, s, n)$ over $\langle X_n\mid \Csep\rangle$ of area at most $s-1$. By \Cref{prop:arcs of disk diagrams}, a Van Kampen diagram of area at most $s-1$ has at most $2(s-2)$ boundary arcs. By \Cref{lemma: only one ri}, each region of $D$ has at most one of the elements of $\{r_i\}_{i=a}^{a+s-1}$ as a subword of its boundary relation.  Since $D$ is a Van Kampen diagram for a positive word, the sum of the lengths of boundary arcs of $D$ is equal to $|w(a, k, s, n)|$ (see \Cref{lemma: positive-words-wedge-disks}). The contradiction we will obtain is that the sum of the lengths of all boundary arcs of $D$ is less than $|w(a, k, s, n)|$ for large enough values of $a$ and $k$. 
    
    We will complete the proof of the proposition assuming the following two claims, the first of which gives an upper bound for the length of any boundary arc of $D$. The second claim gives an upper bound for both the length of subwords of $w(a, k, s, n)$ containing no $r_i$ as a subword and the sum of the lengths of boundary arcs of $D$. We then prove the claims.

    \begin{restatable}{clm}{longestBoundaryArc}\label{clm:longest-bdry-arc}
         Let $n\geq 2$, let $D$ be either a Van Kampen diagram or an annular diagram over $\langle X_n\mid \Csep\rangle$, and suppose $D$ has a boundary component $B$ labeled by $w(a, k, s, n)$. For sufficiently large $k$ and $s\geq 2$, there is a degree-two polynomial $M_n(a, k, s)$ such that the length of any boundary arc of $D$ along $B$ has length less than $M_n(a, k, s).$  
   \end{restatable}

    \begin{restatable}{clm}{upperBoundD}\label{clm:upper-bound-on-boundary-D}
         Let $n\geq 2$. Let $D$ be a Van Kampen or annular diagram over the presentation $\langle X_n\mid \Csep\rangle$ with a boundary component $B$ labeled by $w(a, k, s, n)$. The following hold for sufficiently large $k$ and $s\geq 2$.
         \begin{enumerate}
             \item There is a linear function $m_n(a, s)$ such that every boundary arc along $B$ containing no $r_i$ as a subword has length less than $m_n(a, s)$. 
             \item Suppose $D$ is a Van Kampen diagram for $w(a, k, s, n)$ with at most $s-1$ regions.  Then, for sufficiently large values of $a$ and $k$, the sum of the lengths of the boundary arcs of $D$ is at most $(s-1)M_n(a, k, s)+(s-3)m_n(a, s).$
         \end{enumerate}
    \end{restatable}

\noindent We now begin the proof of the proposition assuming the claims. By \Cref{clm:upper-bound-on-boundary-D}, the sum of the lengths of the boundary arcs of $D$ is less than $(s-1)M_n(a, k, s)+(s-3)m_n(a, s).$ The proof splits into three cases: $n>3$, $n=3$, and $n=2$. The cases $n=3$ and $n=2$ can be found in \Cref{sec: appendix}.
    
    \noindent\textbf{Case 1: $\mathbf{n>3}$.} A calculation shows
    \begin{equation*}
        \begin{split}
            |w(a, k, s, n)|&=\displaystyle\sum_{i=a}^{a+s-1}\big(k|r_i|\big)\\
            &=k(6ns-11s+2as+s^2).
        \end{split}
    \end{equation*}
    By the definitions of $M_n(a, k, s)$ and $m_n(a, s)$ in the proofs of \Cref{clm:longest-bdry-arc} and \Cref{clm:upper-bound-on-boundary-D}, we have
    \begin{equation*}
            (s-1)M_n(a, k, s)+(s-3)m_n(a, s)=(s-1)(k+2)(6n+2a+2s)
            +(s-3)(12n+4a+4s).
    \end{equation*}
    So, $(s-1)M_n(a, k, s)+(s-3)m_n(a, s)<|w(a,k,s, n)|$ if and only if
    \begin{inequality}\label{inequality:contradiction-inequality}
            (s-1)(k+2)(6n+2a+2s)+(s-3)(12n+4a+4s)
            <k(6ns-11s+2as+s^2)
    \end{inequality}
    which happens if and only if 
    \begin{inequality}\label{inequality:contradiction-subtract-kMn}
        \begin{split}
            2(s-1)&(6n+2a+2s)+(s-3)(12n+4a+4s)\\
        &<k\Big[6ns-11s+2as+s^2-(s-1)(6n+2a+2s)\Big] 
        \end{split}
    \end{inequality}    
    The right side of \Cref{inequality:contradiction-subtract-kMn} can be simplified to 
    \begin{equation*}
        k(6n+2a-9s-s^2).
    \end{equation*}  
    For $a$ satisfying $a>s^2+9s$, $6n+2a-9s-s^2$ is positive. To satisfy \Cref{inequality:contradiction-inequality}, take $k$ such that
    \begin{inequality}\label{inequality:k-for-large-n}
        k>\frac{(4s-8)(6n+2a+2s)}{6n+2a-9s-s^2}.
    \end{inequality}
    Thus, choosing $k$ so that \Cref{inequality:k-for-large-n} holds, we arrive at the desired contradiction. This completes the proof of \Cref{prop: seplength-w} for $n>3$ assuming \Cref{clm:longest-bdry-arc} and \Cref{clm:upper-bound-on-boundary-D}. \end{proof}
    
    We now prove the claims for $n>3$. The remaining cases of the claims can be found in \Cref{sec: appendix}.

   \longestBoundaryArc*
   
    \begin{proof}[Proof of \Cref{clm:longest-bdry-arc}]
    \noindent\textbf{Case 1: $\mathbf{n>3}$}. Recall that 
    \[
        r_i(n)=(x_1^2\ldots x_{n-2}^2x_{n-1}^ix_n)^2x_{n-1}^2\ldots x_2^2
    \]
    and 
    \[
        w(a, k, s, n) = \displaystyle\prod_{i=a}^{a+s-1} r_i(n)^k.
    \]
    By \Cref{lemma: only one ri}, the length of each boundary arc of $B$ is bounded above by the length of the longest subword of $w(a, k, s, n)$ containing only one $r_i$ as a subword, which is less than 
    \[\displaystyle\max_{a\leq i\leq a+s-3}\{|r_ir_{i+1}^kr_{i+2}|, |r_{a+s-1}r_{a}^kr_{a+1}|, |r_{a+s-2}r_{a+s-1}^kr_a|\}.\]  Since
    \begin{align*}
        |r_i| &= |(x_1^2\ldots x_{n-2}^2x_{n-1}^ix_n)^2x_{n-1}^2\ldots x_2^2|\\
        &= 6n+2i-10,
    \end{align*} the word $r_ir_{i+1}^kr_{i+2}$ is shorter than $r_jr_{j+1}^kr_{j+2}$ when $i<j$.  Therefore, the longest boundary arc of $B$ has length less than \[\displaystyle\max\{|r_{a+s-2}r_{a+s-1}^kr_a|, |r_{a+s-3}r_{a+s-2}^kr_{a+s-1}|, |r_{a+s-1}r_a^kr_{a+1}|\}.\]  A calculation shows that the lengths of the three possible maxima are as follows:
    \begin{align*}
        |r_{a+s-3}r_{a+s-2}^kr_{a+s-1}| &=|r_{a+s-3}| +k|r_{a+s-2}|+|r_{a+s-1}|\\
        &=  12n+4a+4s-28+k(6n+2a+2s-14);
    \end{align*}
    \begin{align*}
        |r_{a+s-2}r_{a+s-1}^kr_a|&=|r_{a+s-2}| +k|r_{a+s-1}|+|r_a|\\
        &=12n+4a+2s-24+k(6n+2a+2s-12);
    \end{align*}
    \begin{align*}
        |r_{a+s-1}r_a^kr_{a+1}| &=|r_{a+s-1}| +k|r_a|+|r_{a+1}|\\
        &=12n+4a+2s-20+k(6n+2a-10).
    \end{align*}
    We can assume that $s>2$ by the first paragraph of the proof of this proposition. Now take $k$ such that $k>s-2$, and recall $s-2\geq1$. This guarantees that  $|r_{a+s-2}r_{a+s-1}^kr_a|$ is longest. To see this, consider our three possible maxima:
    \begin{enumerate}
        \item $|r_{a+s-3}r_{a+s-2}^kr_{a+s-1}|=12n+4a+4s-28+k(6n+2a+2s-14);$
        \item $|r_{a+s-2}r_{a+s-1}^kr_a|=12n+4a+2s-24+k(6n+2a+2s-12)$;
        \item $|r_{a+s-1}r_a^kr_{a+1}|=12n+4a+2s-20+k(6n+2a-10).$
    \end{enumerate}
    Subtracting the third expression from each of the possible maxima, we obtain:
    \begin{enumerate}[label=(\alph*)]
        \item $2s-8+k(2s-4);$
        \item $-4+k(2s-2);$
        \item $0$.
    \end{enumerate}
    Since $s>2$ and $k>s-2\geq 1$, both $(a)$ and $(b)$ above are positive. We also have
    \begin{align*}
        2s-8+k(2s-4)&<-4+k(2s-2) \\
        \iff 2s-8+2ks-4k&<-4+2ks-2k\\
        \iff 2s-4&<2k\\
        \iff s-2&<k.
    \end{align*}
    Thus, for $s>2$, $k>s-2$, and $n>3$, the longest boundary arc of $B$ has length less than \[12n+4a+2s-24+k(6n+2a+2s-12).\] Note \[
    12n+4a+2s-24+k(6n+2a+2s-12)<(k+2)(6n+2a+2s)=:M_n(a, k, s).\]
\end{proof}

    \upperBoundD*

    \begin{proof}[Proof of \Cref{clm:upper-bound-on-boundary-D}]
    Recall that, for $n>3,$
    \begin{equation*}
        r_i(n)=(x_1^2x_2^2\ldots x_{n-2}^2x_{n-1}^ix_n)^2x_{n-1}^2x_{n-2}^2\ldots x_2^2.
    \end{equation*}
    and 
    \begin{equation*}
        w(a, k, s, n)=\displaystyle\prod_{i=a}^{a+s-1} \big((x_1^2\ldots x_{n-2}^2 x_{n-1}^ix_n)^2x_{n-1}^2\ldots x_2^2\big)^k.
    \end{equation*}
   
    Observe that, for all $n>3$, the only instance of $r_i$ as a cyclic subword of $w(a, k, s, n)$ is in the term $r_i^k$. Thus if a region $R$ of $D$ has a boundary arc containing $r_i$ as a subword, $R$ intersects this boundary within the section labeled $r_i^k$. By \Cref{lemma: only one ri}, each region of $D$ contains at most one $r_i$ as a subword of its boundary relation. \Cref{clm:longest-bdry-arc} implies that every boundary arc of $D$ containing any $r_i$ as a subword has length less than $M_n(a, k, s)$. Hence if $R$ is a region of $D$ and $i$ is an integer, the total length of the boundary arcs of $R$ which contain any subword $r_i$ is less than $M_n(a, k, s)$. By assumption, $D$ has at most $s-1$ regions, so the total length of the boundary arcs of $D$ containing an $r_i$ as a subword is at most $(s-1)M_n(a, k, s).$  The remaining boundary arcs of $D$ do not contain a subword $r_i$ for any $i$. For $n>3,$ by inspection, the longest subwords of $w(a, k, s, n)$ containing no $r_i$ are of length less than
    \begin{equation*}
        |r_{a+s-1}(n)^2|=12n+4a+4s-24<12n+4a+4s=:m_n(a, s).
    \end{equation*} 
    Since $D$ has at most $2(s-2)$ boundary arcs, assuming each of the $s-1$ regions of $D$ contains at least one boundary arc with an $r_i$ subword, there are at most $(s-3)$ boundary arcs of $D$ with no $r_i$ subword. Thus, the sum of the lengths of boundary arcs of $D$ is less than $(s-1)M_n(a, k, s)+(s-3)m_n(a, s)$. If some region of $D$ contains no boundary arcs with a subword $r_i$, each of the $2(s-2)$ boundary arcs is bounded above by $m_n(a, s)<M_n(a, k, s)$, so we get a larger upper bound for the length of the boundary of $D$ by assuming each region has a boundary arc with an $r_i$ subword. This completes the proof of \Cref{clm:upper-bound-on-boundary-D} for $n>3$. See \Cref{sec: appendix} for the remaining cases.
    \end{proof}  
\noindent With the proofs of both \Cref{clm:longest-bdry-arc} and \Cref{clm:upper-bound-on-boundary-D} complete, we have completed the proof of \Cref{prop: seplength-w}. 

\subsection{Step 3: Computing $\mathbf{\dsep{[w(a, k, s, n)]}{[w(b, k, s, n)]}}$.} For $n>2$, let $b=a+s$, and for $n=2$, let $b=a+2s.$ We have shown so far that if $a$ and $k$ are large enough, the words $w(a, k, s, n)$ and $w(b, k, s, n)$ have separable length $s$. Now we want to show that we can choose $a$ and $k$ large enough so that $\dsep{[w(a, k, s, n)]}{[w(b, k, s, n)]}=2s$. The following lemma gives a way to visualize short paths between nontrivial vertices in $\Wh(\Gamma),$ similar to \Cref{lemma:paths=diagrams}.

Our proof that there are $a$ and $k$ large enough so that 
\begin{equation*}
    \dsep{[w(A, K, s, n)]}{[w(b, K, s, n)]}=2s
\end{equation*} for all $A>a$ and $K>k$ is similar to the proof that there are $a$ and $k$ large enough such that $\seplength{w(A, K, s, n)}=s$ for $A>a$ and $K>k$. A path $p\in\pathwh(\rose_n)$ between $[w(a, k, s, n)]$ and $[w(b, k, s, n)]$ of length less than $2s$ would be represented by an annular diagram of area less than $2s$. We first find an upper bound on the number of boundary arcs of annular diagrams of area less than $2s$.  Then we use \Cref{lemma: only one ri} to show that if $a$ and $k$ are sufficiently large and $A$ is an annular diagram with boundaries $w(a, k, s, n)$ and $w(b, k, s, n)$, and the area of $A$ is less than $2s$, then the length of the boundary of $A$ is strictly less than $|w(a, k, s, n)|+|w(b, k, s, n)|-O,$ where $O$ is the total length of the overlapping components of the boundaries (see \Cref{def:overlapping}). This is a contradiction as $w(a, k, s, n)$ and $w(b, k, s, n)$ are both positive, so every edge on the boundary of $A$ is either contained within the boundary of a region or is shared by both boundaries.

\begin{definition}\label{def:overlapping}
    Let $A$ be an annular diagram with no valence-$1$ vertices and boundaries labeled by $w_1$ and $w_2$. The \emph{overlapping components} of the boundary of $A$ are connected components of the intersection of the two boundaries of $A$. Such components are necessarily labeled by cyclic subwords of both $w_1$ and $w_2$. See \Cref{fig:overlapping-components}.

    \begin{figure}
        \centering
        \includegraphics[scale=.5]{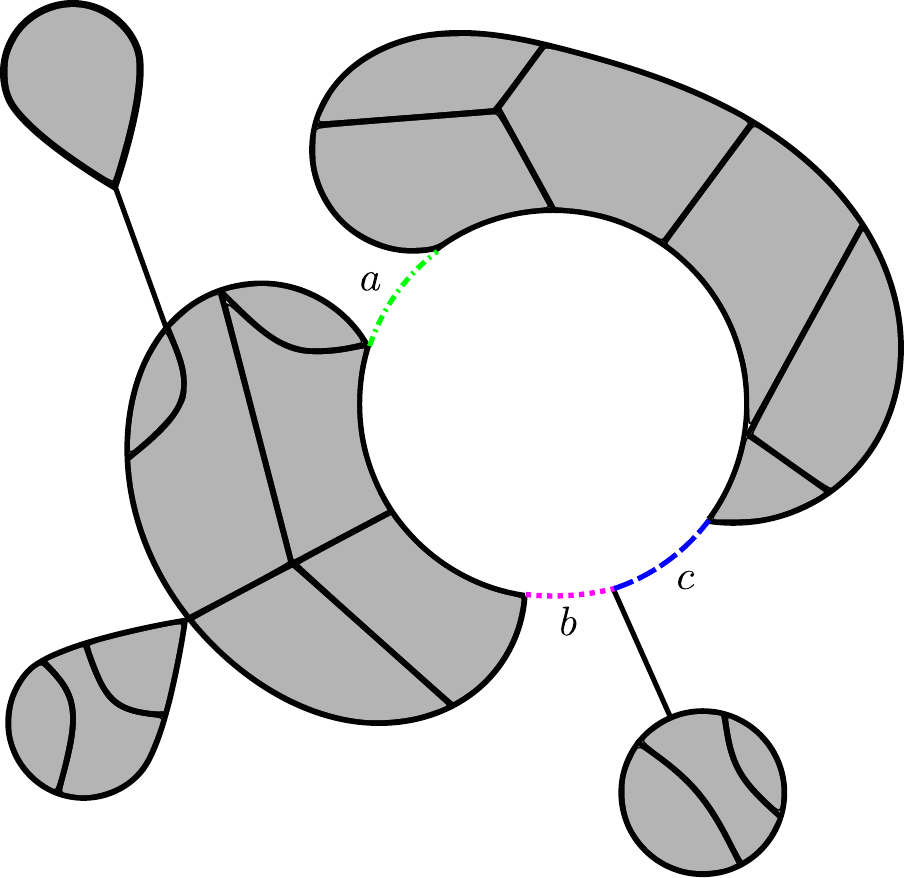}
        \caption{The colored dashed lines are the overlapping components of the annular diagram pictured. Note that $b$ and $c$ are distinct overlapping components: $bc$ is a cyclic subword of the word along the inner boundary of this diagram, but it is not a cyclic subword of the word along the outer boundary.}
        \label{fig:overlapping-components}
    \end{figure}
\end{definition}

\begin{restatable}{prop}{dsepab}
\label{prop: dsepab}
    For $n>2$, let $b=a+s;$ for $n=2$, let $b=a+2s.$  Then, for large enough integers $a$ and $k$, $\dsep{[w(a, k, s, n)]}{[w(b, k, s, n)]}=2s.$
\end{restatable}

The proof of this proposition is computational.  For brevity, we prove the proposition here for $n>3$, and we prove the remaining cases in the Appendix (\Cref{sec: appendix}).

\begin{proof}
    By \Cref{prop: seplength-w}, for sufficiently large $a$ and $k,$ both $w(a, k, s, n)$ and $w(b, k, s, n)$ have separable length $s,$ so there is a path of length $2s$ from $w(a, k, s, n)$ to $w(b, k, s, n)$ through the trivial vertex. Suppose $a$ and $k$ are large enough to satisfy the conclusion of \Cref{prop: seplength-w} and that $\dsep{[w(a, k, s, n)]}{[w(b, k, s, n)]}<2s$. Let $p$ be a path of minimal length from $[w(a, k, s, n)]$ to $[w(b, k, s, n)]$. By \Cref{lemma:existence-annular}, $p$ is represented by an annular diagram $A$ over $\langle X_n\mid \Csep\rangle$ with boundaries $C$ labeled by $w(a, k, s, n)$ and $B$ labeled by $w(b, k, s, n)$. The area of $A$ is no more than $2s-1$. By \Cref{lemma: boundary arcs of annular diagram}, such a diagram has at most $2(2s-1)$ boundary arcs. The number of overlapping components of $A$ is bounded above by the length of $p$: $p$ is a path of minimal length, so the area of $A$ is the length of $p$. Thus there are at most $2s-1$ regions of $A$ and at most $2s-1$ overlapping components connecting them. The overlapping components of $A$ are labeled by subwords shared by $w(a, k, s, n)$ and $w(b, k, s, n).$ Let $O$ be the sum of the lengths of the overlapping components of $A$. Since $w(a, k, s, n)$ and $w(b, k, s, n)$ are positive, \Cref{lemma:existence-annular} implies that the sum of the lengths of the edges on the boundary of $A$ is $|w(a, k, s, n)|+|w(b, k, s, n)|-O.$ We show that $a$ or $k$ must be small by finding an upper and lower bound for the sum of the lengths of all boundary arcs and overlapping components of $A$ and observing that the lower bound is larger than the upper bound for sufficiently large $a$ and $k$. 

    \medskip
    
    \noindent\textbf{Case 1: $\mathbf{n>3}$.} By \Cref{lemma: only one ri}, no region of $A$ can have boundary label containing both $r_i$ and $r_j$ as subwords if $i\neq j$.  By \Cref{clm:longest-bdry-arc}, the longest subword of $w(b, k, s, n)$ containing only one of the $r_i$ as a subword is of length less than $M_n(b, k, s)=(k+2)(6n+2b+2s)$. Subwords of $w(a, k, s, n)$ containing only one of the $r_i$ as a subword are strictly shorter. By \Cref{clm:upper-bound-on-boundary-D}, the longest subword of $w(b, k, s, n)$ containing no $r_i$ subword is of length strictly less than $m_n(b, s)=12n+4b+4s$. Subwords of $w(a, k, s, n)$ containing no $r_i$ subwords are strictly shorter. By the argument in \Cref{clm:upper-bound-on-boundary-D}, supposing $A$ contains two or more boundary arcs with the subword $r_j$ for some particular $j$, the sum of the lengths of these boundary arcs is less than $M_n(b, k, s)$. Since each region can have only one of the $r_i$ as a subword of its boundary label, the sum of the lengths of boundary arcs of $A$ containing any $r_i$ as a subword is less than $(2s-1)M_n(b, k, s).$ Assuming each region of $A$ contains a boundary arc with an $r_i$ subword, there are at most $2s-1$ remaining boundary arcs of $A$ with no $r_i$ as a subword, and these arcs have length less than $m_n(b, s)$ by the first part of \Cref{clm:upper-bound-on-boundary-D}. If some region has no boundary arc containing an $r_i$ subword, all of its boundary arcs have length less than $m_n(b, s)<M_n(b, k, s)$. Thus we get a higher upper bound on the total length of boundary arcs of $A$ when we assume each region contains a boundary arc with a subword $r_i$ for some $i$. Thus the sum of the lengths of the boundary arcs of $A$ is at most $(2s-1)M_n(b, k, s)+(2s-1)m_n(b, s).$ Let $O_n(a, s)$ be the length of the longest subword shared by $w(a, k, s, n)$ and $w(b, k, s, n)$.  Since there are at most $2s-1$ overlapping components of $A$, the length of the boundary of $A$ is less than $(2s-1)M_n(b, k, s)+(2s-1)m_n(b, s) + (2s-1)O_n(a, s).$
    
    We now compute a lower bound on the length of the boundary of $A$. Consider the total length of the boundary of an annular diagram of area less than $2s$ with boundaries labeled by $w(a, k, s, n)$ and $w(b, k, s, n)$. Since $w(a, k, s, n)$ and $w(b, k, s, n)$ are both positive, every edge of the boundary of $A$ is either contained in the boundary of a region of $A$ or is a subword of both $w(a, k, s, n)$ and $w(b, k, s, n).$ Thus the length of the boundary of $A$ is at least $|w(a, k, s, n)|+|w(b, k, s, n| - (2s-1)O_n(a, s)$.
    
    We want to compute an upper bound for $O_n(a, s)$. Consider the longest subword shared by 
    \[w(a, k, s, n)=\displaystyle\prod_{i=a}^{a+s-1}\big((x_1^2x_2^2\ldots x_{n-2}^2x_{n-1}^ix_n)^2x_{n-1}^2x_{n-2}^2\ldots x_2^2\big)^k\]
    and 
    \[w(b, k, s, n)=\displaystyle\prod_{i=a+s}^{a+2s-1}\big((x_1^2x_2^2\ldots x_{n-2}^2x_{n-1}^ix_n)^2x_{n-1}^2x_{n-2}^2\ldots x_2^2\big)^k\]
    By inspection, \[O_n(a, s)=|x_{n-1}^{a+s-1}x_nx_{n-1}^2x_{n-2}^2\ldots x_2^2x_1^2x_2^2\ldots x_{n-2}^2x_{n-1}^{a+s-1}|=2a+2s+4n-9.\]
      Thus, to show that there are $a$ and $k$ such that 
      \begin{equation*}
          \dsep{[w(a, k, s, n)]}{[w(b, k, s, n)]}=2s,
      \end{equation*} it suffices to show that there are $a$ and $k$ such that
    \begin{inequality}\label{inequality:contradiction-annular-large-n}
        |w(a, k, s, n)|+|w(b, k, s, n)|-2(2s-1)O_n(a, s)>(2s-1)M_n(b, k, s)+(2s-1)m_n(b, s).
    \end{inequality}

    Recall that
    \begin{equation*}
        |w(a, k, s, n)|=k(6ns-11s+2as+s^2).
    \end{equation*} A computation shows that
    \begin{equation*}
        |w(a, k, s, n)|+|w(b, k, s, n)| = k(12ns-22s+4as+4s^2).
    \end{equation*} Therefore, \Cref{inequality:contradiction-annular-large-n} is equivalent to 
    \begin{inequality}\label{inequality:constants-contradiction-annular-large-n}
        \begin{split}
            k(12ns&-22s+4as+4s^2)-2(2s-1)(2a+2s+4n-9)\\&>(2s-1)(k+2)(6n+2a+4s)+(2s-1)(12n+4a+8s),
        \end{split} 
    \end{inequality} which simplifies to 
    \begin{equation*}
        k(6n+2a-18s-4s^2)>(2s-1)(12a+20s+32n-18).
    \end{equation*}
    The left side of this inequality is positive if $a>9s+2s^2$. Therefore, letting $a>9s+2s^2$ and
    \begin{equation*}
        k>\displaystyle\frac{(2s-1)(12a+20s+32n-18)}{6n+2a-4s^2-18s}
    \end{equation*}
    gives us our contradiction. Thus, for $n>3$ and sufficiently large $a$ and $k$, 
    \[
        \dsep{[w(a, k, s, n)]}{[w(b, k, s, n)]}=2s.
    \] \end{proof} 

\subsection{Step 4: Fat triangles.} We have proven that for every $s>0$ we have geodesic triangles on the vertices $[w(a, k, s, n)], [w(b, k, s, n)],$ and $[1]$ where \[\dsep{[1]}{[w(a, k, s, n)]}=\dsep{[1]}{[w(b, k, s, n)]}=s\] and $\dsep{[w(a, k, s, n)]}{[w(b, k, s, n)]}=2s.$  We want to show that there is a geodesic between $[w(a, k, s, n)]$ and $[w(b, k, s, n)]$ which lies above an open ball of radius $s$ centered at $[1]$. We conclude that these triangles violate the $\delta$-thin condition for hyperbolicity, so $\Wh(\rose_n)$ is nonhyperbolic for all $n\geq 2$.

\nonhyperbolicity*

\begin{proof}
Let $n\geq 2$ and $s>0$.  For $n>2,$ let $b=a+s$, and for $n=2$, let $b=a+2s$. For large enough $a$ and $k$, there is a geodesic $p$ from $w(a, k, s, n)$ to $w(b, k, s, n)$ with vertices $w(c_i, k, t_i, n)$ where all the $c_i\geq a$ and $t_i\geq s$ (see \Cref{fig:fat-triangle}).  By \Cref{prop: seplength-w}, all vertices on the interior of $p$ have separable length strictly greater than $s$. This triangle is pictured in \Cref{fig:fat-triangle} for $n>2$. For $n=2$, see \Cref{fig:fat-triangle-n-2}.

\begin{figure}
    \centering
    \includegraphics[scale=.4]{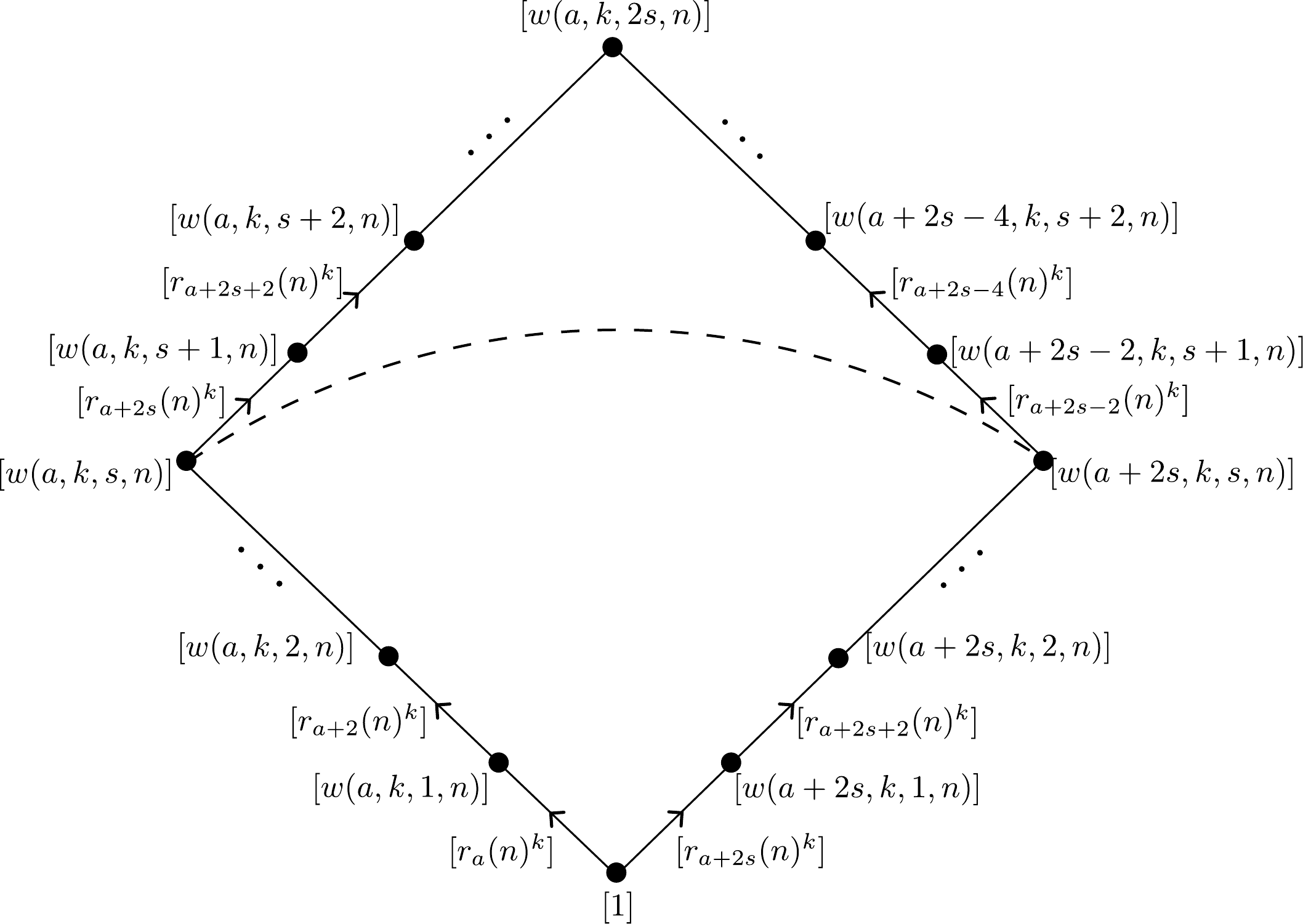}
    \caption{Fat geodesic triangles in $\Wh(\rose_2).$}
    \label{fig:fat-triangle-n-2}
\end{figure}

To show that $\Wh(\rose_n)$ is nonhyperbolic, it suffices to show that there is no constant $\delta>0$ such that the infinite collection of geodesic triangles on vertices \[\{[1], [w(a, k, s, n)], [w(b, k, s, n)]\}_{s=1}^{\infty}\] described above satisfies the $\delta-$thin definition of hyperbolicity.  But for any $\delta<\infty$, there is some $s>\delta$ such that $\dsep{[v]}{[w(a, k, 2s, n)}\geq s$ for any vertex $[v]$ on the geodesic segments $([1], [w(a, k, s, n)])$ and $([1],[w(b, k, s, n)])$ in \Cref{fig:fat-triangle} and \Cref{fig:fat-triangle-n-2}. The conclusion follows.
\end{proof}

The geodesic triangles in \Cref{theorem:nonhyperbolicity} pull back to geodesic triangles in $\Cay(\free_n, \Csep)$, so we obtain the following result.

\begin{figure}
        \centering
        \includegraphics[scale=.4]{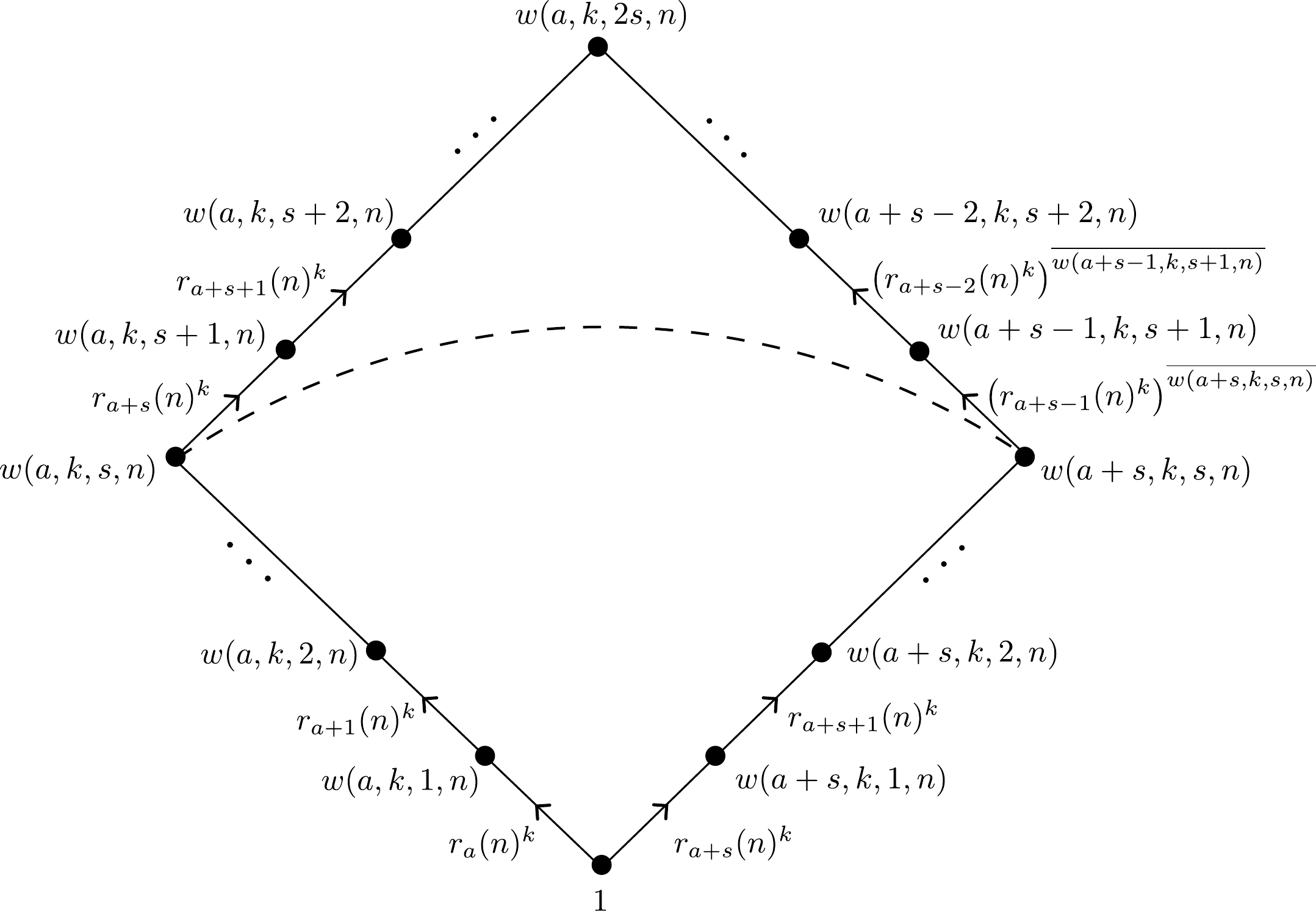}
        \caption{Geodesic triangle in $\Cay(\free_n, \Csep)$ for $n>2$. Here, for $t, v\in\free_n,$ $t^v:=vt\inv{v}.$}
        \label{fig:geodesic-triangle-Cay-Csep}
\end{figure} 

\caynonhyperbolicity*

\begin{proof}
    Recall from the proof of \Cref{cor:Cay-inf-diameter} that $\Wh(\rose_n)$ is a quotient of $\Cay(\free_n, \Csep)$ obtained by first identifying vertices labeled by conjugate elements of $\free_n,$ then identifying edges between vertices which have conjugate labels.  Let $\mathcal{T}$ be the set of the geodesic triangles in $\Wh(\rose_n)$ on the vertices \[\{[1], w(a, k, s, n), w(b, k, s, n)\}\] as described in the proof of \Cref{theorem:nonhyperbolicity}. Let $T\in\mathcal{T}$ and note that the preimage of $T$ in $\Cay(\free_n, \Csep)$ contains a triangle $\widetilde{T}$ (see \Cref{fig:geodesic-triangle-Cay-Csep}). The quotient map from $\Cay(\free_n, \Csep)$ to $\Wh(\rose_n)$ is $1$-Lipschitz, so $\widetilde{T}$ is a geodesic triangle in $\Cay(\free_n, \Csep)$. It follows that the path from $w(a, k, s, n)$ to $w(b, k, s, n)$ crossing through $w(a, k, s, n)w(b, k, s, n)$ lies outside the ball of radius $s$ centered at $1\in\Cay(\free_n, \Csep)$. Thus, there is an infinite family of geodesic triangles in $\Cay(\free_n, \Csep)$ violating the $\delta$-thin condition for nonhyperbolicity.
    
    To prove that $\Cay(\free_n, \curves\prim)$ is nonhyperbolic, it suffices to show that $\Cay(\free_n, \curves\prim)$ is quasi-isometric to $\Cay(\free_n, \Csep)$. To see this, let $w\in\Csep$. There is a basis $\mathcal{B}$ for $\free_n$ such that $w$ can be expressed as a word in at most $(n-1)$ elements of $\mathcal{B}$. Let $\mathcal{B}^*:=\mathcal{B}\cup\inv{\mathcal{B}}$, where $\inv{\mathcal{B}}$ is the set of inverses of elements of $\mathcal{B}$. Let $A$ be the set of elements of $\mathcal{B}^*$ present in $w$. The set $\mathcal{B}^*\setminus A$ contains an element $a$ and its inverse $\inv{a}$.  Write $w$ as a product of two nontrivial words $w_1, w_2\in\free\langle\mathcal{B}\rangle$.  Then $w_1a$ and $\inv{a}w_2$ are both primitive, since they only contain one copy of $a$ or its inverse (see \Cref{remark:one-letter-primitive}). Therefore, every separable word is a product of two primitive elements of $\free_n$, and it follows that $\Cay(\free_n, \Csep)$ is quasi-isometric to $\Cay(\free_n, \curves\prim),$ as desired. \end{proof}   

\section{Homology version of the separability complex}\label{sec: homology version}

\noindent We define a nested sequence of spaces 
\[\W_1(\Gamma) \supset \W_2(\Gamma)\supset \W_3(\Gamma)\supset\ldots\] with intersection $\Wh(\Gamma)$. If $\Gamma$ is a regular cover of $\rose_n$ and the deck group $G_{\Gamma}$ is nilpotent, this sequence will terminate after finitely many steps. 
\begin{definition}
Let $\Gamma$ be a finite cover of $\rose_n$.  We define a sequence of spaces of curves whose intersection is $\Wh(\Gamma)$.  For all $i\geq 1$, we let $\W_i(\Gamma)$ be the directed graph defined below.
\begin{itemize}
    \item The set of vertices of $\W_i(\Gamma)$ is $\Wh(\Gamma)^{(0)};$
    \item  Edges are labeled by free conjugacy classes of elements of the set \newline $E_i:=(\Csep\cap\pi_1(\Gamma))\cup \big(\pi_1(\Gamma)\big)_{i+1}$, where $ \big(\pi_1(\Gamma)\big)_j \text{ is the } j$th term in the lower central series of $\pi_1(\Gamma)$;
    \item There is an edge labeled $[\alpha]\in E_i$ from $[v]$ to $[w]$ if and only if there are some $g\in[v], {h\in[w]}, {c\in\free_n}$ such that $gc\alpha\inv{c} = h.$
\end{itemize}
\end{definition}
Note that if any of the $\W_i(\Gamma)$ are connected, so is $\W_1(\Gamma)$.  $\W_1(\Gamma)$ is the ``integral homology version" of the separability complex. It is connected if and only if $\Hom_1\sep(\Gamma; \Z) = \Hom_1(\Gamma; \Z).$

\connectedhomology*

\begin{proof}
Suppose $\Hom^{\text{sep}}_1(\Gamma; \Z) = \Hom_1(\Gamma; \Z)$, and take an element $v$ of $\pi_1(\Gamma)$. Since $\Hom_1(\Gamma; \Z)$ is the abelianization of $\pi_1(\Gamma)$, $v$ is expressible as a product $\prod_{i\leq m} \alpha_i$ such that \newline ${\alpha_i\in (\Csep\cap\pi_1(\Gamma))\cup [\pi_1(\Gamma), \pi_1(\Gamma)]}.$  Thus there is a directed path from each vertex $[v]\in \W_1(\Gamma)$ to the trivial vertex corresponding to multiplying $v$ by the inverses of the $\alpha_i$ from $i = m$ to $i = 1$.  

Now suppose $\W_1(\Gamma)$ is connected.  Then there is an undirected path from every vertex $v\in \Gamma$ to the trivial loop.  Observe that for every edge labeled $[\beta]$ in $\W_1(\Gamma)$ connecting $[v]$ to $[w]$, there is an edge labeled $[\inv{\beta}]$ from $[w]$ to $[v]$.  Therefore, there is a directed path in $\W_1(\Gamma)$ from the trivial vertex to every vertex in the cover, and a directed path from the trivial loop to a vertex $[w]$ is an expression for $w$ as a product of elements of $(\Csep\cap\pi_1(\Gamma))\cup [\pi_1(\Gamma), \pi_1(\Gamma)]$.  Connectedness of $\W_1(\Gamma)$ implies there exists such a path for every vertex in $\W_1(\Gamma)$, so $\mathrm{H}_1^{\text{sep}}(\Gamma; \Z) = \mathrm{H}_1(\Gamma; \Z).$
\end{proof}

While we do not know whether $\W_1(\Gamma)$ is always connected, we do know that $\W_1(\Gamma)$ always has finitely many components. This follows from work of Boggi, Putman, and Salter in \cite{boggi2023generating}.

\begin{thm}[Boggi--Putman--Salter]
    Let $\pi:\widetilde{\Sigma}\rightarrow \Sigma$ be a finite branched covering between closed oriented surfaces. Then $\Hom_1^{\mathrm{pant}}(\widetilde{\Sigma}; \Q)=\Hom_1(\widetilde{\Sigma}; \Q)$.
\end{thm}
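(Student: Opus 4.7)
The plan is to reduce the statement to the rational homological assertion $\Hom_1\sep(\Gamma;\Q) = \Hom_1(\Gamma;\Q)$ and then derive it from Boggi--Putman--Salter via the homotopy equivalence $\rose_n \simeq \Sigma_{0,n+1}$.

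First I unpack when two vertices $[v],[w]$ of $\W_1(\Gamma)$ lie in the same component. Under abelianization $\pi_1(\Gamma) \to \Hom_1(\Gamma;\Z)$, the vertex set of $\W_1(\Gamma)$ surjects onto $\Hom_1(\Gamma;\Z)/G$, where $G = \free_n/\pi_1(\Gamma)$ is the deck group acting by conjugation (since $\pi_1(\Gamma)$-conjugation is trivial on homology). An edge relation $gc\alpha\overline{c}=h$ with $\alpha \in \pi_1(\Gamma\sep) \cup [\pi_1(\Gamma),\pi_1(\Gamma)]$ becomes, in homology, $[h]-[g] \in \Hom_1\sep(\Gamma;\Z)$ (commutators vanish, and $\Hom_1\sep(\Gamma;\Z)$ is $G$-invariant because $\free_n$-conjugation permutes $\Csep$). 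Conversely, any equality $[w]_{\mathrm{ab}}-[v]_{\mathrm{ab}} \in \Hom_1\sep(\Gamma;\Z)$ lifts to a factorization $w = v\cdot\prod\sigma_i\cdot c$ with $\sigma_i\in\pi_1(\Gamma\sep)$ and $c$ a commutator, which is precisely a path in $\W_1(\Gamma)$. Hence the components of $\W_1(\Gamma)$ are in bijection with $G$-orbits on $\Hom_1(\Gamma;\Z)/\Hom_1\sep(\Gamma;\Z)$, and it suffices to prove the latter abelian group is finite, i.e.\ $\Hom_1\sep(\Gamma;\Q) = \Hom_1(\Gamma;\Q)$.

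Next I realize $\rose_n \simeq \Sigma_{0,n+1}$, view $\Gamma\to\rose_n$ as an unbranched cover $\widetilde{\Sigma}_{0,n+1}\to\Sigma_{0,n+1}$, and fill in the punctures to extend this to a finite branched cover $\overline{\Sigma}\to S^2$ of closed oriented surfaces. The stated Boggi--Putman--Salter theorem gives $\Hom_1\pant(\overline{\Sigma};\Q) = \Hom_1(\overline{\Sigma};\Q)$. Capping off the punctures yields a short exact sequence
\begin{equation*}
0 \to K \to \Hom_1(\Gamma;\Q) \to \Hom_1(\overline{\Sigma};\Q) \to 0,
\end{equation*}
where $K$ is the $\Q$-span of the peripheral classes of $\Gamma$. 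It remains to show $\Hom_1\sep(\Gamma;\Q)$ contains $K$ and surjects onto $\Hom_1(\overline{\Sigma};\Q)$. Each peripheral class is represented by an $\free_n$-conjugate of $\gamma_j^d$ with $\gamma_j\in X_n$, which lies in the proper free factor $c\langle\gamma_j\rangle\overline{c}$ of $\free_n$ and hence in $\Csep$. For a pants curve $\alpha$ on $\overline{\Sigma}$, viewed in $\Gamma=\overline{\Sigma}\setminus B$, either $\alpha$ is nullhomotopic or peripheral in $\Gamma$ (handled above), or $\alpha$ is essential and non-peripheral. In the last case an Euler-characteristic check shows that no component of $\Gamma\setminus\alpha$ is a disk or once-punctured disk; so $\alpha$ is nonfilling in the punctured surface $\Gamma$, and hence separable by the containment noted in the introduction that nonfilling curves on a punctured surface lie in $\Csep$. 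Putting these pieces together, $\Hom_1\sep(\Gamma;\Q)$ contains $K$ and maps onto $\Hom_1\pant(\overline{\Sigma};\Q) = \Hom_1(\overline{\Sigma};\Q)$, so a short diagram chase gives $\Hom_1\sep(\Gamma;\Q) = \Hom_1(\Gamma;\Q)$, completing the proof.

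The main obstacle is the case analysis for essential non-peripheral pants curves in the third paragraph, particularly when $\alpha$ is non-separating in $\overline{\Sigma}$ (which happens exactly when the complement of the pair of pants $P$ is connected). One must verify across all configurations of how the branch locus $B$ distributes between $P$ and $\overline{\Sigma}\setminus P$ that $\Gamma\setminus\alpha$ retains positive-genus or multi-boundary topology on each side so that $\alpha$ is genuinely nonfilling in $\Gamma$ — this is elementary Euler-characteristic bookkeeping but is the only geometric input beyond Boggi--Putman--Salter and the already-established ``nonfilling $\subset$ separable" containment.
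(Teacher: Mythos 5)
There is a fundamental mismatch between what you proved and what you were asked to prove. The statement in question is the Boggi--Putman--Salter theorem itself: for a finite branched cover $\pi\colon\widetilde{\Sigma}\to\Sigma$ of \emph{closed oriented surfaces}, $\Hom_1\pant(\widetilde{\Sigma};\Q)=\Hom_1(\widetilde{\Sigma};\Q)$. Your proposal never establishes this; it assumes it. In your second paragraph you write that ``the stated Boggi--Putman--Salter theorem gives $\Hom_1\pant(\overline{\Sigma};\Q)=\Hom_1(\overline{\Sigma};\Q)$'' and use that equality as the key input to everything that follows. As a proof of the statement in question, this is circular. What you have actually written is an argument for a \emph{downstream application}: that for a finite regular cover $\Gamma\to\rose_n$ one has $\Hom_1\sep(\Gamma;\Q)=\Hom_1(\Gamma;\Q)$, and hence that $\W_1(\Gamma)$ has finitely many components --- essentially the paper's Theorem~\ref{thm:W1-has-finitely-many-components}, which the paper proves by a different counting argument (bounding the number of components by $K2^m$ using finite-indexness of $\Hom_1\sep(\Gamma;\Z)$ in $\Hom_1(\Gamma;\Z)$).

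For context: the paper does not prove this statement either. It is an external result cited from \cite{boggi2023generating}; the paper's own contribution is the application to graphs, together with a remark (paralleling your puncture-filling paragraph) that the closed-surface result extends to punctured surfaces of finite type. A genuine proof of the assigned statement would require reproducing the actual machinery of Boggi--Putman--Salter --- their homological analysis of branched covers and the argument that classes of curves lying in pairs of pants rationally generate $\Hom_1$ of the cover --- none of which appears in your proposal. Separately, even judged as a proof of the graph-level application, your argument defers its only nontrivial geometric step (the Euler-characteristic case analysis showing essential non-peripheral pants curves remain nonfilling in the punctured cover), whereas the paper's route avoids this entirely by quoting the punctured-surface version of the theorem and exploiting that $\curves\pant$ contains all simple closed curves.
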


Here $\Hom_1^{\mathrm{pant}}(\widetilde{\Sigma}; \Q)$ is the subspace of $\Hom_1(\widetilde{\Sigma}; \Q)$ spanned by \[\mathcal{C}^{\mathrm{pant}}:=\{[w]\in\Hom_1(\widetilde{\Sigma};\Q)\mid \pi(w) \text { is contained in a pair of pants in } \Sigma\}.\]  As the authors state in Remark 1.5 of \cite{boggi2023generating}, this result also applies to punctured $\Sigma$ of finite type, as if $\widetilde{\Sigma}\rightarrow\Sigma$ is a finite branched cover of punctured surfaces, forgetting the punctures of $\Sigma$ and $\widetilde{\Sigma}$ yields a branched cover closed surfaces.  Forgetting the punctures kills exactly the homology classes surrounding punctures, and curves surrounding punctures are simple.  Since $\curves\pant$ contains all simple closed curves, it follows that $\Hom_1^{\mathrm{pant}}(\widetilde{\Sigma}; \Q)=\Hom_1(\widetilde{\Sigma}; \Q)$. We now show that $\W_1(\Gamma)$ has finitely many components.

\begin{prop}
    Let $\Gamma\rightarrow\rose_n$ be a finite regular cover. Then $\W_1(\Gamma)$ has finitely many components.
\end{prop}

\begin{proof}
    Since $\curves\pant\subset\Csep$, $\Hom_1\sep(\Gamma; \Z)$ is finite-index inside $\Hom_1(\Gamma; \Z)$. Let $T$ be a traversal of the left cosets of $\Hom_1\sep(\Gamma; \Z)$ in $\Hom_1(\Gamma; \Z)$. Then every element of $\Hom_1(\Gamma; \Z)$ can be written as a product $ts$, where $t\in T$ and $s\in\Hom_1\sep(\Gamma; \Z)$, so there is a path in the component of $\W_1(\Gamma)$ containing $[t]$ to $[ts].$ Thus there are at most $[\Hom_1(\Gamma; \Z): \Hom_1\sep(\Gamma; \Z)]$ components of $\W_1(\Gamma)$.
\end{proof}

\section{Appendix}\label{sec: appendix}
Here, we prove \Cref{prop: seplength-w} and \Cref{prop: dsepab} in the cases where $n=3$ and $n=2$. The structure of these proofs is not different from the case where $n>3$, but the values of $M_n(a, k, s)$, $m_n(a, s)$, and $O_n(a, s)$ are different. As before, we will prove \Cref{prop: seplength-w} assuming \Cref{clm:longest-bdry-arc} and \Cref{clm:upper-bound-on-boundary-D}, then we will prove the claims. Recall that we defined
\[  
    r_i(3):= \big((x_1^2x_2^ix_3)^2x_2^2\big)^2,
\]
and 
\[  
    r_i(2):=(x_1^ix_2)^2.
\]
For $n=3$, we let
\begin{equation}\label{equation:w-for-large-n-2}
    w(a, k, s, n):=\displaystyle\prod_{i=a}^{a+s-1}r_i(n)^k,
\end{equation}
and for $n=2$, we let 
\begin{equation}\label{equation:w-for-n-2-2}
    w(a, k, s, n):=\displaystyle\prod_{i=0}^{s-1} r_{a+2i}(n)^k.
\end{equation}

\seplengthw*

\begin{proof} 
For all $s$, there is a path in $\Wh(\rose_3)$ from $[1]$ to $[w(a, k, s, 3)]$ of length $s$ with edge labels $[r_i(3)^k]$, where $a\leq i\leq a+s-1$ (see \Cref{equation:w-for-large-n-2}). Similarly, there is a path from $[1]$ to $[w(a, k, s, 2)]$ of length $s$ with edge labels $[r_i(2)^k]$, where $i$ ranges over the set $\{a, a+2, a+4, \ldots, a+2s-2\}$ (see \Cref{equation:w-for-n-2-2}). Thus, for all $a, k, s,$ and $n=2$ or $n=3$, $\seplength{w(a, k, s, n)}\leq s$.  Since $w(a, k, 2, n)$ is inseparable for all $n$ by \Cref{lemma: only one ri}, we conclude that $\seplength{w(a, k, 2, n)}=2$. It remains to show that, for $s>2$, $\seplength{w(a, k, s, n)}\geq s$ for $n=3$ and $n=2$ and sufficiently large values of $a$ and $k$. Suppose toward a contradiction that, for all $a$ and $k$, $\seplength{w(a, k, s, n)}\leq s-1$, where $n=3$ or $n=2$. Let $D$ be a Van Kampen diagram for $w(a, k, s, n)$ over the presentation $\langle X_n\mid \Csep\rangle$ of area at most $s-1$.

\medskip

\noindent\textbf{Case 2: $\mathbf{n=3}$.} Observe $|r_i(3)|=16+4i$. A calculation shows that 
    \begin{equation*}
        \begin{split}
            |w(a, k, s, 3)| &=\displaystyle\sum_{i=a}^{a+s-1}|r_i(3)^k|\\
            &=k(14s+4as+2s^2).
        \end{split}
    \end{equation*}

    By the second part of \Cref{clm:upper-bound-on-boundary-D}, an upper bound for the sum of the length of boundary arcs of $D$ is given by the expression \[(s-1)M_3(a, k, s)+(s-3)m_3(a, s).\] Since $w(a, k, s, 3)$ is positive, the sum of the lengths of the boundary arcs of $D$ should be equal to $|w(a, k, s, 3)|$, but we will obtain a contradiction by showing that \[|w(a, k, s, 3)|>(s-1)M_3(a, k, s)+(s-3)m_3(a, s).\] By the proofs of \Cref{clm:longest-bdry-arc} and \Cref{clm:upper-bound-on-boundary-D}, we have
    \begin{equation*}
        (s-1)M_3(a, k, s)+(s-3)m_3(a, s) = (s-1)(k+2)(12+4a+4s)+(s-3)(18+6a+6s).
    \end{equation*}   
    Now, $(s-1)M_3(a, k, s)+(s-3)m_3(a, s)<|w(a,k,s,3)|$ if and only if 
    \begin{inequality}\label{inequality:contradiction-inequality3}
        k(14s+4as+2s^2)>(s-1)(k+2)(12+4a+4s)+(s-3)(18+6a+6s),
    \end{inequality}
    which happens if and only if
    \begin{inequality}\label{inequality:contradiction-subtract-kM3}
         k(14s+4as+2s^2)-k(s-1)(12+4a+4s)>2(s-1)(12+4a+4s)+(s-3)(18+6a+6s).
    \end{inequality}
    The left side of \Cref{inequality:contradiction-inequality3} simplifies to $2k(3s-s^2+6+2a),$ which is positive if $a>s^2.$ Thus, letting $a>s^2$ and 
    \begin{inequality}  
        k>\displaystyle\frac{(7s-13)(6+2a+2s)}{2(3s-s^2+6+2a)}
    \end{inequality}
    implies \Cref{inequality:contradiction-inequality3} holds, which in turn implies the area of a Van Kampen diagram for $w(a, k, s, 3)$ over $\langle X_n\mid \Csep\rangle$ is at least $s.$\newline

    \noindent\textbf{Case 3: $\mathbf{n=2}$.} Recall that
    \begin{equation*}
        w(a, k, s, 2)=\displaystyle\prod_{j=0}^{s-1} r_{a+2j}(2)^k
    \end{equation*} and
    \begin{equation*}
        r_i(2)=(x_1^ix_2)^2.
    \end{equation*}
    \noindent The length of $r_i(2)$ is $2i+2$, so, after a calculation, we obtain
    \begin{equation*}
        \begin{split}
            |w(a, k, s, 2)| &=  k\displaystyle\sum_{j=0}^{s-1} \big(2
            (a+2j)+2\big)\\
            &=k(2as+2s^2).
        \end{split}
    \end{equation*} 
    \noindent By the proofs of claims \Cref{clm:longest-bdry-arc} and \Cref{clm:upper-bound-on-boundary-D}, we have $M_2(a, k, s)=(k+2)(2a+4s)$ and $m_2(a, s)=3a+6s.$ Thus, by \Cref{clm:upper-bound-on-boundary-D}, an upper bound for the length of the boundary of $D$ is given by
    \begin{equation*}
        (s-1)M_2(a, k, s)+(s-3)m_2(a, s) = (s-1)(k+2)(2a+4s)+(s-3)(3a+6s).
    \end{equation*}
    \noindent As before, it suffices to show there are $a$ and $k$ large enough so that \[|w(a, k, s, 2)|>(s-1)M_2(a, k, s)+(s-3)m_2(a, s).\] This is true if and only if 
    \begin{inequality}\label{inequality:contradiction-inequality2}
        k(2as+2s^2)>(s-1)(k+2)(2a+4s)+(s-3)(3a+6s),
    \end{inequality}
    which is true if and only if
    \begin{inequality}\label{inequality:contradiction-subtract-kM2}
        k(2as+2s^2)-k(s-1)(2a+4s)>2(s-1)(2a+4s)+(s-3)(3a+6s).
    \end{inequality}
    The left side of \Cref{inequality:contradiction-subtract-kM2} simplifies to $k(4s+2a-2s^2)$, which is positive if $a>s^2.$ Letting $a>s^2$ and 
    \begin{inequality}
        k>\displaystyle\frac{(7s-13)(a+2s)}{4s+2a-2s^2}
    \end{inequality} ensures that \Cref{inequality:contradiction-inequality2} holds.
\end{proof}

We now prove \Cref{clm:longest-bdry-arc} for $n=3$ and $n=2$.

\longestBoundaryArc*

\begin{proof}
    \noindent\textbf{Case 2: $\mathbf{n=3}$.} Recall that 
        \[
            r_i(3)=\big((x_1^2x_2^ix_n)^2x_2^2\big)^2
        \]
        and 
        \[
            w(a, k, s, 3) = \displaystyle\prod_{i=a}^{a+s-1} r_i(3)^k.
        \] As in Case 1, the length of each boundary arc of $B$ is bounded above by the length of the longest subword of $w(a, k, s, 3)$ containing only one $r_i$ as a subword, which is less than
        \begin{align*}
            \displaystyle\max_{a\leq i\leq a+s-3}\{|r_ir_{i+1}^kr_{i+2}|, |r_{a+s-2}r_{a+s-1}^kr_a|, |r_{a+s-1}r_a^kr_{a+1}|\}.
        \end{align*}
        Since
        \begin{align*}
            |r_i(3)| = |\big((x_1x_2^ix_3)^2x_2^2\big)^2|=16+4i,
        \end{align*}
        the word $r_ir_{i+1}^kr_{i+2}$ is shorter than $r_jr_{j+1}^kr_{j+2}$ when $i<j$, so the longest boundary arc of $D$ along $w(a, k, s, 3)$ has length less than
        \[
            \displaystyle\max\{|r_{a+s-2}r_{a+s-1}^kr_a|, |r_{a+s-3}r_{a+s-2}^kr_{a+s-1}|, |r_{a+s-1}r_a^kr_{a+1}|\}.
        \]
        We compute the lengths of the three possible maxima:
        \begin{align*}
            |r_{a+s-3}r_{a+s-2}^kr_{a+s-1}| &=|r_{a+s-3}| +k|r_{a+s-2}|+|r_{a+s-1}|\\
            &=16+8a+8s+k(8+4a+4s);
        \end{align*}
        \begin{align*}
            |r_{a+s-2}r_{a+s-1}^kr_a|&=|r_{a+s-2}| +k|r_{a+s-1}|+|r_a|\\
            &=24+8a+4s+k(12+4a+4s);    
        \end{align*}
        \begin{align*}
            |r_{a+s-1}r_a^kr_{a+1}| &=|r_{a+s-1}| +k|r_a|+|r_{a+1}|\\
            &=32+8a+4s+k(16+4a).
        \end{align*}
        A calculation similar to that in Case 1 shows that, since $s>2$ and $k>s-2,$ the longest of these is $r_{a+s-2}r_{a+s-1}^kr_a.$ Thus, for $s>2$, $k>s-2$, and $n=3$, the longest boundary arc of $B$ has length less than \[24+8a+4s+k(12+4a+4s).\]  Observe 
        \[
            24+8a+4s+k(12+4a+4s)<(k+2)(12+4a+4s)=:M_3(a, k, s).
        \]

        \noindent\textbf{Case 3: $\textbf{n=2}$.} The proof mirrors the proofs of the other two cases. Recall that
        \[
            r_i(2)=(x_1^ix_2)^2
        \] and
        \[
            w(a, k, s, 2)=r_a^kr_{a+2}^kr_{a+4}^k\ldots r_{a+2(s-1)}.
        \]
        We calculate
        \[
        |r_i(2)|=|(x_1^ix_2)^2|=2+2i.
        \]
        The longest subword of $w(a, k, s, 2)$ with only one of the $\{r_{a+2j}\}_{j=0}^{s-1}$ as a subword has length less than 
        \begin{align*}
            \displaystyle\max_{a\leq i\leq a+2(s-3)} \{|r_ir_{i+2}^kr_{i+4}|, |r_{a+2(s-2)}r_{a+2(s-1)}^kr_a|, |r_{a+2(s-1)}r_a^kr_{a+2}|\}
        \end{align*}
        As before, $r_ir_{i+2}^kr_{i+4}$ is shorter than $r_jr_{j+2}r_{j+4}$ when $i<j$. Thus the longest boundary arc of $B$ has length less than
        \[
            \displaystyle\max \{|r_{a+2(s-3)}r_{a+2(s-2)}^kr_{a+2(s-1)}|, |r_{a+2(s-2)}r_{a+2(s-1)}^kr_a|, |r_{a+2(s-1)}r_a^kr_{a+2}|\}
        \] A calculation shows that, for for $s>2, k>s-2$, $|r_{a+2(s-2)}r_{a+2(s-1)}^kr_a|$ is largest. We conclude
        \[  
            4a+4s-4+k(2a+4s-2)
        \] is larger than the length of the longest boundary arc of $B$.  Note that \[
        4a+4s-4+k(2a+4s-2)<(k+2)(2a+4s)=:M_2(a, k, s).
        \]
\end{proof}

We now prove \Cref{clm:upper-bound-on-boundary-D} in the cases $n=3$ and $n=2.$
\upperBoundD*

    \begin{proof}\noindent\textbf{Case 2: $\mathbf{n=3}$.}
    Recall that
    \begin{equation*}
        r_i(3)=\big((x_1^2x_2^ix_3)^2x_2^2\big)^2
    \end{equation*} and 
    \begin{equation*}
        w(a, k, s, 3)= \displaystyle\prod_{i=a}^{a+s-1} r_i(3)^k.
    \end{equation*}
    For the first part, note that the longest subwords of $w(a, k, s, 3)$ with no $r_i$ as a subword are of length less than
    \begin{equation*}
        m_3(a, s):=|\big((x_1^2x_2^{a+s-1}x_3)^2x_2^2\big)^3| = 18+6a+6s.
    \end{equation*}

    For the second part, observe that the only instance of $r_i(3)$ as a cyclic subword of $w(a, k, s, 3)$ is in the term $r_i(3)^k$. Note that $|r_i(3)^k|<M_3(a, k, s)$. By \Cref{lemma: only one ri}, each region of $D$ contains at most one $r_i$ as a subword of its boundary relation. \Cref{clm:longest-bdry-arc} implies that every boundary arc of $D$ containing an $r_i$ as a subword has length less than $M_3(a, k, s)$. Hence if $R$ is a region of $D$ and $i$ is an integer, the total length of the boundary arcs of $R$ which contain a subword $r_i$ is less than $M_3(a, k, s)$. By assumption, $D$ has at most $s-1$ regions, so the total length of the boundary arcs of $D$ containing any $r_i$ as a subword is at most $(s-1)M_3(a, k, s).$  The remaining boundary arcs of $D$ do not contain a subword $r_i$ for any $i$, and each of these arcs has length less than $m_3(a, s)$. Observe that $m_3(a, s)<M_3(a, k, s)$, so we get a higher upper bound for the sum of the lengths of the boundary arcs of $D$ by assuming each region of $D$ has a boundary arc with some $r_i$ subword. Since there are at most $2(s-2)$ boundary arcs of $D$, the sum of the lengths of the boundary arcs of $D$ is at most \[(s-1)M_3(a, k, s)+(s-3)m_3(a, s).\] 
    
    \noindent\textbf{Case 3: $\textbf{n=2}$.} Recall that
        \[
            r_i(2)=(x_1^ix_2)^2
        \] and
        \[
            w(a, k, s, 2)=\prod_{j=0}^{s-1}r_{a+2j}(2)^k.
        \] One can check that the longest subword of $w(a, k, s, 2)$ with none of the $r_i(2)$ as a subword is of length less than \[|(x_1^{a+2(s-1)}x_2)^3|=3a+6s-3<3a+6s=:m_2(a, s).\]
        
        For every $i\neq a$, the only instance of $r_i(2)$ as a cyclic subword of $w(a, k, s, 2)$ is in the term $r_i(2)^k$, and the only instance of $r_a(2)$ as a cyclic subword of $w(a, k, s, 2)$ is within $x_1^{a+2(s-1)}x_2r_a(2)^k$. Observe that $|r_i(2)^k|<M_2(a, k, s)$ and $|x_1^{a+2(s-1)}x_2r_a(2)^k|<M_2(a, k, s)$. Thus if a region $R$ of $D$ has a boundary arc containing $r_i$ as a subword for $i\neq a$, $R$ intersects this boundary within the section labeled $r_i^k$ (within $x_1^{a+2(s-1)}x_2r_a^k$ for $i=a$). By \Cref{lemma: only one ri}, each region of $D$ contains at most one $r_i$ as a subword of its boundary relation. \Cref{clm:longest-bdry-arc} implies that every boundary arc of $D$ containing an $r_i$ as a subword has length less than $M_2(a, k, s)$. Hence if $R$ is a region of $D$ and $i$ is an integer, the total length of the boundary arcs of $R$ which contain a subword $r_i$ is less than $M_2(a, k, s)$. By assumption, $D$ has at most $s-1$ regions, so the total length of the boundary arcs of $D$ containing an $r_i$ as a subword is at most $(s-1)M_2(a, k, s).$  The remaining boundary arcs of $D$ do not contain a subword $r_i$ for any $i$. Since $D$ has at most $2(s-2)$ boundary arcs, assuming each of the $s-1$ regions of $D$ contains a boundary arc with an $r_i$ subword, there are at most $(s-3)$ boundary arcs of $D$ with no $r_i$ subword. Again, it gives us a higher upper bound to assume that each region has a boundary arc with an $r_i$ subword as $m_2(a, s)<M_2(a, k, s)$. Thus, the sum of the lengths of boundary arcs of $D$ is less than $(s-1)M_2(a, k, s)+(s-3)m_2(a, s)$. This completes the proof of \Cref{clm:upper-bound-on-boundary-D}.
    \end{proof}  
This concludes the proof of \Cref{prop: seplength-w} for $n=3$ and $n=2.$  We now prove \Cref{prop: dsepab} for $n=3$ and $n=2$.

\dsepab*

\begin{proof}
    \noindent\textbf{Case 2: $\mathbf{n=3}$.} The structure of the proof is the same as it was for $n>3$. Let $b=a+s$. We assume toward a contradiction that $A$ is an annular diagram of area at most $2s-1$ with boundaries $w(a, k, s, 3)$ and $w(b, k, s, 3)$. Let $O_3(a, s)$ be the length of the longest subword shared by $w(a, k, s, 3)$ and $w(b, k, s, 3)$. Recall from Case 1 that $|w(a, k, s, 3)|+|w(b, k, s, 3)|-(2s-1)O_3(a, s)$ is a lower bound for the length of the boundary of $A$, and an upper bound for the length of the boundary of $A$ is given by $(2s-1)M_3(b, k, s)+(2s-1)m_3(b, s)+(2s-1)O_3(a, s)$. The contradiction we find is that there are $a$ and $k$ large enough so that 
    \begin{inequality}\label{inequality:contradiction-annular-n-3}
        |w(a, k, s, 3)|+|w(b, k, s, 3)|-2(2s-1)O_3(a, s)>(2s-1)M_3(b, k, s) +(2s-1)m_3(b, s).
    \end{inequality} A computation shows that
    \begin{equation*}
        \begin{split}
            |w(a, k, s, 3)|+|w(b, k, s, 3)|&=k(14s+4as+2s^2)+k(14s+4s(a+s)+2s^2)\\
            &=k(28s+8as+8s^2).
        \end{split}
    \end{equation*}
    We want to compute the length of the longest subword shared by 
    \begin{equation*}
        w(a, k, s, 3)=\Big((x_1^2x_2^ax_3)^2x_2^2\Big)^k\Big((x_1^2x_2^{a+1}x_3)^2x_2^2\Big)^k\ldots\Big((x_1^2x_2^{a+s-1}x_3)^2x_2^2\Big)^k
    \end{equation*} and 
    \begin{equation*}
        w(b, k, s, 3)=\Big((x_1^2x_2^{a+s}x_3)^2x_2^2\Big)^{2k}\Big((x_1^2x_2^{a+s+1}x_3)^2x_2^2\Big)^{2k}\ldots\Big((x_1^2x_2^{a+2s-1}x_3)^2x_2^2\Big)^{2k}.
    \end{equation*}
    By inspection, the longest subword of both $w(a, k, s, 3)$ and $w(b, k, s, 3)$ is $x_2^{a+s-1}x_3x_2^2x_1^2x_2^{a+s-1},$ which has length
    \begin{equation*}
        |x_2^{a+s-1}x_3x_2^2x_1^2x_2^{a+s-1}|=2a+2s+3.
    \end{equation*} Thus $O_3(a, s)=2a+2s+3$. Thus a lower bound for the length of the boundary of $A$ is 
    \begin{equation*}
        |w(a, k, s, n)|+|w(b, k, s, n)|-(2s-1)O_3(a, s)= k(28s+8as+8s^2)-(2s-1)(2a+2s+3).
    \end{equation*}
We now compute an upper bound for the length of the boundary of $A$. By the argument in Case 1, an upper bound for the length of the boundary of $A$ is given by \[(2s-1)M_3(b, k, s)+(2s-1)m_3(b, s)+(2s-1)O_3(a, s).\]  In \Cref{clm:longest-bdry-arc} and \Cref{clm:upper-bound-on-boundary-D}, respectively, we computed 
\begin{equation*}
    M_3(b, k, s)=(k+2)(12+4b+4s)
\end{equation*} and 
\begin{equation*}
    m_3(b, s)=18+6b+6s.
\end{equation*} Thus an upper bound for the length of the boundary of $A$ is 
\begin{equation*}
    (2s-1)(k+2)(12+4a+8s)+(2s-1)(18+6a+12s)+(2s-1)(2a+2s+3).
\end{equation*} To obtain a contradiction, we show that \Cref{inequality:contradiction-annular-n-3} holds for sufficiently large $a$ and $k$. \Cref{inequality:contradiction-annular-n-3} is equivalent to 
\begin{inequality}\label{inequality:constants-contradiction-annular-n-3}
    \begin{split}
        k(28s+8as+8s^2)-&2(2s-1)(2a+2s+3)\\&>(2s-1)(k+2)(12+4a+8s)+(2s-1)(18+6a+12s),
    \end{split}
\end{inequality} which simplifies to
\begin{inequality}\label{inequality:constants-contradiction-annular-n-3-last-step}
    k(12s-8s^2+12+4a)>(2s-1)(48+18a+32s).
\end{inequality} When $a>2s^2$, $12s-8s^2+12+4a$ is positive.  So, letting $a>2s^2$ and 
\begin{equation*}
    k>\frac{(2s-1)(48+18a+32s)}{12s-8s^2+12+4a}
\end{equation*} gives us our contradiction. We conclude $\dsep{[w(a, k, s, 3)]}{[w(b, k, s, 3)]}=2s$ for sufficiently high $a$ and $k.$

\medskip

\noindent\textbf{Case 3: $\mathbf{n=2}$.} The proof is the same as before. Let $b=a+2s$ and let $A$ be an annular diagram of area at most $2s-1$ with boundaries labeled by $w(a, k, s, 2)$ and $w(b, k, s, 2).$ It suffices to show 
\begin{inequality}\label{inequality:contradiction-annular-n-2}
    |w(a, k, s, 2)|+|w(b, k, s, 2)|-2(2s-1)O_2(a, s)>(2s-1)M_2(b, k, s)+(2s-1)m_2(b, s).
\end{inequality} Recall from \Cref{clm:longest-bdry-arc} and \Cref{clm:upper-bound-on-boundary-D} that $M_2(b, k, s)=(k+2)(2b+4s)$ and $m_2(b, s)=3b+6s.$ \Cref{prop: seplength-w} shows that $|w(a, k, s, 2)|=k(2as+2s^2)$, so we compute
\begin{equation*}
    |w(a, k, s, 2)|+|w(b, k, s, 2)|=k(4as+8s^2).
\end{equation*}
We want to compute $O_2(a, s)$. Recall that
\begin{equation*}
    w(a, k, s, 2)=(x_1^ax_2)^{2k}(x_1^{a+2}x_2)^{2k}(x_1^{a+4}x_2)^{2k}\ldots (x_1^{a+2s-2}x_2)^{2k}
\end{equation*} and 
\begin{equation*}
    w(b, k, s, 2)=(x_1^{a+2s}x_2)^{2k}(x_1^{a+2s+2}x_2)^{2k}(x_1^{a+2s+4}x_2)^{2k}\ldots (x_1^{a+4s-2}x_2)^{2k}.
\end{equation*} By inspection, the longest shared subword of $w(a, k, s, 2)$ and $w(b, k, s, 2)$ is $x_1^{a+2s-2}x_2x_1^{a+2s-2},$ which has length $O_2(a, s)=2a+4s-3.$ \Cref{inequality:contradiction-annular-n-2} is equivalent to 
\begin{inequality}\label{inequality:constants-contradiction-annular-n-2}
    k(4as+8s^2)-2(2s-1)(2a+4s-3)>(2s-1)(k+2)(2a+8s)+(2s-1)(3a+12s).
\end{inequality} After some simplification of \Cref{inequality:constants-contradiction-annular-n-2}, we obtain
\begin{inequality}\label{inequality:contradiction-annular-last-step-n-2}
    k(2a+8s-8s^2)>(2s-1)(11a+36s-6).
\end{inequality} For $a>4s^2$, the left side of \Cref{inequality:contradiction-annular-last-step-n-2} is positive. Thus, taking $a>4s^2$ and 
\begin{inequality}
    k>\displaystyle\frac{(2s-1)(11a+36s-6)}{2a+8s-8s^2}
\end{inequality} gives us the desired contradiction. We conclude there are $a$ and $k$ large enough so that $\dsep{[w(a, k, s, 2)]}{[w(b, k, s, 2)]}=2s.$ 
\end{proof}

\bibliographystyle{alpha}
\bibliography{main}

\end{document}